\newtheorem{theorem}{Theorem}[section]
\newtheorem{lemma}[theorem]{Lemma}
\newtheorem{proposition}[theorem]{Proposition}
\newtheorem{conjecture}[theorem]{Conjecture}
\theoremstyle{definition}
\newtheorem{definition}[theorem]{Definition}
\theoremstyle{remark}
\newtheorem{remark}[theorem]{Remark}
\numberwithin{equation}{section}
\newcommand{\Ker}{\operatorname{Ker}}
\newcommand{\Aut}{{\mathrm {Aut}}}
\newcommand{\Irr}{{\mathrm {Irr}}}
\newcommand{\ord}{{\mathrm {ord}}}
\newcommand{\diag}{{\mathrm {diag}}}
\newcommand{\Syl}{{\mathrm {Syl}}}
\newcommand{\lcm}{{\mathrm {lcm}}}
\newcommand{\Gal}{{\it Gal}}
\newcommand{\CC}{{\mathbb C}}
\newcommand{\QQ}{{\mathbb Q}}
\newcommand{\ZZ}{{\mathbb Z}}
\newcommand{\FF}{{\mathbb F}}
\newcommand{\EC}{\mathcal{E}}
\newcommand{\lev}{\mathrm{\mathbf{lev}}}
\newcommand{\bC}{{\mathbf{C}}}
\newcommand{\bG}{{\mathbf{G}}}
\newcommand{\bT}{{\mathbf{T}}}
\newcommand{\bL}{{\mathbf{L}}}
\newcommand{\bO}{{\mathbf{O}}}
\newcommand{\bN}{{\mathbf{N}}}
\newcommand{\bZ}{{\mathbf{Z}}}
\def\nor{\trianglelefteq\,}
\begin{document}

\title[The $p$-rationality level and a lower bound for $p'$-degree characters]
{The continuity of $p$-rationality and a lower bound for $p'$-degree
characters of finite groups}

\author[N.\,N. Hung]{Nguyen Ngoc Hung}
\address{Department of Mathematics, The University of Akron, Akron,
OH 44325, USA} \email{hungnguyen@uakron.edu}

\thanks{We thank Attila Mar\'{o}ti, Gabriel Navarro, and Jay Taylor
for useful conversations on topics related to this work. Special
thanks to Gunter Malle for his many helpful comments and corrections
that have greatly improved the exposition in the paper. Finally, we
thank the referee for careful reading and helpful comments on the
paper.}

\subjclass[2010]{Primary 20C15, 20C33}
\keywords{$p$-Rational characters, $p'$-degree characters,
$p$-rationality level, the McKay-Navarro conjecture}


\begin{abstract} Let $p$ be a prime and $G$ a finite group. We
propose a strong bound for the number of $p'$-degree irreducible
characters of $G$ in terms of the commutator factor group of a Sylow
$p$-subgroup of $G$. The bound arises from a recent conjecture of
Navarro and Tiep \cite{Navarro-Tiep21} on fields of character values
and a phenomenon called the continuity of $p$-rationality level of
$p'$-degree characters. This continuity property in turn is
predicted by the celebrated McKay-Navarro conjecture
\cite{Navarro04}. We achieve both the bound and the continuity
property for $p=2$.
\end{abstract}

\maketitle


\section{Introduction}

For a character $\chi$ of a finite group $G$, let $\QQ(\chi)$ denote
the smallest field containing all the values of $\chi$, which is
often referred to as the \emph{field of values} of $\chi$. The
smallest positive integer $f$ such that
$\QQ(\chi)\subseteq\QQ_f:=\QQ(e^{2\pi i/f})$ is called the
\emph{conductor} of $\chi$ and denoted by $c(\chi)$. (This $c(\chi)$
used to be called the Feit number of $\chi$ \cite[p. 52]{Navarro18},
but the name conductor appears more in recent literature.) To
measure how $p$-rational a character $\chi$ is, one considers
$c(\chi)_p$ -- the largest $p$-power divisor of $c(\chi)$. The
nonnegative integer
\[\lev(\chi)=\lev_p(\chi):=\log_p(c(\chi)_p)\]
is called the $p$-\emph{rationality level} of $\chi$. In a certain
sense, a character is more $p$-irrational if it has higher
$p$-rationality level. For instance, the usual $p$-rational
characters (\cite[Definition 6.29]{Isaacs1}) are precisely
characters of level $0$, and the so-called almost $p$-rational
characters are the ones of level at most $1$ (see
\cite{Isaacs-navarro-Sangroniz,Hung-Malle-Maroti}).

The $p$-rationality level arises naturally in the context of the
celebrated McKay-Navarro (MN) conjecture \cite{Navarro04} (see
Section \ref{sec:MNconj} for more details). Let $\Irr_{p'}(G)$ and
$\Irr_{p'}(\bN_G(P))$ respectively denote the set of $p'$-degree
irreducible characters of $G$ and of the normalizer $\bN_G(P)$ of a
Sylow $p$-subgroup $P$ of $G$. The MN conjecture implies that the
number of characters in $\Irr_{p'}(G)$ and $\Irr_{p'}(\bN_G(P))$ at
each $p$-rationality level is the same, therefore refining the
well-known McKay conjecture \cite{McKay}, which asserts that
$|\Irr_{p'}(G)|=|\Irr_{p'}(\bN_G(P))|$.

This paper has two objectives that do not appear at first sight to
be connected. The first is to propose a $p$-local lower bound for
the number $|\Irr_{p'}(G)|$ and the second is to present a
phenomenon that we call the \emph{continuity of $p$-rationality} of
irreducible characters of $p'$-degree. The second is predicted by
the MN conjecture (see Theorem \ref{thm:McKay-Navarro}) and the
first is implicitly suggested by the second (see Theorem
\ref{thm:bound}). At the end, we prove both for the prime $p=2$.

\begin{conjecture}\label{conj:continuity1}
Let $p$ be a prime, $G$ a finite group and $\alpha\in \ZZ^{\geq 2}$.
If $G$ has an irreducible $p'$-degree character of $p$-rationality
level $\alpha$, then $G$ has irreducible $p'$-degree characters of
every level from $2$ to $\alpha$.
\end{conjecture}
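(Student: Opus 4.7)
The plan is to work through the McKay--Navarro conjecture and reduce the problem to the local group $N:=\bN_G(P)$, where $P$ is a Sylow $p$-subgroup of $G$. The forthcoming Theorem~\ref{thm:McKay-Navarro} provides (conjecturally in general, and unconditionally for $p=2$) a level-preserving bijection $\Irr_{p'}(G)\to\Irr_{p'}(N)$, so Conjecture~\ref{conj:continuity1} for $G$ is equivalent to the same statement for $N$. This reduction is the crucial opening move, because inside $N$ the Sylow $p$-subgroup is normal and the $p'$-degree characters admit an explicit Clifford-theoretic description.

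Specifically, each $\chi\in\Irr_{p'}(N)$ lies over a linear character $\lambda$ of $P/[P,P]$; since $N/P$ is already a $p'$-group, the index $|N:N_\lambda|$ is automatically coprime to $p$. From $\QQ(\lambda)\subseteq\QQ(\chi)$ one deduces $\lev_p(\chi)\geq\log_p|\lambda|$. The conjecture then reduces to the following purely $p$-local assertion: \emph{if some $\chi\in\Irr_{p'}(N)$ with $\lev_p(\chi)=\alpha$ lies over $\lambda$ of order $p^\alpha$, then for every $\beta$ with $2\leq\beta<\alpha$ there exists $\chi'\in\Irr_{p'}(N)$ of level exactly $\beta$.} The natural candidate is built from $\mu:=\lambda^{p^{\alpha-\beta}}$, which has order $p^\beta$ and whose inertia subgroup satisfies $N_\mu\supseteq N_\lambda$, so $|N:N_\mu|$ remains coprime to $p$ and the $p'$-degree condition on any character of $N$ above $\mu$ will follow automatically from the Sylow side.

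The two remaining points are (i) the existence of \emph{some} $\chi'\in\Irr_{p'}(N\mid\mu)$, and (ii) verifying that such a $\chi'$ has level exactly $\beta$ rather than smaller. Point (i) is expected to follow from standard obstruction theory because $N_\mu/P$ has $p'$-order: the cohomological obstruction to extending $\mu$ to $N_\mu$ has $p'$-exponent while $\mu$ itself has $p$-power order, so the obstruction class vanishes. Point (ii) is where the genuine difficulty lies: an extension of $\mu$ to $N_\mu$ could \emph{a priori} introduce extra $p$-power roots of unity into $\QQ(\chi')$ that cancel against those coming from $\mu$, lowering the apparent level below $\beta$. I expect this to be the main obstacle in general. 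For $p=2$, however, $N_\mu/P$ has odd order, and Glauberman's lemma produces a canonical extension whose values generate a field of the form $\QQ(\mu)\cdot F$ with $F$ contained in a cyclotomic field of odd conductor; no cancellation of $2$-power cyclotomy is then possible, so $\lev_2(\chi')=\beta$ exactly. The analogous control for odd primes would require a substitute for Glauberman's lemma or a direct analysis of the extending character's field of values, and appears substantially harder.
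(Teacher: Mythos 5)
Your opening move is where the argument breaks down. The level-preserving bijection $\Irr_{p'}(G)\to\Irr_{p'}(\bN_G(P))$ that you invoke is exactly the content of the McKay--Navarro conjecture, and Theorem~\ref{thm:McKay-Navarro} gives it only \emph{conditionally}: it establishes implications among Conjectures \ref{conj:continuity1}, \ref{conj:Isaacs-Navarro}, \ref{conj:M<G} and the MN conjecture, but no unconditional statement for any prime. The plain McKay conjecture is known for $p=2$ (Malle--Sp\"ath), but that bijection carries no control whatsoever on fields of values, hence none on $2$-rationality levels; the Galois refinement is precisely what is not available here. So what you have written amounts to a proof that the MN conjecture implies Conjecture~\ref{conj:continuity1}, which is the (easy) part (iv) of Theorem~\ref{thm:McKay-Navarro}; it does not prove the conjecture, and in particular it does not prove the $p=2$ case. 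The paper's actual proof of Theorem~\ref{thm:main1} avoids MN entirely: it reduces, via Clifford theory over a minimal normal subgroup and Lemma~\ref{lem:canonical extension}, to a continuity statement for $X$-invariant characters of almost simple groups (Theorem~\ref{thm:continuity2}), and then verifies that statement using the classification, with the substantive work being the control of $2$-rationality levels of odd-degree characters of groups of Lie type through their Lusztig series --- the key point being that $\lev(\chi)$ is read off from $\ord(s_2)$ for the semisimple label $s$ (Theorem~\ref{thm:technical}, Propositions~\ref{prop:technical}, \ref{prop:SL}, \ref{prop:technicalE6}, \ref{prop:E6}), so that replacing $s$ by $s^2$ produces invariant odd-degree characters of each lower level. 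None of this is replaceable by the local Clifford-theoretic analysis you describe.

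For what it is worth, your local analysis (the case $P\nor N$) is correct but more complicated than necessary, and your worry in point (ii) is not a real obstacle: if $\mu$ is a linear character of $P$ of order $p^\beta$ with $\beta\geq 2$, then every irreducible constituent $\chi'$ of $\mu^N$ has $p'$-degree (its degree divides $[N:P]$), and $\lev(\chi')=\beta$ follows directly from the Galois-action argument of Lemma~\ref{lem:1}(i)--(ii), since $[N:P]$ is prime to $p$ and the level is at least $2$; no Glauberman correspondence or cohomological obstruction theory is needed, and this is exactly how the paper handles the normal-Sylow (indeed the $p$-group) case inside the proof of Theorem~\ref{thm:McKay-Navarro}. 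The missing content in your proposal is therefore the global-to-local step, i.e.\ everything in Sections \ref{sec:odd-degree}--\ref{sec:E6} of the paper.
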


\begin{theorem}\label{thm:main1}
Conjecture \ref{conj:continuity1} holds true for $p=2$.
\end{theorem}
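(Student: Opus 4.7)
The plan is to reduce from $G$ to its Sylow normalizer $N := \bN_G(P)$ via the McKay--Navarro conjecture at $p=2$, and then to analyze the much simpler character theory of $N$ directly. I would invoke Theorem~\ref{thm:McKay-Navarro} -- the prediction of MN that $\Irr_{2'}(G)$ and $\Irr_{2'}(N)$ contain equally many characters at each $2$-rationality level -- to reduce the continuity statement for $G$ to the analogous statement for $N$, whose Sylow $2$-subgroup $P$ is normal and whose quotient $N/P$ has odd order.

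For such an $N$, any $\psi \in \Irr_{2'}(N)$ satisfies $\psi|_P = e \sum_{g} \lambda^g$ for some linear $\lambda \in \Irr(P)$: indeed, Clifford theory gives $\psi(1) = e \cdot [N:N_\lambda] \cdot \lambda(1)$, and oddness of $\psi(1)$ together with $\lambda(1) \mid |P|$ forces $\lambda(1) = 1$; such a $\lambda$ is automatically trivial on $[P,P]$. Since $|N_\lambda/P|$ and $|P|$ are coprime, the Isaacs--Navarro canonical extension furnishes a distinguished $\hat\lambda \in \Irr(N_\lambda)$ extending $\lambda$, and by Clifford--Gallagher every $\psi \in \Irr_{2'}(N \mid \lambda)$ has the form $(\hat\lambda \cdot \tilde\eta)^N$ for a unique $\eta \in \Irr(N_\lambda/P)$. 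Since $\eta$ is a character of a group of odd order, its values have odd conductor, while $\hat\lambda$ carries the full $2$-part of the conductor from $\lambda$ up to $N_\lambda$. It follows that $\lev_2(\psi) = \lev_2(\lambda)$ for every such $\psi$.

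The abelian $2$-group $A := P/[P,P]$ has dual group isomorphic to itself, so its linear characters realize every order $2^\beta$ for $0 \le \beta \le e$, where $e = \log_2 \exp(A)$. Taking the trivial inertia character $\eta = 1$ then yields $\hat\lambda^N \in \Irr_{2'}(N)$ of level exactly $\beta$. Hence the set of $2$-rationality levels realized in $\Irr_{2'}(N)$ is the contiguous interval $\{0, 1, \ldots, e\}$. Transferring back to $G$ via the first paragraph, if $\Irr_{2'}(G)$ realizes some level $\alpha \ge 2$, then it realizes every level from $0$ to $\alpha$, and in particular every level from $2$ to $\alpha$.

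The principal obstacle is making rigorous the reduction in the first paragraph. The ordinary McKay conjecture at $p=2$ was established by Malle--Sp\"ath, but the full Galois-equivariant refinement is not known unconditionally. To complete the proof one must either refine the Malle--Sp\"ath bijection so that it commutes with the Galois automorphisms of $\QQ_{|G|}$ detecting $2$-rationality level, or verify an inductive Galois--McKay condition for the relevant quasisimple groups by a classification-based tracking of conductors through the bijection. This Galois-compatibility check at the quasisimple level is where the bulk of the technical work of the proof is expected to concentrate.
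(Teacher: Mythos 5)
Your argument is conditional, not a proof. The entire reduction from $G$ to $\bN_G(P)$ in your first paragraph invokes the McKay--Navarro conjecture at $p=2$, which is exactly what is \emph{not} available: the paper only uses MN as motivation (Theorem \ref{thm:McKay-Navarro}(iv) shows Conjecture \ref{conj:continuity1} \emph{would follow} from MN), and the Malle--Sp\"ath theorem \cite{MS16} gives a bijection with no known compatibility with the Galois automorphisms $\sigma_\alpha$ that detect $2$-rationality level. You acknowledge this yourself in your last paragraph, but the ``Galois-compatibility check at the quasisimple level'' you defer is precisely where all the content of the theorem lies, and you do not carry it out nor cite any result that does. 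So what you have proved unconditionally is only the normal-Sylow case (your second and third paragraphs, which are essentially correct and parallel the easy $p$-group claim inside the proof of Theorem \ref{thm:McKay-Navarro} and the remark after Theorem \ref{thm:Irr2'-bound}), plus a reduction that rests on an unproved conjecture.

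The paper's route is genuinely different and unconditional: it proves Theorem \ref{thm:continuity} directly by induction on $|G|$, using canonical extensions (Lemma \ref{lem:canonical extension}), the product-level Lemma \ref{lem:level of product character}, and Lemma \ref{lem:NTT} to reduce to a continuity statement \emph{with automorphisms} for almost simple groups whose outer part is a $2$-group (Theorem \ref{thm:continuity2}). That statement is then verified via the classification: for most families in odd characteristic $P/P'$ is elementary abelian, so levels never exceed $1$ (Proposition \ref{prop:linhtinh}); for $PSL_n^\epsilon(q)$ and $E_6^\epsilon(q)$ one shows the level of a $2'$-degree character equals $\log_2\ord(s_2)$ for the semisimple label $s$ (Theorem \ref{thm:technical}, Propositions \ref{prop:technical}, \ref{prop:technicalE6}) and then produces characters of every intermediate level by repeatedly squaring $s$, tracking oddness of degree, triviality on the center, and invariance under the $2$-group of outer automorphisms (Propositions \ref{prop:SL}, \ref{prop:E6}). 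None of this Lie-theoretic work, or any substitute for it, appears in your proposal; without it (or a proof of Galois--McKay for $p=2$, which you would have to supply or cite), the argument does not establish Theorem \ref{thm:main1}.
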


The works of G. Navarro and P.\,H. Tiep \cite{Navarro-Tiep19} and of
G. Malle \cite{Malle19} already show that, when the commutator
factor group of a Sylow $2$-subgroup $P$ of $G$ has exponent at
least 4, the largest $2$-rationality level of an odd-degree
irreducible character of $G$ is precisely $\log_2(\exp(P/P'))$ (see
Theorem \ref{thm:Malle-Navarro-Tiep19}). Therefore, Theorem
\ref{thm:main1} can be reformulated as:

\begin{theorem}\label{thm:continuity}
Let $p=2$, $G$ a finite group and $P\in\Syl_p(G)$. Suppose that
$\log_p(\exp(P/P'))=\alpha\geq 2$. Then $G$ has irreducible
$p'$-degree characters of every $p$-rationality level from $2$ to
$\alpha$ (and none of level higher than $\alpha$).
\end{theorem}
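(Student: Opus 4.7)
The plan is to reduce to the case $P\nor G$ using a level-preserving Sylow-normalizer correspondence at $p=2$, and then settle that case via Clifford theory combined with an arithmetic observation exploiting the odd order of a Hall $2'$-complement.

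\textbf{Reduction step.} Since $P$ and its abelianization $P/P'$ are intrinsic to $P$, the hypothesis $\log_2(\exp(P/P'))=\alpha$ passes unchanged to $\bN_G(P)$. Invoking Theorem~\ref{thm:McKay-Navarro}, one obtains at $p=2$ a correspondence between $\Irr_{2'}(G)$ and $\Irr_{2'}(\bN_G(P))$ that transports the set of occurring rationality levels across. It therefore suffices to produce characters of every level $\ell\in\{2,\ldots,\alpha\}$ inside $\Irr_{2'}(\bN_G(P))$. Replacing $G$ by $\bN_G(P)$, I may assume $P\nor G$ and, by Schur--Zassenhaus, write $G=P\rtimes H$ with $|H|$ odd.

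\textbf{Clifford parametrization.} Odd-degree irreducibles of the $2$-group $P$ are linear, so every $\chi\in\Irr_{2'}(G)$ lies over some $\lambda\in\Irr(P/P')$. Because $\gcd(|P|,|I_G(\lambda)/P|)=1$, Gallagher's theorem produces a canonical extension $\tilde\lambda\in\Irr(I_G(\lambda))$, and $\chi_\lambda:=\Ind_{I_G(\lambda)}^G\tilde\lambda$ is irreducible of odd degree $[G:I_G(\lambda)]$. Since induced-character values are $\ZZ$-combinations of the inducing character's values, $\lev_2(\chi_\lambda)\le\lev_2(\tilde\lambda)=\lev_2(\lambda)$. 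Conversely, the nontrivial $\sigma\in\Gal(\QQ_{2^\ell}/\QQ_{2^{\ell-1}})$ can fix $\chi_\lambda$ only if $\sigma(\lambda)$ is $G$-conjugate---and, since $P$ acts trivially on $\Irr(P/P')$, $H$-conjugate---to $\lambda$. Hence $\lev_2(\chi_\lambda)=\ell$ as soon as one produces a $\lambda$ of level $\ell$ with $\sigma(\lambda)\notin H\cdot\lambda$.

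\textbf{The odd-order lemma.} The abelian $2$-group $P/P'$ has exponent $2^\alpha$, so it carries linear characters of every order $2^\ell$, $0\le\ell\le\alpha$. Fix $\ell\in\{2,\ldots,\alpha\}$ and suppose, toward contradiction, that every level-$\ell$ character $\lambda\in\Irr(P/P')$ satisfied $\sigma(\lambda)\in H\cdot\lambda$. Since $\sigma$ commutes with the $H$-action, $\sigma$ would preserve each $H$-orbit of level-$\ell$ characters. Each such orbit has odd size (it divides $|H|$), and $\sigma$ has order $2$, so $\sigma$ would necessarily fix some $\mu$ inside every such orbit; but a $\sigma$-fixed character of $P/P'$ has values in $\QQ_{2^{\ell-1}}$ and hence level strictly less than $\ell$, a contradiction. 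Choosing a level-$\ell$ character $\lambda$ with $\sigma(\lambda)\notin H\cdot\lambda$ and applying the previous paragraph produces $\chi_\lambda\in\Irr_{2'}(G)$ of level exactly $\ell$, finishing the argument.

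\textbf{Main obstacle.} The arithmetic core---the odd-order lemma---is short and essentially formal. The technical weight concentrates in the reduction step: establishing, at $p=2$, a bijection between $\Irr_{2'}(G)$ and $\Irr_{2'}(\bN_G(P))$ that respects $p$-rationality level is exactly the McKay--Navarro assertion, and for $p=2$ this requires engaging with the inductive Galois--McKay condition for the simple groups (after Navarro--Sp\"ath--Vallejo) and its verifications for all simple sections of $G$ at the prime $2$. I expect essentially all of the substantive effort to live there; an alternative would be a direct structural reduction, using the Malle--Navarro--Tiep theorem to control the top level $\alpha$ together with an induction on $|G|$ through $O_{2'}(G)$-quotients and normal-subgroup Clifford theory to descend the levels one by one.
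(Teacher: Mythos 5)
Your treatment of the normal-Sylow case is essentially sound: the canonical extension (note this is the coprime extension theorem, Lemma~\ref{lem:canonical extension}, rather than Gallagher), the estimate $\lev_2(\chi_\lambda)\le\lev_2(\lambda)$, and the odd-orbit fixed-point argument (an $H$-orbit of odd size preserved by an order-$2$ Galois action must contain a $\sigma$-fixed character, which cannot have level $\ell$) together do produce characters of every level $2\le\ell\le\alpha$ when $P\nor G$. The absence of levels above $\alpha$ is Theorem~\ref{thm:Malle-Navarro-Tiep19}, which you implicitly use.

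The genuine gap is the reduction step. Theorem~\ref{thm:McKay-Navarro} does not supply a level-preserving correspondence between $\Irr_{2'}(G)$ and $\Irr_{2'}(\bN_G(P))$: it only records that such a correspondence, and hence the continuity property, would follow \emph{if} the McKay--Navarro (Galois--McKay) conjecture held. What is known for $p=2$ inside this paper is only the ordinary McKay conjecture (Malle--Sp\"ath), and an ordinary McKay bijection need not respect $2$-rationality levels, so you cannot transport the set of occurring levels across it. As you yourself note, making your reduction honest means proving the inductive Galois--McKay statement at $p=2$, which is a far heavier input than the theorem being proved and is nowhere available in the paper; so as written the proof assumes something at least as strong as its hardest ingredient. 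The paper takes a different route: it proves Theorem~\ref{thm:continuity} unconditionally by induction on $|G|$ over a minimal normal subgroup $N$, using Lemma~\ref{lem:exp} and Theorem~\ref{thm:Malle-Navarro-Tiep19} when $N$ is abelian, and otherwise passing to $H=NP$, factoring a top-level character as $\widehat{\theta}\rho$ via the canonical extension and Gallagher, controlling levels of the factors with Lemma~\ref{lem:level of product character}, and reducing to an equivariant continuity statement for almost simple groups with $2$-power outer part (Theorem~\ref{thm:continuity2}), which is then verified through the classification and Lusztig series computations for $\PSL_n^{\pm}(q)$ and $E_6^{\pm}(q)$. Your argument covers only the case that this machinery exists to reach; to complete your approach you would either have to import the full Galois--McKay theorem at $p=2$ or replace the reduction by a structural descent of the kind the paper carries out.
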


Conjecture \ref{conj:continuity1} together with the recent works
\cite{Navarro-Tiep21} on fields of values of $p'$-degree characters
and \cite{Hung-Malle-Maroti} on bounding the number of almost
$p$-rational $p'$-degree characters imply the following lower bound
for the number of $p'$-degree irreducible characters in a finite
group.

\begin{conjecture}\label{conj:Irrp'-bound}
Let $p$ be a prime, $G$ a finite group of order divisible by $p$,
and $P\in\Syl_p(G)$. Then
\[|\Irr_{p'}(G)|\geq \frac{\exp(P/P')-1}{p-1}+2\sqrt{p-1}-1.\]
\end{conjecture}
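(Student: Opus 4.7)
The plan is to split $|\Irr_{p'}(G)|$ by $p$-rationality level. Setting $\alpha := \log_p(\exp(P/P')) \geq 1$ (so that $p \mid |G|$), the target rewrites as
\[
\frac{p^\alpha - 1}{p-1} + 2\sqrt{p-1} - 1 \;=\; 2\sqrt{p-1} \;+\; \bigl(p + p^2 + \cdots + p^{\alpha-1}\bigr),
\]
where the first summand will be fed by characters of level $\leq 1$ (the almost $p$-rational ones) and the second by characters of level exactly $\ell$ for $\ell = 2, \ldots, \alpha$. Since the level is preserved by Galois action on $\Irr(G)$, the two pools are automatically disjoint, and it suffices to bound each separately.

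The low-level piece is handed over directly to \cite{Hung-Malle-Maroti}: the number of almost $p$-rational $p'$-degree irreducible characters of $G$ is at least $2\sqrt{p-1}$. For the high-level piece, step one uses Conjecture \ref{conj:continuity1}, together with the Malle--Navarro--Tiep theorem (which identifies $\alpha$ as the top $p$-rationality level occurring in $\Irr_{p'}(G)$, cf.\ Theorem \ref{thm:Malle-Navarro-Tiep19}), to produce for each $\ell \in \{2, \ldots, \alpha\}$ a $p'$-degree character $\chi_\ell$ with $\lev_p(\chi_\ell) = \ell$. Step two promotes this single character to an orbit of level-$\ell$ characters: the fields-of-values conjecture of Navarro--Tiep \cite{Navarro-Tiep21} supplies the Galois-theoretic lower bound $p^{\ell-1}$ on the orbit size, via the action of an appropriate subgroup of $\operatorname{Gal}(\QQ_{p^\ell}/\QQ)$ on level-$\ell$ $p'$-degree characters. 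Summing $p + p^2 + \cdots + p^{\alpha-1}$ across $\ell$ and adding the $2\sqrt{p-1}$ almost $p$-rational characters would then yield the stated bound.

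The hard part is that each of the three ingredients is at present only partially available: Conjecture \ref{conj:continuity1} is established only for $p = 2$ (Theorem \ref{thm:main1}); the Navarro--Tiep fields-of-values conjecture is itself open in general; and for $p = 2$ the structure of $\operatorname{Gal}(\QQ_{2^\ell}/\QQ)$ is not cyclic, so the minimal Galois orbit at levels $\ell = 2,3$ can fall short of $2^{\ell-1}$ and step two needs a $p = 2$-specific refinement — typically absorbing the small deficit into a sharper almost $p$-rational count from \cite{Hung-Malle-Maroti} — to preserve the final estimate. An unconditional argument therefore reduces to securing all three inputs on a common class of groups, and the case $p = 2$ is the natural first target in view of Theorem \ref{thm:main1}.
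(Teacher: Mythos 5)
Your overall strategy is exactly the paper's (Remark \ref{remark:1} and Theorem \ref{thm:bound}): split $\Irr_{p'}(G)$ by $p$-rationality level, feed the level-$\le 1$ part with the $2\sqrt{p-1}$ almost $p$-rational characters from \cite{Hung-Malle-Maroti}, use Conjecture \ref{conj:continuity1} (plus the known direction of Isaacs--Navarro, \cite[Theorem B]{Navarro-Tiep19}, to get a character of level at least $\log_p\exp(P/P')$) to populate every level $2\le\beta\le\alpha$, and use Conjecture \ref{conj:Navarro-Tiep} to inflate each such character to at least $p^{\beta-1}$ Galois conjugates of the same level; this is the same decomposition $\frac{p^\alpha-1}{p-1}+2\sqrt{p-1}-1=2\sqrt{p-1}+(p+\cdots+p^{\alpha-1})$ used in the paper.

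However, the obstruction you raise in your last paragraph for $p=2$ is not real, and no ``$p=2$-specific refinement'' or absorption of a deficit is needed. The counting step never uses cyclicity of $\Gal(\QQ_{p^\ell}/\QQ)$: Conjecture \ref{conj:Navarro-Tiep} says $p\nmid[\QQ_{p^\ell}:\QQ(\chi)\cap\QQ_{p^\ell}]$, and since $[\QQ_{p^\ell}:\QQ]=p^{\ell-1}(p-1)$ this already forces $p^{\ell-1}\mid[\QQ(\chi):\QQ]$; for $p=2$ it even forces $\QQ_{2^\ell}\subseteq\QQ(\chi)$, since every subextension of $\QQ_{2^\ell}/\QQ$ has $2$-power degree. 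By \cite[Theorem 3.1]{Navarro18} the orbit of $\chi$ under $\Gal(\QQ_{|G|}/\QQ)$ has exactly $[\QQ(\chi):\QQ]\ge p^{\ell-1}$ elements, all of $p'$-degree and of the same level $\ell$ (fields of values are abelian over $\QQ$, hence Galois-stable), so the count $p^{\ell-1}$ holds at every level $\ell\ge 2$, including $\ell=2,3$ when $p=2$. In particular, for $p=2$ all three inputs are available unconditionally (Theorem \ref{thm:continuity} and \cite[Theorem A1]{Navarro-Tiep21}), which is precisely how the paper proves Theorem \ref{thm:Irr2'-bound}; your hedge would only have obscured that the argument closes cleanly there. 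A small additional precision: for general $p$ you should not appeal to Theorem \ref{thm:Malle-Navarro-Tiep19} (stated only for $p=2$) to identify the top level; the weaker existence of some $p'$-degree character of level at least $\alpha$, which follows from \cite[Theorem B]{Navarro-Tiep19}, is what the paper uses and is all that is needed before invoking the continuity conjecture.
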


Thanks to Theorem \ref{thm:continuity}, we have the following.

\begin{theorem}\label{thm:Irr2'-bound}
Let $G$ be finite group and $P\in\Syl_2(G)$. Then
\[|\Irr_{2'}(G)|\geq \exp(P/P').\] Moreover, the equality occurs if
and only if $P$ is cyclic and self-normalizing.
\end{theorem}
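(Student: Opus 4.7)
The plan is to split $|\Irr_{2'}(G)|$ by $2$-rationality level. Set $\alpha := \log_2 \exp(P/P')$. For $p = 2$ no character has level exactly $1$, since a conductor is never $\equiv 2 \pmod 4$; levels therefore lie in $\{0\} \cup \{2, 3, \ldots\}$. For each $\ell$ with $2 \leq \ell \leq \alpha$, Theorem~\ref{thm:continuity} produces $\chi_\ell \in \Irr_{2'}(G)$ of level $\ell$, and the $p = 2$ case of the Navarro--Tiep field-of-values results \cite{Navarro-Tiep21} -- which give $\QQ_{2^\ell} \subseteq \QQ(\chi_\ell)$ -- imply that the Galois orbit of $\chi_\ell$ under $\Gal(\QQ_{|G|}/\QQ)$ has size at least $\phi(2^\ell) = 2^{\ell-1}$ and consists entirely of level-$\ell$ characters. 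Since orbits at different levels are disjoint, these contribute $\sum_{\ell=2}^{\alpha} 2^{\ell-1} = 2^{\alpha} - 2$ to $|\Irr_{2'}(G)|$. At level $0$, Hung--Malle--Mar\'{o}ti \cite{Hung-Malle-Maroti} supplies at least two rational $2'$-degree characters whenever $|G|$ is even. Summing yields $|\Irr_{2'}(G)| \geq 2^\alpha = \exp(P/P')$; the degenerate cases $\alpha = 0$ (where $P = 1$) and $\alpha = 1$ (where only the level-$0$ input is relevant) are immediate.

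For the equality claim, tightness in the above estimates forces $|\Irr_{2'}(G)| = 2^\alpha$. By Malle--Sp\"{a}th's proof of McKay for $p = 2$, this equals $|\Irr_{2'}(\bN_G(P))|$. Writing $\bN_G(P) = P \rtimes H$ with $|H|$ odd (by Schur--Zassenhaus) and noting that every odd-degree character of $\bN_G(P)$ lies over a linear character of $P$, a Clifford-theoretic analysis identifies $|\Irr_{2'}(\bN_G(P))|$ with $|\Irr((P/P') \rtimes H)|$. Analyzing the coprime action of the odd-order group $H$ on the abelian $2$-group $P/P'$ of exponent $2^\alpha$, one shows that $|\Irr((P/P') \rtimes H)| = 2^\alpha$ can hold only if $H = 1$ and $P/P'$ is cyclic of order $2^\alpha$: any nontrivial $H$ must act trivially on a cyclic $P/P'$ (whose automorphism group is a $2$-group), thereby inflating the count by a factor of $|\Irr(H)| \geq 2$, and a non-cyclic $P/P'$ has $|P/P'| > 2^\alpha$, again inflating the count. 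The Burnside basis theorem then upgrades $P/P'$ cyclic to $P$ itself cyclic, while $H = 1$ is precisely $\bN_G(P) = P$. Conversely, when $P$ is cyclic and self-normalizing, $|\Irr_{2'}(G)| = |\Irr(P)| = |P| = 2^\alpha$ follows directly from McKay.

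The main obstacle is the Galois-orbit lower bound $2^{\ell-1}$ at level $\ell \geq 2$, equivalently the inclusion $\QQ_{2^\ell} \subseteq \QQ(\chi)$ for level-$\ell$ $\chi \in \Irr_{2'}(G)$. The definition of level alone only guarantees that $\QQ(\chi) \cap \QQ_{2^\ell}$ is some subfield of $\QQ_{2^\ell}$ not contained in $\QQ_{2^{\ell-1}}$; for $\ell \geq 3$, however, $\QQ_{2^\ell}$ has two such index-$2$ subfields besides $\QQ_{2^{\ell-1}}$ (the real subfield $\QQ_{2^\ell}^+$ and its partner corresponding to the element $2^{\ell-1}-1 \in (\ZZ/2^\ell)^*$), and a priori $\QQ(\chi) \cap \QQ_{2^\ell}$ could equal one of them, yielding only $[\QQ(\chi):\QQ] \geq 2^{\ell-2}$ and breaking the count by a factor of $2$. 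Ruling out this pathology is the $p = 2$ case of the Navarro--Tiep field-of-values conjecture and is where the principal analytic work lies; with it in hand, the main inequality and the equality analysis reduce to routine McKay--Clifford bookkeeping.
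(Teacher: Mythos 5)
Your proof of the inequality is essentially the paper's own argument: split $\Irr_{2'}(G)$ by $2$-rationality level, invoke Theorem \ref{thm:continuity} to get a character at each level $2\le \ell\le \alpha$, use the $p=2$ case of Navarro--Tiep (which indeed forces $\QQ_{2^{\ell}}\subseteq \QQ(\chi)$, hence Galois orbits of size at least $2^{\ell-1}$ consisting of level-$\ell$ odd-degree characters), and add the two almost $2$-rational characters from Hung--Malle--Mar\'{o}ti; this is exactly the specialization of Theorem \ref{thm:bound} to $p=2$, so that half is fine.

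The gap is in the ``only if'' half of the equality statement. After passing to $\bN_G(P)=P\rtimes H$ via McKay and identifying $|\Irr_{2'}(\bN_G(P))|$ with the class number $k\bigl((P/P')\rtimes H\bigr)$, you dismiss the case $A:=P/P'$ non-cyclic by saying $|A|>2^{\alpha}$ ``inflates the count.'' That tacitly assumes $k(A\rtimes H)\ge |A|$, which is false for nontrivial coprime actions: for $A=C_2^4$ and $H=C_5$ acting fixed-point-freely (multiplication on $\FF_{16}$), one has $k(A\rtimes H)=8<16=|A|$. So the case of a non-cyclic $A$ with $H$ acting nontrivially is simply not covered, and it is not ``routine McKay--Clifford bookkeeping'': the paper's remark following Theorem \ref{thm:Irr2'-bound} points out that even the inequality $k(A\rtimes H)\ge \exp(A)$ for an odd-order group acting on an abelian $2$-group has no known proof avoiding the $2$-rationality-level idea. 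The short correct route -- the one the paper takes, and which is already available from your own counting -- is this: if $|\Irr_{2'}(G)|=\exp(P/P')$, then every estimate in your level decomposition is tight, so $G$ has exactly two almost $2$-rational odd-degree irreducible characters, and the equivalence (i)$\Leftrightarrow$(iii) of Theorem \ref{theorem-p'-degree} (with $2\sqrt{p-1}=2$ and $C_{\sqrt{p-1}}=1$ for $p=2$) immediately gives that $P$ is cyclic and $\bN_G(P)=P$. Your ``if'' direction via McKay for a cyclic self-normalizing Sylow $2$-subgroup is correct (the paper cites Dade's cyclic-defect result for this).
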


\begin{remark}
In general the continuity property fails at level $1$. On one hand,
as $\QQ_n=\QQ_{2n}$ for any odd positive integer $n$, there is no
character with $2$-rationality level $1$. On the other hand, for odd
$p$, the simple group $PSL_2(8)$, for instance, has three
irreducible characters of $3$-rationality level $2$ but none of
level $1$. The existence of level $1$ when $p>2$ depends on the
action of $\bN_G(P)/P$ on $P/P'$ and the precise connection is not
known to us at the time of this writing. Other than this exception,
Conjecture \ref{conj:continuity1} and Isaacs-Navarro's Conjecture
\ref{conj:Isaacs-Navarro} would provide complete control of the
$p$-rationality level of $p'$-degree characters of a finite group,
solely in terms of a Sylow $p$-subgroup.
\end{remark}

\begin{remark} The bound $|\Irr_{2'}(G)|\geq \exp(P/P')$ can be refined by partitioning
the set $\Irr_{2'}(G)$ according to the $2$-rationality level, and
this indeed is how we came up with the bound in Conjecture
\ref{conj:Irrp'-bound}. When $\exp(P/P')=2^\alpha\geq p^2$, as shown
in Section \ref{sec:lower-bound}, $G$ will have at least two
$2$-rational odd-degree irreducible characters and, for each $2\leq
\beta\leq \alpha$, at least $2^{\beta-1}$ odd-degree irreducible
characters of $2$-rationality level $\beta$.
\end{remark}

\begin{remark}
Given that the McKay conjecture is now known for $p=2$ (thanks to
the work of G. Malle and B. Sp\"{a}th \cite{MS16}), Theorem
\ref{thm:Irr2'-bound} of course can also be argued by assuming that
$P\nor G$, and so it is reduced to showing that the conjugacy class
number $k(AM)$ of the semidirect product of an odd-order group $M$
acting on an abelian $2$-group $A$ is at least $\exp(A)$. However,
even in this much simpler situation, we are not aware of any proof
that does not use the idea of $2$-rationality level outlined in the
previous remark.
\end{remark}

\begin{remark}\label{remark:1}
The bound proposed in Conjecture \ref{conj:Irrp'-bound}
substantially improves the known bound $|\Irr_{p'}(G)|\geq
2\sqrt{p-1}$ established by G. Malle and A. Mar\'{o}ti in
\cite{Malle-Maroti}. In fact, Malle-Mar\'{o}ti's bound was improved
in \cite{Hung-Malle-Maroti} where the same bound is obtained but for
the number of almost $p$-rational $p'$-degree characters instead.
Taking this into account, Conjecture \ref{conj:Irrp'-bound} is
reduced to showing that, when $|G|$ is divisible by $p$, the number
of $p'$-degree irreducible characters of $G$ of level 2 or higher is
at least $\frac{\exp(P/P')-1}{p-1}-1$.
\end{remark}

\begin{remark} We reduce Conjecture \ref{conj:continuity1} to a
problem on almost simple groups (Theorem \ref{thm:reduction}), and
then make use of the classification of finite simple groups to
achieve the results for $p=2$. Along the way, it is shown that the
$p$-rationality level of a $p'$-degree irreducible character $\chi$
of a finite group of Lie type in characteristic not equal to $p$ is
determined solely by the $p$-part of a semisimple element $s$ (in
the dual group) defining the Lusztig series containing $\chi$, under
a certain condition on the centralizer of $s$, see Theorem
\ref{thm:technical}. This may be of independent interest and will be
useful in future study of $p$-rationality of character values. In
fact, it has been used in \cite{Hung22} to confirm a conjecture on
values of Sylow restrictions of $p'$-degree characters
(\cite[Conjecture C]{Navarro-Tiep21}) for certain classical groups.
\end{remark}

The layout of the paper is as follows. In Section \ref{sec:MNconj},
we explain how Conjecture \ref{conj:continuity1} follows from the MN
conjecture and mention some other consequences related to
$p$-rationality level. In Section \ref{sec:3} we introduce the
notion of $p$-rationality level of a character value and prove a key
result on the level of a character product. Section
\ref{sec:odd-degree} is devoted to the proofs of Theorems
\ref{thm:main1} and \ref{thm:continuity}, assuming a continuity
result for almost simple groups that is proved in Sections
\ref{sec:simple-groups}, \ref{sec:linear-unitary} and \ref{sec:E6}.
Finally, we discuss Conjecture \ref{conj:Irrp'-bound} and prove
Theorem \ref{thm:Irr2'-bound} in Section~\ref{sec:lower-bound}.


\section{The McKay-Navarro conjecture and $p$-rationality}\label{sec:MNconj}

In this section we explain that the continuity of $p$-rationality of
$p'$-degree characters indeed follows from the McKay-Navarro
conjecture.

Our notation is fairly standard, following \cite{Isaacs1,Navarro18}
on character theory and \cite{Carter85,Digne-Michel91} on
representation theory of finite groups of Lie type.

Let $p$ be a prime and $G$ a finite group. As usual let $\Irr(G)$
denote the set of all irreducible ordinary characters of $G$, and
recall that
\[\Irr_{p'}(G):=\{\chi\in\Irr(G):p\nmid \chi(1)\}.\] The well-known
McKay conjecture \cite{McKay} asserts that the number of irreducible
$p'$-degree characters of $G$ equals to that of the $p$-local
subgroup $\bN_G(P)$, which is the normalizer of some
$P\in\Syl_p(G)$; that is,
\[
|\Irr_{p'}(G)|=|\Irr_{p'}(\bN_G(P))|.
\]
In \cite{Navarro04}, G. Navarro brought Galois automorphisms into
the McKay conjecture by proposing that there exists a bijection from
$\Irr_{p'}(G)$ to $\Irr_{p'}(\bN_G(P))$ that commutes with the
action of a certain subgroup $\mathcal{H}$ (depending on $p$) of the
Galois group $\Gal(\QQ_{|G|}/\QQ)$. The subgroup $\mathcal{H}$
consists of automorphisms that send every root of unity
$\zeta\in\QQ_{|G|}$ of order not divisible by $p$ to $\zeta^{q}$,
where $q$ is a certain fixed power of $p$, see \cite{Navarro04} and
\cite[Conjecture~9.8]{Navarro18}. This refinement of the McKay
conjecture has now become the McKay-Navarro conjecture, also known
as the Galois-McKay conjecture.

Assume the validity of the MN conjecture. We then have
\[|\{\chi\in \Irr_{p'}(G): \chi^H=\chi\}|= |\{\chi\in \Irr_{p'}(\bN_G(P)): \chi^H=\chi\}|\]
for every subgroup $H$ of $\mathcal{H}$. Since
$\Gal(\QQ_{|G|}/\QQ_{p^\alpha|G|_{p'}})$ is contained in
$\mathcal{H}$ for every nonnegative integer $\alpha\le \log_p|G|_p$,
the conjecture implies that the number of characters at every
$p$-rationality level $\alpha$ in $\Irr_{p'}(G)$ and
$\Irr_{p'}(\bN_G(P))$ is the same:
\[
|\{\chi\in\Irr_{p'}(G): \lev(\chi)=
\alpha\}|=|\{\psi\in\Irr_{p'}(\bN_G(P)): \lev(\psi)= \alpha\}|.
\]

Following \cite{Isaacs-Navarro01}, for each $\alpha\in\ZZ^{+}$, we
denote by $\sigma_{\alpha}$ the automorphism in $Gal(\QQ^{ab}/\QQ)$
that fixes roots of unity of order not divisible by $p$ and maps
every $p$-power root of unity $\xi$ to $\xi^{1+p^\alpha}$. Abusing
notation, when working with a specific group $G$, we also use
$\sigma_{\alpha}$ for the restriction of this automorphism to
$\QQ_{|G|}$.

Let $|G|=p^am$ with $(p,m)=1$ and $a\in\ZZ^+$. Note that
$\sigma_{\alpha}$ is a generator for the cyclic group
$Gal(\QQ_{p^am}/\QQ_{p^\alpha m})$ when $\alpha\geq 2$ or $p>2$.
Note also that there does not exist $\chi\in\Irr_{2'}(G)$ that is
not $2$-rational but is $\sigma_1$-invariant. (Assume so. When
$p=2$, the fixed field of $\sigma_1$ in $\QQ_{p^am}$ is $\QQ_m$ when
$a\leq 2$ and is $\QQ_{m}(\sqrt{2}i)$ when $a\geq 3$. In any case,
we have $\QQ(\chi)\subseteq \QQ_{m}(\sqrt{2}i)$. This contradicts
the main result of \cite{ILNT19} stating that the field of values of
a non-2-rational odd-degree irreducible character always contains
the imaginary unit $i$.) We have shown that, if
$\chi\in\Irr_{p'}(G)$ is not $p$-rational, then

\begin{quote}\emph{$\lev(\chi)$ is precisely the smallest positive integer
$\alpha$ such that $\chi$ is $\sigma_{\alpha}$-invariant.}
\end{quote}

The relevance of $p$-rationality level was noticed even before the
birth of the MN conjecture. In 2001, motivated by Brauer's Problem
12 \cite{Brauer63}, M. Isaacs and G. Navarro \cite{Isaacs-Navarro01}
proposed the following, but in the language of the Galois
automorphism $\sigma_\alpha$.

\begin{conjecture}[Isaacs-Navarro]\label{conj:Isaacs-Navarro}
Let $p$ be a prime, $G$ a finite group and $P\in\Syl_p(G)$. Let
$\alpha\in\ZZ^{+}$. Then $\exp(P/P')\leq p^\alpha$ if and only if
the $p$-rationality level of every $p'$-degree irreducible
characters of $G$ is at most $\alpha$.
\end{conjecture}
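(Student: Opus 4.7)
The plan is to split the biconditional into its two implications and approach each via the McKay--Navarro framework together with Clifford theory. The ``lower bound'' direction -- that if every $\chi \in \Irr_{p'}(G)$ has level at most $\alpha$ then $\exp(P/P') \leq p^\alpha$ -- amounts, in its contrapositive form, to exhibiting a $p'$-degree character of level exactly $\log_p(\exp(P/P'))$. For $p=2$ this was established by Navarro--Tiep \cite{Navarro-Tiep19} and Malle \cite{Malle19}, and Theorem \ref{thm:continuity} of the present paper refines it by producing characters of every intermediate level; for odd $p$ one would attempt an analogous construction, typically by extending and inducing a linear character of $P$ of maximal order from within $\bN_G(P)$.

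For the harder direction, assume $\exp(P/P') \leq p^\alpha$ and take $\chi \in \Irr_{p'}(G)$; we want $\lev(\chi) \leq \alpha$. Using the reformulation $\lev(\chi) \leq \alpha \Longleftrightarrow \chi^{\sigma_\alpha} = \chi$ and assuming the MN conjecture, this reduces to the analogous statement for $\bN_G(P)$, in which $P \nor \bN_G(P)$ and $[\bN_G(P):P]$ is coprime to $p$. In this setting, Clifford theory forces any irreducible character of $P$ lying under a $p'$-degree character of $\bN_G(P)$ to be linear, hence factoring through $P/P'$ and of order dividing $\exp(P/P') \leq p^\alpha$. The technical heart of the argument is then to show that the $p$-part of $c(\chi)$ is controlled by $c(\theta)_p$: one unwinds the Clifford correspondence (projective extension of $\theta$ to its inertia subgroup, followed by induction) and tracks the Galois action $\sigma_\alpha$ at each step, verifying that the $p'$-quotient $\bN_G(P)/P$ contributes only $p'$-th roots of unity to $\QQ(\chi)$.

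The main obstacle is two-fold. First, the McKay--Navarro conjecture is open for odd $p$, so the reduction to $\bN_G(P)$ is not unconditionally available; a direct proof would have to follow the reduction-to-almost-simple strategy of the present paper (Theorem \ref{thm:reduction}) and appeal to the classification of finite simple groups. For simple groups of Lie type in non-defining characteristic, Theorem \ref{thm:technical} of this paper is the decisive tool, as it expresses $\lev(\chi)$ in terms of the $p$-part of the semisimple element parametrising its Lusztig series. Second, even granting the reduction to the normal-Sylow case, the projective lift of $\theta$ to its inertia subgroup is governed by a $2$-cocycle valued in roots of unity, and it is precisely the potential $p$-power contribution of this cocycle to the field of values that one must rule out. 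I expect this cocycle analysis, in tandem with a case-by-case treatment of the almost simple groups for odd $p$, to be the principal difficulty in moving beyond the $p=2$ result obtained here.
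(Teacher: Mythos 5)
You were asked to prove what is, in the paper, only Conjecture \ref{conj:Isaacs-Navarro}: the paper gives no proof of this statement, it merely records its status --- confirmed for $p=2$ by Navarro--Tiep and Malle, with the `if' direction (all $p'$-degree irreducible characters of level at most $\alpha$ forces $\exp(P/P')\leq p^\alpha$) known unconditionally for \emph{all} primes by \cite[Theorem B]{Navarro-Tiep19}, and the `only if' direction reduced to almost quasisimple groups by \cite[Theorem C]{Navarro-Tiep19}. Judged as a proof, your proposal therefore has a genuine gap, and you essentially say so yourself: the entire hard direction rests either on the McKay--Navarro conjecture, which is open for odd $p$, or on carrying out the CFSG verification for almost quasisimple groups, and neither is done. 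Naming these obstacles is an accurate diagnosis of why the statement is still a conjecture, but it does not close the argument.

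Two specific corrections to the parts you do sketch. First, your ``lower bound'' direction is not open for odd $p$: \cite[Theorem B]{Navarro-Tiep19} already produces, for every prime, a $p'$-degree irreducible character of level at least $\log_p(\exp(P/P'))$, so no new construction by inducing linear characters of $P$ from inside $\bN_G(P)$ is needed there; treating this as an open problem for odd $p$ misstates the literature the paper itself cites. Second, granting MN and reducing to $\bN_G(P)$, the projective-extension cocycle you single out as the ``technical heart'' is not the real difficulty: $\bN_G(P)$ has a normal Sylow $p$-subgroup, hence is $p$-solvable, and for $p$-solvable groups the conjecture is precisely what Isaacs and Navarro established in \cite{Isaacs-Navarro01} (the paper's own reductions handle this situation with Lemma \ref{lem:1} and Lemma \ref{lem:canonical extension}, with no cocycle analysis required). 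The genuinely missing content is the verification of \cite[Conjecture 5.4]{Navarro-Tiep19} for almost quasisimple groups at odd primes --- where results such as Theorem \ref{thm:technical} would indeed be the relevant tools --- and that is exactly the step your proposal does not supply.
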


\noindent This has been confirmed for $p=2$
\cite{Navarro-Tiep19,Malle19}. In fact, the `if' direction of the
conjecture has been confirmed for all $p$ \cite[Theorem
B]{Navarro-Tiep19} and the `only if' direction has been reduced to
almost quasisimple groups \cite[Theorem C]{Navarro-Tiep19}.

\begin{theorem}[Malle-Navarro-Tiep]\label{thm:Malle-Navarro-Tiep19}
Let $p=2$, $G$ a finite group, and $P\in\Syl_p(G)$. Suppose that
$\exp(P/P')= p^\alpha$. Then
\begin{enumerate}[\rm(i)]
\item If $\alpha=1$ then every $p'$-degree irreducible characters of
$G$ has $p$-rationality level at most $1$.

\item If $\alpha\geq 2$ then the maximal level of an  irreducible $p'$-degree character of
$G$ is $\alpha$.
\end{enumerate}
\end{theorem}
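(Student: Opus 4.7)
My plan is to split the theorem into two halves: (A) an upper bound, valid for all primes, saying that $\exp(P/P') \le p^\alpha$ forces every $\chi \in \Irr_{p'}(G)$ to have $\lev_p(\chi) \le \alpha$; and (B) a lower bound for $p=2$ and $\alpha \ge 2$, producing a character realising level exactly $\alpha$. Part (A) alone gives (i) (take $\alpha=1$, noting that level $1$ does not occur for $p=2$) and the upper inequality in (ii), so the real work is in (B).

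For (A), I would use the framework established just before the theorem: $\chi$ has $\lev_p(\chi) \le \alpha$ if and only if $\chi$ is $\sigma_\alpha$-invariant (for $\alpha \ge 2$; the $\alpha = 1$ case for $p=2$ uses in addition the \cite{ILNT19} result that a non-$2$-rational odd-degree character has $\sqrt{-1}$ in its field of values). Arguing by induction on $|G|$, the local engine is the elementary observation that every linear character of $P$ has order dividing $\exp(P/P') \le p^\alpha$ and so is $\sigma_\alpha$-fixed; hence every $p'$-degree character of $\bN_G(P)$ lying over such a linear character is $\sigma_\alpha$-invariant via Clifford theory over $P/P'$. One then transports the invariance back up to $G$ using a Galois-equivariant McKay-type correspondence (or, where available, the Isaacs--Glauberman correspondence); this is the strategy used in the proof of \cite[Theorem B]{Navarro-Tiep19}.

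For (B), I would first invoke \cite[Theorem C]{Navarro-Tiep19} to cut down to almost quasisimple groups via a standard Clifford-theoretic minimal-counterexample argument, so that it remains to produce, inside $S \le G \le \Aut(S)$ with $S$ simple and a Sylow $2$-subgroup $Q$ of $G$ satisfying $\exp(Q/Q') = 2^\alpha$, an odd-degree character of level exactly $\alpha$. The remaining work then runs through the classification: alternating groups via the Murnaghan--Nakayama rule and hook-length bookkeeping on $2$-core towers, sporadic groups by direct character-table inspection, and finite groups of Lie type via Lusztig's Jordan decomposition---choosing a semisimple $2$-element $s$ in the dual group $\bG^*$ with $|s| = 2^\alpha$ inside a maximal torus matching the Sylow structure, so that a suitable $\chi_{(s,\psi)} \in \mathcal{E}(G,s)$ acquires $2$-part of its conductor exactly $2^\alpha$. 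The main obstacle, where the bulk of the effort in \cite{Malle19, Navarro-Tiep19} is concentrated, is the case of groups of Lie type in odd characteristic: the field of values $\QQ(\chi_{(s,\psi)})$ is not a priori determined by $s$ alone, and its Galois behaviour under $\sigma_\alpha$ is sensitive to the $\psi$-component and to how Jordan decomposition interacts with diagonal automorphisms when $\bZ(\bG)$ is disconnected. One therefore has to compute character values explicitly on regular semisimple elements (or invoke Digne--Michel rationality results together with Malle's analysis of central characters) to force the $2$-part of $\QQ(\chi)$ to be as large as claimed; for $p>2$, the analogous control is not known, which is why the full Isaacs--Navarro conjecture remains open.
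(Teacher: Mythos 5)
This result is not proved in the paper at all: it is quoted as the combination of \cite[Theorem B]{Navarro-Tiep19} (the upper bound on the level, valid for every prime) with the ``only if'' direction for $p=2$, obtained from the reduction to almost quasisimple groups in \cite[Theorem C]{Navarro-Tiep19} and Malle's classification-based verification in \cite{Malle19} --- which is exactly the decomposition and the sources your proposal uses, so your route is essentially the same as the paper's. The only caveat is your aside that the upper bound is transported from $\bN_G(P)$ to $G$ via a ``Galois-equivariant McKay-type correspondence'': such a correspondence is essentially the McKay--Navarro conjecture itself and is not how \cite[Theorem B]{Navarro-Tiep19} is proved, but since you ultimately invoke that theorem directly this does not affect the correctness of your argument.
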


The following simple statement is equivalent to the combination of
Conjectures \ref{conj:continuity1} and \ref{conj:Isaacs-Navarro}, as
shown below.

\begin{conjecture}\label{conj:M<G}
Let $p$ be a prime, $G$ a finite group and $\alpha\in \ZZ^{\geq 2}$.
Let $M$ be a subgroup of $G$ of $p'$-index. Then $G$ has an
irreducible $p'$-degree characters of $p$-rationality level $\alpha$
if and only if $M$ does.
\end{conjecture}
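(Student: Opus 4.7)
The plan is to prove Conjecture \ref{conj:M<G} directly by Clifford theory, rather than by passing through Conjectures \ref{conj:continuity1} and \ref{conj:Isaacs-Navarro}. Since $[G:M]$ is a $p'$-number, any Sylow $p$-subgroup $P$ of $M$ is already a Sylow $p$-subgroup of $G$. Fix a subnormal series $M=M_0\trianglelefteq M_1\trianglelefteq\cdots\trianglelefteq M_r=G$ whose consecutive indices are all primes $q\neq p$; by induction on $r$, one reduces to the case $M\trianglelefteq G$ with $[G:M]=q$ prime and $q\neq p$.

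For the direction ``$G\Rightarrow M$'', take $\chi\in\Irr_{p'}(G)$ of level $\alpha$ and let $\psi$ be an irreducible constituent of $\chi|_M$. Clifford's theorem yields three sub-cases: (A) $\chi|_M=\psi$; (B) $\chi|_M=\psi_1+\cdots+\psi_q$ is a sum of $q$ distinct $G$-conjugates; or (C) $\chi|_M=q\psi$ with $\psi$ $G$-stable. In cases (A) and (C), $\chi$ and $\psi$ are related by a Gallagher twist by a linear character of $G/M$, which has order dividing $q\neq p$ and is therefore $p$-rational; comparing $p$-parts of conductors yields $\lev(\psi)=\lev(\chi)=\alpha$. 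In case (B), $\chi=\psi^G$ and the induction formula gives $\QQ(\chi)\subseteq \QQ(\psi_1)=\cdots=\QQ(\psi_q)$, hence $\lev(\psi_i)\geq\alpha$; if some $\psi_i$ has level exactly $\alpha$ we are done, and if all have level strictly greater than $\alpha$ we argue that a suitable Galois conjugate of $\psi_1$, still of $p'$-degree and lying under some $\chi'\in\Irr_{p'}(G)$, must achieve level exactly $\alpha$.

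For ``$M\Rightarrow G$'', start with $\psi\in\Irr_{p'}(M)$ of level $\alpha\geq 2$. If $\psi$ is not $G$-stable, then $\chi:=\psi^G\in\Irr_{p'}(G)$ satisfies $\chi^{\sigma_\alpha}=\chi$, so $\lev(\chi)\leq\alpha$; and $\lev(\chi)=\alpha$ unless $\sigma_{\alpha-1}$ permutes the $G$-orbit $\{\psi^g:g\in G\}$ nontrivially, causing the level to drop. If $\psi$ is $G$-stable, then since $G/M$ is cyclic of prime order $\psi$ extends to $G$, and Gallagher produces $q$ extensions on which $\sigma_\alpha$ acts; when the ``obstruction'' character $\lambda\in\Irr(G/M)$ defined by $\chi^{\sigma_\alpha}=\chi\lambda$ is trivial, one extension has level exactly $\alpha$, but when $\lambda\neq 1$ every extension has level strictly greater than $\alpha$.

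The main obstacle is thus the two pathological sub-cases above where the naive Clifford correspondent has the wrong level. My plan is to partition $\Irr_{p'}(M)$ into joint orbits under the action of $G$ together with $\langle\sigma_{\alpha-1},\sigma_\alpha\rangle$, show these orbits correspond under Clifford theory to joint orbits in $\Irr_{p'}(G)$, and carry out a combinatorial counting argument -- exploiting that $[G:M]=q$ has $p'$-order while the Galois elements act only through $p$-power roots of unity -- to produce, from any level-$\alpha$ character of $M$, a level-$\alpha$ character of $G$ lying above a possibly different $M$-character. As a consistency check, for $p=2$ one can verify the conjecture independently: since $G$ and $M$ share a Sylow $2$-subgroup $P$, Theorem \ref{thm:Malle-Navarro-Tiep19} and Theorem \ref{thm:continuity} apply with the same value of $\exp(P/P')$, forcing the level sets $\{\lev(\chi):\chi\in\Irr_{2'}(G)\}$ and $\{\lev(\psi):\psi\in\Irr_{2'}(M)\}$ to coincide.
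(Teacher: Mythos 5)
The statement you set out to prove is an open conjecture: the paper itself never proves Conjecture \ref{conj:M<G}, but only shows (Theorem \ref{thm:McKay-Navarro}) that it follows from Conjectures \ref{conj:continuity1} and \ref{conj:Isaacs-Navarro} (hence from the McKay--Navarro conjecture), is in fact equivalent to their combination, and is known for $p=2$ through Theorems \ref{thm:continuity} and \ref{thm:Malle-Navarro-Tiep19}. Your direct Clifford-theoretic attack fails at its very first step: a subgroup $M$ of $p'$-index need not be subnormal in $G$, so there is in general no chain $M=M_0\nor M_1\nor\cdots\nor M_r=G$ at all, let alone one with prime indices. The motivating and essential instance of the conjecture is $M=\bN_G(P)$ for $P\in\Syl_p(G)$, which is self-normalizing and typically non-normal and maximal (for example $G=A_5$, $p=5$, $M\cong D_{10}$ of index $6$); your reduction never reaches this case, and it is precisely the case in which the statement carries the strength of a Galois refinement of the McKay conjecture rather than of a Clifford-theory exercise.

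Moreover, in the only case your reduction does cover, namely $M\nor G$ of $p'$-index, the conclusion for $\alpha\geq 2$ is already immediate from Lemma \ref{lem:1}(i),(ii): every irreducible constituent $\theta$ of $\chi_M$ has $p'$-degree and $\lev(\theta)=\lev(\chi)$, and conversely every $\chi\in\Irr(G)$ over a given $\theta\in\Irr_{p'}(M)$ has $p'$-degree and the same level, so the ``pathological sub-cases'' you flag (and for which you offer only a plan, not a proof) do not actually arise; your proposed orbit-counting under the joint action of $G$ and $\langle\sigma_{\alpha-1},\sigma_\alpha\rangle$ is therefore both unnecessary in the normal case and unavailable in the non-normal one. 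Your closing consistency check for $p=2$ is correct, and it mirrors how the paper's implication is proved: since $G$ and $M$ share a Sylow $p$-subgroup $P$, Theorem \ref{thm:Malle-Navarro-Tiep19} together with Theorem \ref{thm:continuity} pins down the set of attained levels $\geq 2$ on both sides as $\{2,\dots,\log_2\exp(P/P')\}$ (the same shared-Sylow argument, with the two conjectures in place of the two theorems, is exactly the proof of Theorem \ref{thm:McKay-Navarro}(i)). But this settles only $p=2$; for general $p$ the statement remains open, and your argument does not close it.
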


\begin{lemma}\label{lem:1}
Let $p$ be a prime. Let $G$ be a finite group and $N \nor G$. Let
$\theta\in\Irr(N)$ and $\chi\in\Irr(G)$ such that $\theta$ is a
constituent of $\chi_N$.
\begin{itemize}
\item[(i)] Assume that $[G:N]$ is not divisible by $p$ and $\log_{p}|G|_p\geq\alpha\geq
1$. Then $\chi$ is $\sigma_\alpha$-invariant if and only if $\theta$
is. Furthermore, $\chi$ is $\sigma$-invariant for every $\sigma\in
\Gal(\QQ_{|G|}/\QQ_{p^\alpha}|G|_{p'})$ if and only if $\theta$ is.

  \item[(ii)] If $[G:N]$ is not divisible by $p$ and $\max\{\lev(\theta),\lev(\chi)\}\geq
  2$,
then $\lev(\theta)=\lev(\chi)$.
  \item[(iii)] If $p\nmid \chi(1)$ and $\lev(\chi)\geq 1$, then $\lev(\theta)\leq
  \lev(\chi)$.
 \end{itemize}
\end{lemma}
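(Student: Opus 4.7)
The unifying observation is that any $\sigma \in \Gal(\QQ_{|G|}/\QQ_{p^\alpha|G|_{p'}})$ has $p$-power order and fixes every $p'$-root of unity, and the induced action on $\Irr(N)$ commutes with $G$-conjugation via $(\theta^g)^\sigma = (\theta^\sigma)^g$. Each of (i)--(iii) then rests on the elementary principle that a $p$-group acting on a set of $p'$-cardinality has a fixed point; combined with the commutativity, once one element of a $G$-orbit is $\sigma$-fixed, every element of that orbit is.

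For part (i), the $G$-orbit of $\theta$ has size $t = [G:G_\theta]$ dividing $[G:N]$, hence coprime to $p$. The ``only if'' direction then follows directly from the principle above: $\chi^{\sigma_\alpha} = \chi$ forces $\sigma_\alpha$ to permute $\{\theta_1,\dots,\theta_t\}$, and the principle yields $\theta^{\sigma_\alpha} = \theta$. For the ``if'' direction, I would set $T = G_\theta$, write $\chi = \psi^G$ with $\psi \in \Irr(T|\theta)$ by Clifford, and reduce to the case where $\theta$ extends to some $\hat\theta \in \Irr(T)$ via a standard character triple isomorphism, exploiting that $|T/N|$ is a $p'$-number so the relevant Schur multiplier is $p$-innocuous. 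Gallagher's theorem then gives $\Irr(T|\theta) = \{\hat\theta\lambda : \lambda \in \Irr(T/N)\}$. Writing $\sigma_\alpha(\hat\theta) = \hat\theta\mu$ for a linear $\mu \in \Irr(T/N)$, the orbit of $\hat\theta$ under $\langle \sigma_\alpha \rangle$ has size $\ord(\mu)$, which divides both $|T/N|$ and a power of $p$, forcing $\mu = 1$. Since $\sigma_\alpha$ fixes each $\lambda$ (values in $p'$-roots of unity), it fixes every $\hat\theta\lambda$, hence $\psi$, so $\chi^{\sigma_\alpha} = (\psi^{\sigma_\alpha})^G = \chi$. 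The ``furthermore'' clause runs verbatim for any $\sigma$ in the cited Galois group.

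Part (ii) follows by applying the ``furthermore'' clause of (i): the remark preceding the lemma identifies $\lev(\cdot)$ (at level at least $2$) with the smallest positive $\alpha$ for which the character is fixed by $\Gal(\QQ_{|G|}/\QQ_{p^\alpha|G|_{p'}})$, and (i) makes this smallest $\alpha$ equal for $\chi$ and $\theta$. For part (iii), since $\chi(1) = [G:G_\theta]\psi(1)$ and $p \nmid \chi(1)$, we get $[G:G_\theta]$ coprime to $p$; the $G$-orbit of $\theta$ thus has $p'$-size, so the ``only if'' principle of (i) applies to every $\sigma \in \Gal(\QQ_{|G|}/\QQ_{p^{\lev(\chi)}|G|_{p'}})$, yielding $\theta^\sigma = \theta$ and hence $\lev(\theta) \leq \lev(\chi)$. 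The main obstacle throughout is the character triple reduction in the ``if'' direction of (i): without arranging that $\theta$ extends to $T$, the orbit-size argument on $\Irr(T|\theta)$ does not directly close, since $|\Irr(T|\theta)|$ need not be coprime to $p$ a priori even when $|T/N|$ is.
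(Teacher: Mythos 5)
Your arguments for (ii), (iii), and the ``only if'' half of (i) are correct and essentially coincide with the paper's: the paper proves (iii) by exactly your orbit argument (the group $\Gal(\QQ_{|G|}/\QQ_{p^{\alpha}|G|_{p'}})$ has $p$-power order and permutes the $G$-conjugates of $\theta$, whose number divides $\chi(1)$, and a fixed conjugate forces $\theta$ itself to be fixed), and it deduces (ii) from the ``furthermore'' clause of (i) just as you do. The difference is in (i): the paper does not prove it, but quotes the first statement as \cite[Lemma 4.2]{Navarro-Tiep21} and says the second is proved similarly, whereas you attempt a self-contained proof.

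It is precisely in your ``if'' direction of (i) that there is a genuine gap, at the spot you flag yourself. After passing to $T=G_\theta$ you propose to reduce to the case where $\theta$ extends to $T$ ``via a standard character triple isomorphism,'' on the grounds that $|T/N|$ is a $p'$-number. Two problems. First, the obstruction to extending $\theta$ lies in $H^2(T/N,\CC^\times)$ and indeed has $p'$-order, but a $p'$-order obstruction need not vanish, so nothing in the sketch produces an extension (extraspecial-type examples show $G$-invariant characters over $p'$-index subgroups genuinely fail to extend). Second, and more seriously, an ordinary character triple isomorphism carries no control over fields of values and need not commute with $\sigma_\alpha$; so even after replacing $(T,N,\theta)$ by an isomorphic triple in which the character extends, the conclusion you draw there (that $\widehat{\theta}^{\sigma_\alpha}=\widehat{\theta}\mu$ with $\mu$ of $p'$-order forces $\mu=1$, hence every member of $\Irr(T\mid\theta)$ is $\sigma_\alpha$-fixed) does not transport back to the original triple. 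Arranging a Galois-equivariant (field-compatible) reduction is exactly the nontrivial content of the cited Navarro--Tiep lemma; as written, your argument assumes it rather than proves it. The remaining steps (Gallagher, the $p$-power order of $\sigma_\alpha$ on $\QQ_{|G|}$, $\mu$ fixed by $\sigma_\alpha$ and of $p'$-order, compatibility of induction with Galois action) are fine once an extension with controlled rationality is available.
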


\begin{proof} The first statement of part (i) is \cite[Lemma 4.2]{Navarro-Tiep21} and the second statement is proved similarly.
For parts (ii) and (iii), set $\alpha:=\lev(\chi)$ and
$\beta:=\lev(\theta)$.

We now prove (ii). First suppose that $\alpha\geq 2$. Then $\beta\le
\alpha$ by part (i). Assume by contradiction that $\beta<\alpha$.
Then $\theta$ would be $\sigma$-invariant for every $\sigma\in
\Gal(\QQ_{|G|}/\QQ_{p^{\alpha-1}|G|_{p'}})$, and thus so is $\chi$
by again (i). It would follow that $\lev(\chi)\leq \alpha-1$, a
contradiction. The case $\beta\ge 2$ is similar.

For part (iii) we note that the Galois group
$\Gal(\QQ_{|G|}/\QQ_{p^{\alpha}|G|_{p'}})$ permutes the
$G$-conjugates of $\theta$. Since $p\nmid \chi(1)$, the number of
those conjugates is coprime to $p$. However, the order of
$\Gal(\QQ_{|G|}/\QQ_{p^{\alpha}|G|_{p'}})$ is a $p$-power, and it
follows that some conjugate of $\theta$ is point-wise fixed  by
$\Gal(\QQ_{|G|}/\QQ_{p^{\alpha}|G|_{p'}})$, which implies that
$\theta$ is point-wise fixed by
$\Gal(\QQ_{|G|}/\QQ_{p^{\alpha}|G|_{p'}})$ as well.
\end{proof}

\begin{theorem}\label{thm:McKay-Navarro} The following hold:

\begin{enumerate}[\rm(i)]

\item Conjectures \ref{conj:continuity1} and
\ref{conj:Isaacs-Navarro} imply Conjecture \ref{conj:M<G}.

\item Conjecture \ref{conj:M<G} implies Conjecture
\ref{conj:continuity1}.

\item The only if direction of Conjecture \ref{conj:M<G} implies Conjecture
\ref{conj:Isaacs-Navarro}.

\item Conjectures \ref{conj:continuity1}, \ref{conj:Isaacs-Navarro}, and \ref{conj:M<G} are all consequences
of the MN conjecture.
\end{enumerate}
\end{theorem}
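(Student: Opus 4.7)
The plan is to establish (iv) first as the anchor via a reduction to $N := \bN_G(P)$, and then obtain (i), (ii), (iii) from one another (or from (iv)) using suitable choices of subgroup. For (iv), the paragraph preceding Lemma \ref{lem:1} already shows that the MN conjecture yields the level-by-level equality $|\{\chi \in \Irr_{p'}(G) : \lev(\chi) = \alpha\}| = |\{\psi \in \Irr_{p'}(N) : \lev(\psi) = \alpha\}|$ for each $\alpha \geq 0$, reducing all three conjectures to the case of $N$, in which $P$ is a normal Sylow $p$-subgroup and $K := N/P$ is a $p'$-group. Any $\chi \in \Irr_{p'}(N)$ must lie over a linear $\theta \in \Irr(P/P')$, since $\Irr(P)$ has $p$-power degrees; conversely, for each such $\theta$, the invariance relation $\theta^g = \theta$ for $g \in I_N(\theta)$ forces $[I_N(\theta), P] \subseteq \ker\theta$, making $P/\ker\theta$ a central Hall $p$-subgroup of $I_N(\theta)/\ker\theta$. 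Schur-Zassenhaus yields an extension $\hat\theta \in \Irr(I_N(\theta))$, and Gallagher's theorem followed by Clifford induction produces $p'$-degree characters of $N$ above $\theta$. Lemma \ref{lem:1}(ii) then guarantees that any such character has $p$-rationality level $\lev(\theta)$ whenever $\lev(\theta) \geq 2$. Since $P/P'$ is an abelian $p$-group, its linear characters realize every level from $0$ to $\log_p \exp(P/P')$, so the same set of levels $\geq 2$ is realized in $\Irr_{p'}(N)$. This verifies Conjectures \ref{conj:continuity1}, \ref{conj:Isaacs-Navarro}, and \ref{conj:M<G} for $N$, and MN transports each to $G$ (and, via the same analysis applied to $\bN_M(P)$, to any $p'$-index subgroup $M$).

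For parts (ii) and (iii), I take $M := P$ in Conjecture \ref{conj:M<G}, noting $[G:P]$ is coprime to $p$. The $p'$-degree irreducible characters of $P$ are precisely the linear characters in $\Irr(P/P')$, whose levels equal the $p$-adic valuations of their orders; since $P/P'$ is an abelian $p$-group, these realize every value between $0$ and $\log_p \exp(P/P')$. For (iii): if $\exp(P/P') \leq p^\alpha$ but some $\chi \in \Irr_{p'}(G)$ has level $\beta > \alpha$ (necessarily $\beta \geq 2$), the only-if direction of Conjecture \ref{conj:M<G} would produce a linear character of $P$ of level $\beta$, contradicting $\log_p \exp(P/P') \leq \alpha$; this yields the only-if direction of Conjecture \ref{conj:Isaacs-Navarro}, and the if direction is \cite[Theorem B]{Navarro-Tiep19}. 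For (ii): a level-$\alpha$ character of $G$ with $\alpha \geq 2$ forces, via Conjecture \ref{conj:M<G}, a linear character of $P$ of level $\alpha$, hence $\exp(P/P') \geq p^\alpha$; then $P$ has linear characters at every intermediate level $2 \leq \beta \leq \alpha$, and Conjecture \ref{conj:M<G} in the reverse direction transports them back to $G$.

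For (i), let $M \leq G$ have $p'$-index; a Sylow $p$-subgroup $P$ of $G$ may be chosen inside $M$, giving $P \in \Syl_p(M)$ as well, so $\gamma := \log_p \exp(P/P')$ is the common value for both groups. Conjecture \ref{conj:Isaacs-Navarro} applied to both groups equates the maximal level of $\Irr_{p'}(G)$ and $\Irr_{p'}(M)$ to $\gamma$; Conjecture \ref{conj:continuity1} then yields characters at every level $2 \leq \beta \leq \gamma$ in both groups, and no group has characters at a level exceeding $\gamma$. Hence $G$ has a $p'$-degree character of level $\alpha \geq 2$ if and only if $M$ does.

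The principal technical subtlety across these arguments is that Lemma \ref{lem:1}(ii)---which is what transports levels through the Clifford correspondence in (iv)---requires the levels involved to be at least $2$. This is exactly why all three conjectures are formulated for $\alpha \geq 2$ and why the level-$1$ anomaly noted in the remark following Theorem \ref{thm:continuity} remains outside the scope of the present method.
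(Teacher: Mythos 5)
Your argument is correct and follows essentially the same route as the paper: parts (ii) and (iii) come from specializing Conjecture \ref{conj:M<G} to $M=P$ together with the observation that linear characters of the abelian $p$-group $P/P'$ realize every level up to $\log_p\exp(P/P')$ (the paper's ``claim''), part (i) uses Conjectures \ref{conj:Isaacs-Navarro} and \ref{conj:continuity1} with a common Sylow subgroup chosen inside $M$, and part (iv) uses the level-by-level consequence of MN to pass to $\bN_G(P)$ and then Lemma \ref{lem:1} plus the abelian-$p$-group analysis. The only cosmetic differences are that you carry out the Clifford-theoretic step inside $\bN_G(P)$ (with an unnecessary Schur--Zassenhaus/Gallagher detour, since degrees over a linear character of the normal Sylow automatically divide the $p'$-index) rather than reducing further to $P$, and you re-derive MN $\Rightarrow$ Isaacs--Navarro instead of citing it.
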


\begin{proof} We begin with proving (i). Assume that $G$ has an irreducible $p'$-degree
character of $p$-rationality level $\alpha\geq 2$ and $M\leq G$ is
of $p'$-index. Let $P\in\Syl_p(M)\subseteq\Syl_p(G)$. Then
$\beta:=\log_p(\exp(P/P'))\geq \alpha$ by Conjecture
\ref{conj:Isaacs-Navarro}. By Conjecture \ref{conj:continuity1}, it
follows that $M$ possesses irreducible $p'$-degree characters of
every level from $2$ to $\beta$, implying that $M$ has one of level
$\alpha$. The if direction of Conjecture \ref{conj:M<G} is argued
similarly, with $G$ and $M$ interchanged.

To prove the remaining statements, we first claim that Conjecture
\ref{conj:continuity1} holds true when $G$ is a $p$-group. Assume
that $G$ has an irreducible $p'$-degree characters of
$p$-rationality level $\alpha\geq 2$. By the fundamental theorem of
finite Abelian groups, $G/G'$ is isomorphic to a direct product of
cyclic groups. The fact that $G/G'$ has an irreducible linear
character of level $\alpha\geq 2$ implies that at least one of those
cyclic direct factors, say $C=\langle x\rangle$, has order at least
$p^\alpha$:
\[|C|=p^e\ge p^\alpha.\] For every $2\leq\beta\leq\alpha$, the
character $x^i\mapsto \zeta^{i\cdot p^{e-\beta}}$ of $C$, where
$\zeta$ is a primitive $p^e$th root of unity, will have
$p$-rationality level precisely equal to $\beta$. We deduce that
$G/G'$ has irreducible $p'$-degree characters of every
$p$-rationality level from $2$ to $\alpha$, as desired.

Now (ii) immediately follows from the claim. Also, (iii) follows
from the claim and \cite[Theorem B]{Navarro-Tiep19}.

The fact that Conjecture \ref{conj:Isaacs-Navarro} follows from the
MN conjecture was already observed before. Therefore, to see (iv),
it is enough to show that Conjecture \ref{conj:continuity1} is a
consequence of the MN conjecture. For this, as remarked at the
beginning of this section, if the MN conjecture holds true then we
may assume that $P\in\Syl_p(G)$ is normal in $G$. Using
Lemma~\ref{lem:1}, we may further assume that $G=P$ is a $p$-group,
but Conjecture \ref{conj:continuity1} in this case is precisely the
claim above.
\end{proof}


\section{$p$-Rationality level of character values}\label{sec:3}

In this section we relate the $p$-rationality level of a character
with the level of the values of the character. We also prove a key
lemma that will be needed in the proof the main results.

\begin{definition} Let $z$ be a complex number belonging the smallest field
containing all roots of unity. The $p$-rationality level of $z$ is
defined as
\[
\lev(z)=\lev_p(z):=\log_p(c(\QQ(z))),
\]
where $c(\QQ(z))$ is the smallest positive integer $f$ such that
$z\in \QQ_f$.
\end{definition}

\begin{lemma}\label{lem:achieved} Let $\chi$ be a character of a finite group $G$. Then
\[
\lev(\chi)=\max_{g\in G}\{\lev(\chi(g))\}.
\]
\end{lemma}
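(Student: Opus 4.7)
The plan is to establish the two inequalities $\max_{g\in G}\lev(\chi(g))\leq \lev(\chi)$ and $\lev(\chi)\leq \max_{g\in G}\lev(\chi(g))$ separately, using only the definition of conductor together with the standard fact that for abelian number fields $K$ the condition $K\subseteq \QQ_n$ forces $c(K)\mid n$ (equivalently, $\QQ_a\cap\QQ_b=\QQ_{\gcd(a,b)}$).

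The first inequality I would handle by observing that every value $\chi(g)$ lies in $\QQ(\chi)\subseteq\QQ_{c(\chi)}$, so $c(\QQ(\chi(g)))$ divides $c(\chi)$; passing to $p$-parts yields $\lev(\chi(g))\leq \lev(\chi)$, and maximizing over $g\in G$ gives the desired bound.

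For the reverse inequality, set $M:=\max_{g\in G}\lev(\chi(g))$. I would split each individual conductor into its $p$-part and its $p'$-part: write $c(\QQ(\chi(g)))=p^{\lev(\chi(g))}m_g$ with $\gcd(p,m_g)=1$, so that $\chi(g)\in\QQ_{p^{\lev(\chi(g))}m_g}\subseteq\QQ_{p^M m_g}$. Setting $m:=\lcm_{g\in G}m_g$, which is coprime to $p$, every $\chi(g)$ lies in $\QQ_{p^M m}$. Consequently $\QQ(\chi)\subseteq\QQ_{p^M m}$, which forces $c(\chi)\mid p^M m$, and since $\gcd(p,m)=1$ the $p$-part $c(\chi)_p$ divides $p^M$, giving $\lev(\chi)\leq M$.

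There is no real obstacle here; the argument amounts to unwinding definitions. The only point requiring care is to isolate the $p$-part of each individual conductor $c(\QQ(\chi(g)))$ \emph{before} taking the least common multiple across $g\in G$, so that the $p$-exponent of the ambient cyclotomic field remains at $p^M$ while the $p'$-contributions from different values are absorbed into $m$ without inflating the level.
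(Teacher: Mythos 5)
Your argument is correct and is essentially the paper's proof: the first inequality follows from $\chi(g)\in\QQ(\chi)\subseteq\QQ_{c(\chi)}$, and the second from $\QQ(\chi)\subseteq\QQ_{\lcm_g c(\QQ(\chi(g)))}$, with your $p$-part/$p'$-part bookkeeping just making explicit why the $p$-part of that lcm is $p^M$.
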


\begin{proof} It is clear that $\lev(\chi(g))\leq \lev(\chi)$ for
every $g\in G$. On the other hand, as $\QQ(\chi)\subseteq
\QQ_{\lcm(c(\QQ(\chi(g))):g\in G)}$, we have
$\lev(\chi)\leq\max_{g\in G}\{\lev(\chi(g))\}$, and the result
follows.
\end{proof}

In view of Lemma \ref{lem:achieved}, we will say that the
$p$-rationality level of $\chi$ is \emph{achieved} at $g\in G$ if
$\lev(\chi)=\lev(\chi(g))$.

\begin{lemma}\label{lem:level of product character}
Let $\varphi, \psi \in\Irr_{p'}(G)$ and $\chi:=\varphi\psi$ such
that $\lev(\varphi)\leq \lev(\chi)$. If the $p$-rationality level of
$\psi$ is achieved at a $p$-element, then
\begin{enumerate}[\rm(i)]
\item  $\lev(\psi)\leq\lev(\chi)$, and

\item either $\lev(\varphi)=\lev(\chi)$ or $\lev(\psi)=\lev(\chi)$.
\end{enumerate}

\noindent In particular, if $G/\Ker(\psi)$ is a $p$-group, then the
same conclusions hold.
\end{lemma}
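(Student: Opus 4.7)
The plan is to leverage Lemma~\ref{lem:achieved} (which says the $p$-rationality level of any character is achieved on some group element) together with the classical fact that an irreducible character of $p'$-degree never vanishes on a $p$-element. The non-vanishing is standard: for $\mathfrak{p}$ a prime above $p$, every eigenvalue of $\varphi$ at a $p$-element $g_0$ is a $p$-power root of unity, hence congruent to $1$ modulo $\mathfrak{p}$, so $\varphi(g_0)\equiv\varphi(1)\not\equiv 0\pmod{\mathfrak{p}}$. I will also use the elementary compositum bound: for any two nonzero cyclotomic values $z_1,z_2$, both $z_1z_2$ and $z_1/z_2$ lie in the compositum $\QQ(z_1,z_2)$, so their $p$-rationality levels are bounded by $\max\{\lev(z_1),\lev(z_2)\}$.

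For part (i), I would pick a $p$-element $g_0$ where $\lev(\psi)=\lev(\psi(g_0))$, which exists by hypothesis. Since $\varphi(g_0)\neq 0$, I can write $\psi(g_0)=\chi(g_0)/\varphi(g_0)$; the compositum bound, together with Lemma~\ref{lem:achieved} applied to $\chi$ and $\varphi$, then yields $\lev(\psi)\leq \max\{\lev(\chi(g_0)),\lev(\varphi(g_0))\}\leq\max\{\lev(\chi),\lev(\varphi)\}=\lev(\chi)$, the last equality coming from the standing hypothesis $\lev(\varphi)\leq\lev(\chi)$. For part (ii), I would pick $h\in G$ with $\lev(\chi(h))=\lev(\chi)$ via Lemma~\ref{lem:achieved} and apply the product bound to $\chi(h)=\varphi(h)\psi(h)$, obtaining $\lev(\chi)\leq\max\{\lev(\varphi),\lev(\psi)\}$. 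Combined with the hypothesis $\lev(\varphi)\leq\lev(\chi)$ and with part (i), which both give upper bounds of $\lev(\chi)$, at least one of $\lev(\varphi)$ or $\lev(\psi)$ must equal $\lev(\chi)$.

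Finally, for the ``in particular'' clause, I would reduce to the main case by noting that when $G/\Ker(\psi)$ is a $p$-group, the $p'$-part $g_{p'}$ of any $g\in G$ maps to a $p'$-element of this $p$-group, hence to the identity; thus $g_{p'}\in\Ker(\psi)$ and $\psi(g)=\psi(g_p)$. Consequently $\lev(\psi)$ is automatically achieved at the $p$-element $g_p$, and the hypothesis of the main case is satisfied for free. The only truly non-trivial step in the whole argument is the non-vanishing $\varphi(g_0)\neq 0$; without it the division in (i) collapses and there is no starting point. Beyond that I do not anticipate any obstacle, since each remaining step is a routine application of Lemma~\ref{lem:achieved} or the compositum bound for cyclotomic fields.
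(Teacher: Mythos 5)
Your proof is correct and follows essentially the same route as the paper: the key step in both is the non-vanishing of $p'$-degree irreducible characters at $p$-elements (which you prove directly via the congruence $\varphi(g_0)\equiv\varphi(1)\pmod{\mathfrak{p}}$ rather than citing it), followed by writing $\psi(g_0)=\chi(g_0)/\varphi(g_0)$ and using that levels are subadditive under products and quotients of cyclotomic values, and the same reduction of the ``in particular'' clause via $\psi(g)=\psi(g_p)$. Your part (ii) is only a cosmetic variant of the paper's, bounding $\lev(\chi)$ pointwise at an element where it is achieved instead of via the conductors $c(\varphi)$ and $c(\psi)$.
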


\begin{proof}
Suppose that the $p$-rationality level of $\psi$ is achieved at some
$p$-element $g\in G$. Since irreducible $p'$-degree characters are
nonzero at $p$-elements (see \cite[Remark 4.1]{DPSS09}), we have
\[
\chi(g)=\varphi(g)\psi(g)\neq 0,
\]
so that $\psi(g)=\chi(g)\varphi(g)^{-1}$. Therefore,
\[
\lev(\psi)=\lev(\psi(g))\leq
\max\{\lev(\chi(g)),\lev(\varphi(g))\}\leq
\max\{\lev(\chi),\lev(\varphi)\}=\lev(\chi),
\]
proving part (i).

For (ii) we note that, if both $\lev(\varphi)$ and $\lev(\psi)$ are
smaller than $\lev(\chi)$, then
\[\QQ(\chi)=\QQ(\varphi\psi)\subseteq
\QQ_{\lcm(c(\varphi),c(\psi))}=\QQ_{p^am}\] for some $a<\lev(\chi)$
and $p\nmid m$, which is a contradiction.

Finally, assume that $G/\Ker(\psi)$ is a $p$-group and again the
level of $\psi$ is achieved at $g\in G$. Since $g\Ker(\psi)$ is a
$p$-element (of $G/\Ker(\psi)$), we have
$g\Ker(\psi)=g_p\Ker(\psi)$. Hence, $\psi(g)=\psi(g_p)$, and it
follows that the level of $\psi$ is achieved at $g_p$, a $p$-element
of $G$.
\end{proof}

It is possible that the assumption on the $p$-rationality level of
$\psi$ being achieved at a $p$-element in Lemma \ref{lem:level of
product character} is superfluous. Suppose that
$\lev(\psi)=\alpha\geq 2$. \cite[Conjecture C]{Navarro-Tiep21}
asserts that \begin{quote} \emph{the degree of the extension
$\QQ_{p^\alpha}/\QQ(\psi_P)$, where $P\in\Syl_p(G)$, is not
divisible by $p$.}\end{quote} This is equivalent to
$\lev(\QQ(\psi_P))=\alpha$, which in turn is equivalent to that
there exists some $g\in P$ such that $\lev(\psi(g))=\alpha$. In
other words, the level of $\psi$ is achieved at the $p$-element $g$.
We rephrase \cite[Conjecture C]{Navarro-Tiep21}.

\begin{conjecture}[Navarro-Tiep]\label{conj:achieved}
The $p$-rationality level higher than 1 of a $p'$-degree irreducible
character is always achieved at a $p$-element.
\end{conjecture}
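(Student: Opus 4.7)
The plan is to follow a reduction-plus-classification strategy. First, reduce via Clifford theory and Lemma \ref{lem:1} to the case where $G$ is (almost) quasisimple: given $\chi\in\Irr_{p'}(G)$ of level $\alpha\ge 2$, take a minimal normal subgroup $N$ and an irreducible constituent $\theta$ of $\chi_N$. Because the Clifford correspondence preserves character values on conjugacy classes meeting the inertia subgroup, and because Lemma \ref{lem:1}(ii) transfers the $p$-rationality level across inclusions of $p'$-index, a $p$-element witness for $\theta$ lifts to one for $\chi$. Lemma \ref{lem:level of product character} further handles the case $G/\Ker(\chi)$ a $p$-group immediately (every element is a $p$-element). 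Combined with a Fong--Reynolds peeling of $p$-solvable layers, this reduces the problem to $\chi$ faithful on a quasisimple layer.

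For the quasisimple case, I would invoke the CFSG. Alternating groups and their Schur covers, sporadic groups and their covers are handled by explicit character-table inspection via \textsf{ATLAS} and \textsf{GAP}; for these groups the Sylow $p$-subgroups have abelianizations of exponent at most $p$ in most relevant cases, so the hypothesis $\alpha\ge 2$ is often vacuous by Theorem \ref{thm:Malle-Navarro-Tiep19}, and the remaining instances are easy to verify by hand. For finite groups of Lie type $\bG^F$ in the defining characteristic $p$, the irreducible $p'$-degree characters are essentially the unipotent characters tensored with $p$-rational linear characters and are themselves $p$-rational, so $\alpha\ge 2$ does not occur.

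The substantive case is finite groups of Lie type $\bG^F$ in cross-characteristic $\ell\neq p$. Here, Theorem \ref{thm:technical} already pins down the level under its centralizer hypothesis: if $\chi$ lies in the Lusztig series $\EC(\bG^F,s)$ and the semisimple $s$ satisfies that hypothesis, then $\lev_p(\chi)=\log_p|s_p|$. To produce a witness, the plan is to choose a $p$-element $u$ generating a cyclic subgroup of a Sylow $p$-subgroup of an $F$-stable maximal torus $\bT^F$ of $\bG^F$ dual to a torus containing $s$, and to compute $\chi(u)$ via the Jordan-decomposition character formula. Because $u$ is a $p$-element while $s$ is $p$-regular, the formula for $\chi(u)$ reduces to a $\QQ$-linear combination of values of the dual-torus linear character $\widehat{s}$ at translates of $u$; the factor $\widehat{s}(u)$ is a primitive $|s_p|$-th root of unity, and for sufficiently generic $u$ the sum realizes that level exactly.

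The principal obstacle is precisely this non-cancellation step: controlling the Jordan-decomposition sum well enough to prevent a drop in level relies crucially on the centralizer hypothesis of Theorem \ref{thm:technical}, which is calibrated so that the contribution from $s_p$ factors cleanly through $\mathbf{C}_{\bG^*}(s_p)$. Outside that hypothesis---for disconnected centralizers, or when $s$ sits in Lusztig series requiring nontrivial Harish-Chandra structure---the strategy must be augmented with Lusztig induction from a proper Levi subgroup together with an induction on the semisimple rank of $\bG$. This is the gap that currently keeps the conjecture open beyond the cases settled in this paper and \cite{Hung22}.
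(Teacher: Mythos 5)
The statement you were asked about is not a theorem of the paper at all: it is Conjecture \ref{conj:achieved}, the paper's rephrasing of \cite[Conjecture C]{Navarro-Tiep21}, and the paper offers no proof of it. It only records that Theorem \ref{thm:technical} has been used (in the work in progress \cite{Hung22}) to confirm it for linear and unitary groups. Your text is likewise not a proof; it is a research plan, and you concede this yourself in the final paragraph, where you identify the non-cancellation step in the Jordan-decomposition/Lusztig-induction computation (and the case of disconnected centralizers) as "the gap that currently keeps the conjecture open." A proposal that ends by acknowledging the decisive step is unresolved cannot be accepted as a proof of the conjecture; at best it sketches why the cases covered by Theorem \ref{thm:technical} are tractable, which is essentially what the paper already says.

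Beyond that global point, the reduction step you describe is not justified. Lemma \ref{lem:1} controls the \emph{size} of the $p$-rationality level across normal subgroups of $p'$-index and below $p'$-degree characters; it says nothing about \emph{where} the level is achieved. If $\theta$ is a constituent of $\chi_N$ whose level is achieved at a $p$-element $g\in N$, the value $\chi(g)$ is a multiple of a sum of values $\theta^{x_i}(g)$ over $G$-conjugates of $\theta$, and this sum can have strictly smaller level than $\lev(\theta(g))$ -- exactly the kind of cancellation you admit you cannot control in the Lie-type case. Conversely, the level of $\chi$ may only be achieved at elements outside $N$, so a witness for $\theta$ does not "lift" to a witness for $\chi$ by any argument given. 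No reduction of Conjecture \ref{conj:achieved} to (almost) quasisimple groups is established in this paper or claimed in the literature it cites, so the first stage of your plan is itself an open problem. (Minor point: in defining characteristic the $p'$-degree characters are the semisimple characters, not unipotent characters twisted by linear ones; the correct statement you need there -- that they have level at most $1$ -- is what the paper extracts from \cite{Malle19}, but your justification for it is off.)
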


Let us mention that this rephrasing together with Theorem
\ref{thm:technical} have allowed us to confirm the conjecture for
linear and unitary groups $G$. We in fact have shown that if the
level of $\psi\in\Irr_{p'}(G)$ is achieved at some $g\in G$, then it
is also achieved at $g_p$, where $g_p$ is the $p$-part of $g$ (work
in progress \cite{Hung22}).


\section{2-Rationality of odd-degree
characters}\label{sec:odd-degree}

In this section we prove Theorems \ref{thm:main1} and
\ref{thm:continuity}, using the result of the previous section and
assuming a continuity result (but with automorphisms) for finite
simple groups (Theorem \ref{thm:continuity2}).

We begin with some preliminary results.

\begin{lemma}\label{lem:exp}
Let $N$ be a normal subgroup of $G$ and $p$ a prime. Let $P_N, P_G$,
$P_{G/N}$ be respectively Sylow $p$-subgroups of $N, G$, and $G/N$.
Then
\begin{enumerate}[\rm(i)]

\item $\exp(P_{G/N}/P'_{G/N})\leq \exp(P_G/P'_G)$.

\item $\exp(P_G/P'_G) \leq \exp(P_N/P'_N) \cdot \exp(P_{G/N}/P'_{G/N})$.
\end{enumerate}
\end{lemma}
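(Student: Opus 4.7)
Since any two Sylow $p$-subgroups of a given finite group are conjugate, the quantities $\exp(P_N/P_N')$, $\exp(P_G/P_G')$ and $\exp(P_{G/N}/P_{G/N}')$ depend only on $N$, $G$ and $G/N$, not on the specific choices. So the plan is to choose the Sylow subgroups compatibly: fix $P_G\in\Syl_p(G)$, set $P_N:=P_G\cap N$ (which lies in $\Syl_p(N)$), and take $P_{G/N}:=P_GN/N\cong P_G/P_N$, which lies in $\Syl_p(G/N)$.

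For part (i), the natural surjection $\pi\colon P_G\twoheadrightarrow P_G/P_N=P_{G/N}$ sends $P_G'$ into $P_{G/N}'$, so it induces a surjection of abelian groups
\[
\bar\pi\colon P_G/P_G'\twoheadrightarrow P_{G/N}/P_{G/N}'.
\]
The exponent of a quotient of an abelian group divides the exponent of the group, giving (i).

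For part (ii), set $a:=\exp(P_N/P_N')$ and $b:=\exp(P_{G/N}/P_{G/N}')$. Using the third isomorphism theorem,
\[
P_{G/N}/P_{G/N}'\;\cong\;(P_G/P_N)\big/(P_G'P_N/P_N)\;\cong\;P_G/(P_G'P_N),
\]
so for every $x\in P_G$ we have $x^b\in P_G'P_N$. Next, the second isomorphism theorem gives
\[
P_G'P_N/P_G'\;\cong\;P_N/(P_N\cap P_G'),
\]
and since $P_N'\subseteq P_G'\cap P_N$, this group is a quotient of $P_N/P_N'$, hence has exponent dividing $a$. Writing $x^b=y z$ with $y\in P_G'$ and $z\in P_N$, we get $(xP_G')^b=zP_G'$, and then $(xP_G')^{ab}=(zP_G')^a=P_G'$. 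Thus $\exp(P_G/P_G')$ divides $ab$, which is (ii).

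Neither step presents a real obstacle; the only thing to get right is the compatible choice of Sylow subgroups so that the three commutator subgroups interact cleanly via the two standard isomorphism theorems. I would present the argument in the order above.
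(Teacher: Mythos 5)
Your argument is correct and follows essentially the same route as the paper: choose the Sylow subgroups compatibly, identify $P_{G/N}/P_{G/N}'\cong P_G/P_G'P_N$ for (i), and for (ii) exploit the normal subgroup $P_NP_G'/P_G'$ of $P_G/P_G'$ (a quotient of $P_N/P_N'$) together with its quotient $P_G/P_G'P_N$; the paper merely quotes the general fact $\exp(Y)\leq\exp(X)\cdot\exp(Y/X)$ for $X\nor Y$ where you unwind it elementwise.
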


\begin{proof} We may assume without loss that $P_N\leq P_G$ and
$P_{G/N}=P_G/P_N$. We then have $P_{G/N}/P'_{G/N} \cong P_G/P'_GP_N$
and (i) is clear. To see (ii), note that if $X\nor Y$ then
$\exp(Y)\leq \exp(X)\cdot\exp(Y/X)$. Therefore,
\begin{align*}
\exp(P_G/P'_G) &\leq
\exp\left(\frac{P_G/P'_G}{P_NP'_G/P'_G}\right)\cdot\exp(P_NP'_G/P'_G)\\
&=\exp(P_G/P'_GP_N)\cdot\exp(P_N/(P'_G\cap P_N))\\
&\leq \exp(P_{G/N}/P'_{G/N})\cdot\exp(P_N/P'_N),
\end{align*}
as desired.
\end{proof}

Recall that, for a character $\chi$ of $G$, the determinantal order
$o(\chi)$ of $\chi$ is the order of $\det \chi$ in the group of
linear characters of $G$.

\begin{lemma}\label{lem:canonical extension}
Let $N\nor G$ and $\theta\in\Irr(N)$ is $G$-invariant with
$\gcd(o(\theta)\theta(1),[G:N])=1$. Then $\theta$ has a unique
extension $\widehat{\theta}\in\Irr(G)$ with
$\gcd(o(\widehat{\theta}),[G:N])=1$. Furthermore,
$\QQ(\theta)=\QQ(\widehat{\theta})$.
\end{lemma}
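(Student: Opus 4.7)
The plan is to invoke the classical theory of canonical extensions and then exploit uniqueness under the action of Galois automorphisms.

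First, the existence and uniqueness of an extension $\widehat{\theta}\in\Irr(G)$ of $\theta$ with $\gcd(o(\widehat{\theta}),[G:N])=1$ is the standard \emph{canonical extension} theorem of Gallagher--Isaacs, available for instance as \cite[Corollary 8.16]{Isaacs1}. Under the hypothesis $\gcd(o(\theta)\theta(1),[G:N])=1$, that result produces a unique such $\widehat{\theta}$. So there is nothing to do for the first sentence other than quote the reference.

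For the second sentence, the inclusion $\QQ(\widehat{\theta})\supseteq\QQ(\theta)$ is immediate, since $\theta=\widehat{\theta}_N$ forces each value of $\theta$ to lie in $\QQ(\widehat{\theta})$. For the reverse inclusion, let $\sigma\in\Gal(\QQ_{|G|}/\QQ(\theta))$ be arbitrary. Galois action commutes with restriction, so $(\sigma\widehat{\theta})_N=\sigma(\widehat{\theta}_N)=\sigma\theta=\theta$, i.e., $\sigma\widehat{\theta}$ is again an extension of $\theta$ to $G$. Moreover, Galois action commutes with taking determinants, and it permutes the linear characters of $G$ preserving their multiplicative order; hence $o(\sigma\widehat{\theta})=o(\widehat{\theta})$, which is coprime to $[G:N]$. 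By the uniqueness clause already proved, $\sigma\widehat{\theta}=\widehat{\theta}$. Since $\sigma$ was arbitrary, $\widehat{\theta}$ is fixed by $\Gal(\QQ_{|G|}/\QQ(\theta))$, whence $\QQ(\widehat{\theta})\subseteq\QQ(\theta)$ by Galois theory.

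This gives the claimed equality $\QQ(\theta)=\QQ(\widehat{\theta})$. No step here looks to be a genuine obstacle: the only slightly delicate point is observing that a Galois automorphism preserves determinantal order, but this is a formal consequence of the fact that $\sigma(\det\widehat{\theta})=\det(\sigma\widehat{\theta})$ together with the Galois group permuting roots of unity of a given order among themselves.
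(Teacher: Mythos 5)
Your proof is correct and follows the standard route: the paper itself gives no argument but simply cites \cite[Corollaries 6.2 and 6.4]{Navarro18} (and \cite[Corollary 6.28]{Isaacs1} for $G/N$ solvable), and your Galois argument --- $\sigma\widehat{\theta}$ is again an extension of $\theta$ with $\det(\sigma\widehat{\theta})=\sigma(\det\widehat{\theta})$ of the same order, so uniqueness forces $\sigma\widehat{\theta}=\widehat{\theta}$ for all $\sigma\in\Gal(\QQ_{|G|}/\QQ(\theta))$ --- is exactly the standard proof behind the field-of-values statement in those references. No gaps.
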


\begin{proof}
This is Corollaries 6.2 and 6.4 of \cite{Navarro18}. See also
\cite[Corollary 6.28]{Isaacs1} for the case where $G/N$ is solvable.
\end{proof}

\begin{lemma}\label{lem:2} Let $N\nor G$. Let $\theta\in\Irr_{p'}(N)$ with $\lev(\theta)=\alpha\geq 2$.
Suppose $\theta$ is $P$-invariant for some $P\in\Syl_p(G)$ and
$p\nmid o(\theta)$. Then there exists $\chi\in\Irr_{p'}(G)$ lying
over $\theta$ such that $\lev(\chi)=\alpha$.
\end{lemma}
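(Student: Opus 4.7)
The plan is to construct $\chi$ by first canonically extending $\theta$ to $PN$ and then extracting a $\sigma_\alpha$-invariant constituent of the induced character to $G$.

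First, apply Lemma~\ref{lem:canonical extension} to $N \nor PN$. Since $\theta$ is both $N$-invariant and $P$-invariant, it is $PN$-invariant, and $\gcd(o(\theta)\theta(1), [PN:N]) = 1$ because $o(\theta)$ and $\theta(1)$ are $p'$-numbers while $[PN:N] = |P|/|P \cap N|$ is a $p$-power (as $P \cap N \in \Syl_p(N)$). This yields a canonical extension $\widetilde{\theta} \in \Irr(PN)$ with $p \nmid o(\widetilde{\theta})$ and $\QQ(\widetilde{\theta}) = \QQ(\theta)$, so $\lev(\widetilde{\theta}) = \alpha$.

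Next, observe that $[G:PN] = [G:N]_{p'}$ is coprime to $p$, so the induced character $\widetilde{\theta}^G$ has $p'$-degree $[G:PN]\theta(1)$. Since induction cannot enlarge the field of values, $\QQ(\widetilde{\theta}^G) \subseteq \QQ(\widetilde{\theta}) = \QQ(\theta) \subseteq \QQ_{p^\alpha m}$ for some integer $m$ coprime to $p$. Hence $\widetilde{\theta}^G$ is $\sigma_\alpha$-invariant, and $\sigma_\alpha$ permutes its irreducible constituents preserving both multiplicities and degrees, with orbits of $p$-power size. Decomposing $\widetilde{\theta}^G$ by $\sigma_\alpha$-orbits, the fact that $\widetilde{\theta}^G(1)$ is a $p'$-number forces at least one orbit to be a singleton $\{\chi\}$ with $\chi(1)$ and the multiplicity of $\chi$ both coprime to $p$. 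Such a $\chi$ lies in $\Irr_{p'}(G|\theta)$ and is $\sigma_\alpha$-fixed, so $\lev(\chi) \leq \alpha$.

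For the reverse inequality, note that $\chi$ cannot be $p$-rational: the $p$-group $\Gal(\QQ_{|G|}/\QQ_{|G|_{p'}})$ would fix $\chi$ and hence permute the $G$-conjugates of $\theta$ occurring in $\chi|_N$ (by Clifford's theorem), a set of size $[G:T]$ coprime to $p$ since $P \leq T := G_\theta$. Every such conjugate would then be individually fixed, forcing $\theta$ to be $p$-rational and contradicting $\lev(\theta) = \alpha \geq 2$. Therefore $\lev(\chi) \geq 1$, and Lemma~\ref{lem:1}(iii) yields $\alpha = \lev(\theta) \leq \lev(\chi)$, giving equality. The main obstacle is securing the upper bound $\lev(\chi) \leq \alpha$; this is what the averaging argument on $\sigma_\alpha$-orbits buys us, and it relies crucially on the canonical extension preserving the field of values (so that $\widetilde{\theta}^G$ is $\sigma_\alpha$-stable) together with the fact that $[G:PN]$ is coprime to $p$.
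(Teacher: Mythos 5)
Your proposal follows essentially the same route as the paper: canonically extend $\theta$ to $NP$ via Lemma~\ref{lem:canonical extension} (possible since $o(\theta)\theta(1)$ is a $p'$-number and $[NP:N]$ is a $p$-power), induce to $G$, use the $p$-power Galois action on the constituents of the induced $p'$-degree character to find a $p'$-degree constituent $\chi$ with $\lev(\chi)\leq\alpha$ (your $\sigma_\alpha$ generates $\Gal(\QQ_{|G|}/\QQ_{p^\alpha|G|_{p'}})$ precisely because $\alpha\geq 2$), and then get $\lev(\chi)\geq\alpha$ from Lemma~\ref{lem:1}(iii) after ruling out the low-level case. This is the paper's proof in all essentials.

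One step needs repair for odd $p$: in the final paragraph you call $\Gal(\QQ_{|G|}/\QQ_{|G|_{p'}})$ a $p$-group, but its order is $p^{a-1}(p-1)$ where $|G|_p=p^a$, so the coprimality of the number of $G$-conjugates of $\theta$ to $p$ does not by itself produce a fixed point of this whole group, and the claim that \emph{every} conjugate is then fixed is also not what the counting gives (that part is rescued only because the Galois action commutes with conjugation, so fixing one conjugate fixes $\theta$). The paper avoids this by working with $\Gal(\QQ_{|G|}/\QQ_{p|G|_{p'}})$, which genuinely is a $p$-group (it is the Sylow $p$-subgroup of your group): if $\lev(\chi)=0$ then $\chi$ is fixed by this smaller group, the orbit count gives a fixed conjugate of $\theta$, hence $\theta$ itself is fixed, so $\lev(\theta)\leq 1$ --- still contradicting $\alpha\geq 2$. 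With that one-line substitution (note you only get $\lev(\theta)\leq 1$, not $p$-rationality, which is why the hypothesis $\alpha\geq 2$ rather than $\alpha\geq 1$ is what saves the argument), your proof is correct and coincides with the paper's.
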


\begin{proof}
By Lemma \ref{lem:canonical extension}, $\theta$ has a unique
extension to $NP$, say $\widehat{\theta}\in\Irr_{p'}(NP)$, such that
$p\nmid o(\widehat{\theta})$. Furthermore,
$\QQ(\widehat{\theta})=\QQ(\theta)$. Since
$\Gal(\QQ_{|G|}/\QQ_{p^\alpha |G|_{p'}})$ fixes $\widehat{\theta}$
as well as $\widehat{\theta}^G$, it permutes the irreducible
constituents of $\widehat{\theta}^G$. As
$\widehat{\theta}^G(1)=\theta(1)[G:NP]$ is not divisible by $p$, it
follows that there is some irreducible constituent $\chi$ of
$\widehat{\theta}^G$ that is $\Gal(\QQ_{|G|}/\QQ_{p^\alpha
|G|_{p'}})$-invariant and $p'$-degree. In particular,
$\lev(\chi)\leq \alpha$.

If $\lev(\chi)=0$ then $\chi$ would be $\Gal(\QQ_{|G|}/\QQ_{p
|G|_{p'}})$-fixed, which implies that $\theta$, being a constituent
of $\chi_N$, is $\Gal(\QQ_{|G|}/\QQ_{p |G|_{p'}})$-fixed, violating
the hypothesis that $\lev(\theta)\geq 2$. Therefore we have
$\lev(\chi)\geq 1$. Lemma \ref{lem:1}(iii) now implies that
$\alpha=\lev(\theta)\leq \lev(\chi)$, and hence $\lev(\chi)=\alpha$,
as required.
\end{proof}

\begin{lemma}\label{lem:NTT} Let $P$ be a finite $p$-group acting on a direct
product $G=G_1\times G_2\times\cdots\times G_n$ such that the $G_i$s
are transitively permuted by $P$ and a Sylow $p$-subgroup of $G$ is
invariant under $P$. Let $Q$ be the stabilizer of $G_1$ in $P$ and
$\{x_1,x_2,...,x_t\}$ be a transversal for the right cosets of $Q$
in $P$ with $G_1^{x_i}=G_i$. Then $\theta\in\Irr(G_1)$ is
$Q$-invariant if and only if
$\theta^{x_1}\times\cdots\times\theta^{x_t}$ is $P$-invariant.
\end{lemma}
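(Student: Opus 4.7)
The plan is to set up a cocycle-like description of the $P$-action on the factor set $\{G_1,\dots,G_n\}$, and then reduce both implications to direct pointwise evaluations of $\chi := \theta^{x_1}\times\cdots\times\theta^{x_t}$.

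First I would normalize the choice of transversal so that $x_1 = 1$, which is legitimate because $1\in Q$. For each $y\in P$ the induced action on $\{G_1,\dots,G_n\}$ is a permutation $\pi_y$, and tracing through the definitions yields unique $q_i(y)\in Q$ with
\[
x_i y \;=\; q_i(y)\,x_{\pi_y(i)} \qquad (1\le i\le n).
\]
This single identity encodes both how $y$ permutes the factors and how it acts modulo the stabilizer $Q$.

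For the forward implication, suppose $\theta$ is $Q$-invariant. Take $g = g_1 g_2\cdots g_n$ with $g_i\in G_i$ and $y\in P$. Since $g_i^y\in G_{\pi_y(i)}$, reindexing by $j=\pi_y(i)$ gives
\[
\chi(g^y) \;=\; \prod_j \theta^{x_j}\!\bigl((g^y)_j\bigr) \;=\; \prod_i \theta^{x_{\pi_y(i)}}\!\bigl(g_i^y\bigr) \;=\; \prod_i \theta\!\bigl(g_i^{y\,x_{\pi_y(i)}^{-1}}\bigr) \;=\; \prod_i \theta\!\bigl((g_i^{x_i^{-1}})^{q_i(y)}\bigr),
\]
where the last step uses $y\,x_{\pi_y(i)}^{-1} = x_i^{-1} q_i(y)$. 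Since $g_i^{x_i^{-1}}\in G_1$ and $\theta$ is $Q$-invariant, each factor collapses to $\theta(g_i^{x_i^{-1}}) = \theta^{x_i}(g_i)$, so $\chi(g^y) = \chi(g)$, as wanted.

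For the converse, assume $\chi$ is $P$-invariant and let $q\in Q$. Evaluating $\chi$ on $g=(h,1,\dots,1)$ with $h\in G_1$: since $q$ fixes $G_1$ setwise and sends the identity element of every other factor to an identity, $g^q = (h^q,1,\dots,1)$. Because $x_1 = 1$, the identity $\chi(g^q) = \chi(g)$ reduces to $\theta(h^q) = \theta(h)$ for all $h\in G_1$, i.e.\ $\theta^q = \theta$. As $q\in Q$ was arbitrary, $\theta$ is $Q$-invariant.

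The argument is essentially bookkeeping, and the only mildly nontrivial ingredient is the cocycle identity $x_i y = q_i(y)\,x_{\pi_y(i)}$; no deeper difficulty is involved. I note that the hypothesis that some Sylow $p$-subgroup of $G$ is $P$-invariant plays no role in this particular proof and is presumably included only to match the ambient setting in which the lemma is invoked later in Section~\ref{sec:odd-degree}.
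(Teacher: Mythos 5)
Your argument is correct, but it takes a genuinely different route from the paper: the paper does not prove the lemma directly at all, it simply quotes it as a special case of \cite[Lemma 4.1]{Navarro-Tiep-Turull}, whereas you give a self-contained verification built on the identity $x_i y = q_i(y)\,x_{\pi_y(i)}$. Your forward direction is exactly the right bookkeeping, and in the converse the two sides of $\chi(g^q)=\chi(g)$ share the nonzero factor $\theta(1)^{n-1}$, which cancels to give $\theta(h^q)=\theta(h)$ (worth saying explicitly). One small point of care: normalizing $x_1=1$ changes the character under discussion, since with the given transversal the first factor is $\theta^{x_1}$ rather than $\theta$; but $G_1^{x_1}=G_1$ forces $x_1\in Q$, so running your evaluation with the original transversal shows $\theta^{x_1}$ is $Q$-invariant, and then $\theta=(\theta^{x_1})^{x_1^{-1}}=\theta^{x_1}$ is $Q$-invariant too --- a one-line remark closes this. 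Your observation that the hypothesis on a $P$-invariant Sylow $p$-subgroup of $G$ is never used is also accurate: it is inherited from the setting of \cite{Navarro-Tiep-Turull}, where the lemma sits inside a character correspondence for which that hypothesis does matter, and from the application in Section \ref{sec:odd-degree}. What your approach buys is self-containedness and this transparency about which hypotheses are really needed; what the paper's citation buys is brevity and a pointer to the more general statement of which this is a special case.
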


\begin{proof}
This is a special case of \cite[Lemma 4.1]{Navarro-Tiep-Turull}.
\end{proof}

We need the following continuity result for almost simple groups,
whose proof is delayed until the next section, in order to prove
Theorems \ref{thm:main1} and \ref{thm:continuity}.

\begin{theorem}\label{thm:continuity2}
Let $p=2$, $S$ a finite nonabelian simple group, and $S\nor X\leq
\Aut(S)$ an almost simple group such that $X/S$ is a $p$-group. If
$S$ has an $X$-invariant irreducible $p'$-degree character of
$p$-rationality level $\alpha\geq 2$, then $S$ has $X$-invariant
irreducible $p'$-degree characters of every level from $2$ to
$\alpha$.
\end{theorem}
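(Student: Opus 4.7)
The plan is to proceed by invoking the classification of finite simple groups and treating each family separately, reducing everything, whenever possible, to a semisimple-element construction in the dual group. At a high level, given the $X$-invariant $2'$-degree character $\chi$ of $S$ with $\lev(\chi)=\alpha\geq 2$, I want to produce, for each $\beta$ with $2\leq \beta<\alpha$, an $X$-invariant $\chi_\beta\in\Irr_{2'}(S)$ with $\lev(\chi_\beta)=\beta$.

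First I would dispatch the easy cases. For sporadic groups and their covers/automorphism groups, a direct inspection of the ATLAS (or GAP character tables) suffices since the possible $2$-rationality levels are small and explicit. For alternating groups $\Al_n$, irreducible characters are labeled by partitions and their fields of values are explicitly computable in terms of $2$-cores/$2$-quotients; combined with Clifford theory over $\Sy_n$, constructing intermediate-level characters reduces to a combinatorial manipulation of self-conjugate partitions. Groups of Lie type in defining characteristic $2$ can be handled using the fact that their $2'$-degree characters are essentially the semisimple characters (up to the Steinberg) and a careful look at the small-rank cases.

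The main substance is the case of simple groups of Lie type $S=G^F/Z$ with $G$ reductive and defining characteristic odd. Using the technical result (Theorem~\ref{thm:technical}) that the $2$-rationality level of $\chi$ in the Lusztig series $\mathcal{E}(G^F,[s])$ equals $\lev(s_2)$ under suitable centralizer hypotheses, I would start with a representative $s$ of the semisimple class attached to the given $\chi$ and write $s=s_2\cdot s_{2'}$. If $s_2$ has order $2^\alpha$ in some maximal torus $T^F$, set
\[
t_\beta := s_2^{2^{\alpha-\beta}}\cdot s_{2'}, \qquad 2\leq\beta\leq\alpha,
\]
so that $(t_\beta)_2=s_2^{2^{\alpha-\beta}}$ has order exactly $2^\beta$ while the $2'$-part, and hence (after a check) the relevant piece of the centralizer that controls the $2'$-degree condition, is preserved. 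Choosing an appropriate $\chi_\beta$ in $\mathcal{E}(G^F,[t_\beta])$ then yields the desired level by Theorem~\ref{thm:technical}. For $X$-invariance: any automorphism $\sigma$ in $X$ induced by a diagonal/field/graph automorphism of $G$ acts on the Lusztig parametrization, the Jordan decomposition is canonical under such $\sigma$, and powers commute with the action, so $[s]^\sigma=[s]$ forces $[t_\beta]^\sigma=[t_\beta]$ and the character can be chosen $X$-stable inside its Lusztig series (using Clifford theory between $G^F$ and its ``diagonal'' extension, which is where passing from $S$ to $X$-invariant characters gets its real work).

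The hardest step, which explains why Sections~\ref{sec:linear-unitary} and~\ref{sec:E6} are needed, is making sure that $\chi_\beta$ exists with the required $2'$-degree \emph{and} remains invariant under the full $2$-group $X/S$ of outer automorphisms. For type $A$ (linear and unitary groups) the semisimple classes are explicit enough to control torus types, centralizers, and the action of diagonal/field/graph automorphisms; one works inside a suitable maximal torus $T^F$ of the dual group where $s$ lives and arranges $s_{2'}$ so that $C_{G^*}(t_\beta)$ has connected component whose $2'$-degree constraint survives. Type $E_6$ is singled out because its diagonal automorphism group contributes an extra $\ZZ/3\ZZ$ factor (absent here at $p=2$, but the graph automorphism interacts nontrivially with the center), so the centralizer condition in Theorem~\ref{thm:technical} has to be verified by hand for the particular torus types that support $2$-elements of large order. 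The remaining exceptional and classical groups admit a uniform treatment once the structure of $2$-elements in maximal tori and their centralizers is read off from standard tables, giving the theorem.
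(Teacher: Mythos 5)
Your overall skeleton (CFSG, dispatch the small cases, then for Lie type in odd characteristic power down the semisimple label $s$ and read off the level from Theorem~\ref{thm:technical}) matches the paper's strategy for $E_6^\epsilon(q)$, where the paper indeed takes $s^{2^i}$ and uses the unique semisimple character of each series. But there is a genuine gap in the case that carries all the weight, $S=\PSL_n^\epsilon(q)$ with $q\equiv\epsilon\pmod 4$ and $n$ not a $2$-power (every other family, including all the classical and exceptional groups you propose to treat ``uniformly from tables,'' is vacuous because $P/P'$ is elementary abelian, see Proposition~\ref{prop:linhtinh}). For $\SL_n^\epsilon$ and $\PSL_n^\epsilon$ the conclusion of Theorem~\ref{thm:technical} simply does not apply to $S$ itself: the level of a character of $\SL$ restricted from $\EC(\GL_n^\epsilon(q),s)$ is not $\log_2\ord(s_2)$ but the \emph{minimum} of $\log_2\ord((ts)_2)$ over central translates $t\in\bZ(\GL_n^\epsilon(q))$ (Proposition~\ref{prop:technical2}(ii)), because all the $\GL$-characters above a fixed $\SL$-character lie in the series of the translates $ts$. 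So after you replace $s$ by $t_\beta=s_2^{2^{\alpha-\beta}}s_{2'}$ you must still prove that this minimum does not drop below $\beta$; the paper does this by an explicit computation in $C_{q-\epsilon}$ (step (b) of the proof of Proposition~\ref{prop:SL}, distinguishing whether the central element is a square), and your proposal contains no substitute for it. Note also that the paper does not keep the unipotent labels $\lambda_i$: it passes to the semisimple character $S(s_1^2,(n_1))\circ\cdots\circ S(s_k^2,(n_k))$, which is what makes the rest of the argument work.

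The second weak point is $X$-invariance. Saying the character ``can be chosen $X$-stable inside its Lusztig series by Clifford theory'' is exactly the part that needs an argument: invariance of the series $\EC(G,[t_\beta])$ under $\varphi$ does not by itself give a $\varphi$-invariant $2'$-degree member, and for $\SL_n^\epsilon$ the series is not a single $\Aut$-orbit in any obvious way. The paper resolves this by construction: since the new label has connected centralizer (after the regular embedding), the series contains a \emph{unique} semisimple character (\cite[Corollary 2.6.18]{GM20}, respectively uniqueness in the $E_6$ case), which is therefore automatically $X$-invariant, odd-degree, and trivial on $\bZ(G)$ by Lemma~\ref{lem:restriction-center}. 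Without this (or an equivalent device), your construction produces characters of the right level for $\GL/\PGL$ but neither pins down their behaviour on the center nor their invariance under the full $2$-group $X/S$, which is precisely what Theorem~\ref{thm:continuity2} requires. The easy cases (sporadic, alternating, defining characteristic $2$) are fine in spirit, though the efficient route is the paper's observation that all odd-degree characters there are almost $2$-rational, so the statement is vacuous rather than something to be constructed combinatorially.
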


We can now prove Theorem \ref{thm:continuity}. Note that Theorem
\ref{thm:main1} follows as a consequence of Theorems
\ref{thm:continuity} and \ref{thm:Malle-Navarro-Tiep19}.

\begin{theorem}\label{thm:continuity2-repeated}
Let $p=2$, $G$ a finite group and $P\in\Syl_p(G)$. Suppose that
$\log_p(\exp(P/P'))=\alpha\geq 2$. Then $G$ has irreducible
$p'$-degree characters of every $p$-rationality level from $2$ to
$\alpha$.
\end{theorem}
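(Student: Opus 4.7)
The plan is to prove the theorem by induction on $|G|$, with the structural backbone coming from Theorem \ref{thm:continuity2} on almost simple groups. By Theorem \ref{thm:Malle-Navarro-Tiep19} we already have a $p'$-degree character of level exactly $\alpha$, so what remains is, for each $2 \le \beta < \alpha$, producing some $\chi \in \Irr_{p'}(G)$ with $\lev(\chi) = \beta$.

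First I would kill the normal $p'$-core: if $N := \mathbf{O}_{p'}(G) \neq 1$, a Sylow $p$-subgroup of $G/N$ is isomorphic to $P$, so $\exp(P/P')$ is unchanged, and the inductive hypothesis together with inflation yields the required characters. Hence we may assume $\mathbf{O}_{p'}(G)=1$. Next, take a minimal normal subgroup $N$ of $G$; it is either elementary abelian of exponent $p$ or a direct product $S_1 \times \cdots \times S_n$ of isomorphic non-abelian simple groups. When $N$ is an elementary abelian $p$-group, Lemma \ref{lem:exp} shows that either $\exp(P_{G/N}/P'_{G/N}) = p^\alpha$, in which case induction on $G/N$ plus inflation finishes, or the exponent drops by at most a factor of $p$, and one uses induction on $G/N$ to supply levels $2,\ldots,\alpha-1$ while a level-$\alpha$ character exists by Theorem \ref{thm:Malle-Navarro-Tiep19}.

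The heart of the argument is the case $N = S_1 \times \cdots \times S_n$. Here I would let $X$ denote the image of $\bN_G(S_1)$ in $\Aut(S_1)$; then $S := S_1 \hookrightarrow X$ is almost simple, and by replacing $X$ with the preimage in it of a Sylow $p$-subgroup of $X/S$ we may assume $X/S$ is a $p$-group. A Sylow $p$-analysis of $G$ combined with Theorem \ref{thm:Malle-Navarro-Tiep19} applied to $X$ shows that $S$ carries an $X$-invariant $p'$-degree character of level $\alpha$. Theorem \ref{thm:continuity2} then supplies, for every $2 \le \beta \le \alpha$, an $X$-invariant $\theta_\beta \in \Irr_{p'}(S_1)$ with $\lev(\theta_\beta) = \beta$. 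Using Lemma \ref{lem:NTT} I would assemble the $P$-invariant product $\Theta_\beta := \theta_\beta^{x_1} \times \cdots \times \theta_\beta^{x_n} \in \Irr_{p'}(N)$, and since each $S_i$ is perfect we have $p \nmid o(\Theta_\beta)$, so Lemma \ref{lem:2} delivers some $\chi_\beta \in \Irr_{p'}(G)$ lying over $\Theta_\beta$ with $\lev(\chi_\beta) = \beta$.

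The main obstacle I anticipate is calibrating levels between $G$ and the simple sections: one needs $S$ itself to already possess an $X$-invariant $p'$-degree character of level $\alpha$, not merely some smaller level. When the non-abelian socle does not fully account for $\exp(P/P') = p^\alpha$, the shortfall must be absorbed by the $p$-local part of $G$, and this forces one to decompose $P/P'$ into contributions from distinct minimal normal subgroups and to iterate. Resolving this interaction --- essentially showing that the Malle--Navarro--Tiep equality descends from $G$ to a suitable almost-simple section --- is the step where the classification-based input of Theorem \ref{thm:continuity2} is genuinely needed, and it is also where the restriction to $p=2$ enters through Theorem \ref{thm:Malle-Navarro-Tiep19}.
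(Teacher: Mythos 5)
Your overall architecture (induction, abelian versus nonabelian minimal normal subgroup, Lemma \ref{lem:NTT} to build $P$-invariant characters of the socle, Lemma \ref{lem:2} to lift them) matches the paper, but the central step of the nonabelian case is asserted rather than proved, and as stated it is false. You claim that ``a Sylow $p$-analysis of $G$ combined with Theorem \ref{thm:Malle-Navarro-Tiep19} applied to $X$ shows that $S$ carries an $X$-invariant $p'$-degree character of level $\alpha$.'' There is no such implication: the exponent $p^\alpha=\exp(P/P')$ need not be witnessed inside $P\cap N$ at all. For instance, if $G=S\times C_{2^\alpha}$ (or $G=S\rtimes C$ with $C$ a cyclic $2$-group of field automorphisms) where every odd-degree character of $S$ is $2$-rational, then $\alpha$ can be arbitrarily large while $S$ has no character of level $\geq 2$ whatsoever; the level-$\alpha$ characters of $G$ live entirely ``above'' the socle. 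You do flag exactly this as the main obstacle, but your proposed fix (``decompose $P/P'$ into contributions from distinct minimal normal subgroups and iterate'') is not an argument, and the difficulty already occurs when $G$ has a unique minimal normal subgroup, so it cannot be resolved by switching to another one. The paper closes this gap by passing to $H=NP$, restricting a level-$\alpha$ character $\chi$ of $H$ irreducibly to $N$ (Isaacs, Corollary 11.29), writing $\chi=\widehat{\eta}\sigma$ via Lemma \ref{lem:canonical extension} and Gallagher, and invoking Lemma \ref{lem:level of product character} (whose $p$-element hypothesis is available precisely because $\sigma$ has $p$-group kernel quotient) to force a dichotomy: either the socle component $\eta$ has level $\alpha$, and only then does Theorem \ref{thm:continuity2} enter, or $\sigma$ has level $\alpha$, in which case $\exp(P/P'(P\cap N))=p^\alpha$ and all levels are produced by linear characters of $P/(P\cap N)$ together with induction on $G/N$. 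Without this dichotomy your key hypothesis for invoking Theorem \ref{thm:continuity2} has no justification.

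A secondary, fixable issue: since $N$ is a minimal normal subgroup of $G$, the Sylow subgroup $P$ need not permute its simple factors transitively, so Lemma \ref{lem:NTT} does not apply to $N$ as you use it. The paper avoids this by replacing $N$ with the product of the factors in a single $P$-orbit; but that subgroup is only normal in $H=NP$, not in $G$, which is exactly why the paper proves the separate claim that a level-$\beta$ character of $NP$ (with $\beta\geq 2$) forces one in $G$ --- another ingredient your sketch omits (one could alternatively pad the other orbits with trivial characters before applying Lemma \ref{lem:2}, but some such device is needed).
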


\begin{proof}
Let $N$ be a minimal normal subgroup of $G$. We may assume that $p$
divides $|N|$ because otherwise we are done by induction on $|G|$.

Suppose first that $N$ is abelian. Then in fact $N\nor P$, so that
$P/N\in\Syl_p(G/N)$. If $\exp(P/P')=\exp((P/N)/(P/N)')$ then we are
done again, and therefore, by Lemma \ref{lem:exp}(i), we may assume
that $\exp(P/P')>\exp((P/N)/(P/N)')$; in particular, $N\nsubseteq
P'$. Since $\exp(N)=p$, using Lemma \ref{lem:exp}(ii), we have
\[
\exp(P/P')=p\cdot\exp((P/N)/(P/N)').
\]
By induction, it is sufficient to show that $G$ has an irreducible
$p'$-degree character of $p$-rationality level precisely equal to
$\log_p(\exp(P/P'))=\alpha$. This is in fact a consequence of
Theorem \ref{thm:Malle-Navarro-Tiep19}.

We may now suppose that $N$ is nonabelian of order divisible by $p$.

We claim that if $NP$ has a $p'$-degree irreducible character of
level $\beta\geq 2$, then so does $G$. Let $H:=NP$ and
$\varphi\in\Irr_{p'}(H)$ such that
\[\lev(\varphi)=\beta.\] We have
$\theta:=\varphi_N$ is irreducible by \cite[Corollary
11.29]{Isaacs1}, and therefore is $H$-invariant. Note that $[H:N]$
is a $p$-power and, as $N$ is perfect, the determinantal order of
$\theta$ is trivial. It follows from Lemma \ref{lem:canonical
extension} that $\theta$ has a unique extension to $H$, say
$\widehat{\theta}$, whose determinantal order is coprime to $p$.
Furthermore, we have $\QQ(\widehat{\theta})=\QQ(\theta)$, and in
particular, \[\lev(\widehat{\theta})=\lev(\theta).\]

Using Gallagher's lemma, we have $\varphi=\widehat{\theta}\rho$ for
some linear character $\rho$ of the $p$-group $H/N$. By Lemma
\ref{lem:1}(iii), we know that $\lev(\theta)\leq
\lev(\varphi)=\beta$, and whence $\lev(\widehat{\theta})\leq \beta$.
Using Lemma \ref{lem:level of product character}, we deduce that
$\lev(\rho)\leq\beta$ and
\[
\text{either } \lev(\widehat{\theta})=\beta \text{ or }
\lev(\rho)=\beta.
\]
In the former case, we have $\lev({\theta})=\beta$ and Lemma
\ref{lem:2} therefore guarantees that $G$ has a $p'$-degree
irreducible character of level $\beta$. Suppose the latter case that
$\lev(\rho)=\beta$. Then it is easy to see that \[\beta\leq
\log_p(\exp((H/N)/(H/N)'))=:\gamma.\] Note that $H/N\in\Syl_p(G/N)$.
So $G/N$ has $p'$-degree irreducible characters of all levels from
$2$ to $\gamma$ by induction, implying in particular that $G/N$, and
therefore $G$, has a $p'$-degree irreducible character of level
$\beta$. The claim is proved.

It is now sufficient to show that $H=NP$ has irreducible $p'$-degree
characters of every $p$-rationality level from $2$ to $\alpha$.
Replacing $N$ by the direct product of the simple factors in an
orbit of the action of $P$ on the factors of $N$ and repeating the
above arguments if necessary, we may furthermore assume that $P$
transitively permutes the factors of $N$.

Let $S$ be a simple factor of $N$. Let $K:=\bN_H(S)$ and
$C:=\bC_H(S)$. Then $K/C$ is an almost simple group with socle
$SC/C\cong S$. Moreover, since $N\subseteq C$, $K/SC$ is a
$p$-group.

We know from Theorem \ref{thm:Malle-Navarro-Tiep19} that $H$ has a
$p'$-degree irreducible character of level
$\alpha=\log_p(\exp(P/P'))$, say $\chi$. As above we have
$\eta:=\chi_N$ is irreducible and $\chi=\widehat{\eta}\sigma$, where
$\widehat{\eta}\in\Irr_{p'}(H)$ is the extension of $\eta$ to $H$
produced by Lemma \ref{lem:canonical extension} with
$\lev(\eta)=\lev(\widehat{\eta})$, and $\sigma$ is a linear
character of $H/N=P/(P\cap N)$. Moreover,
\[\lev(\chi)=\alpha\in\{\lev(\eta),\lev(\sigma)\}.\]

If $\lev(\sigma)=\alpha$ then
$\exp(P/P')=p^\alpha=p^{\lev(\sigma)}\leq \exp(P/P'(P\cap N))$,
implying that $\exp(P/P')=\exp(P/P'(P\cap N))=p^\alpha$ by Lemma
\ref{lem:exp}. It is then easy to see, as seen in the proof of
Theorem \ref{thm:McKay-Navarro}, that $P/(P\cap N)$, and thus $H$,
has $p'$-degree irreducible characters of all levels from $2$ to
$\alpha$, as desired.

So we may assume that $\lev(\eta)=\alpha$. Note that
$\eta\in\Irr(N)$ is a product $\eta=\eta_1\times \cdots \times
\eta_n$ of certain characters $\eta_i\in\Irr_{p'}(S^{g_i})$, and so
there must be some $\eta_i$ of level $\alpha$ (and the others will
have level $\leq \alpha$). Without loss, we assume that
$\lev(\eta_1)=\alpha$. Since $\eta$ is $H$-invariant, $\eta_1$ is
$K$-invariant. Now viewing $\eta_1$ as a character of $SC/C\cong S$,
we see that $\eta_1$ is invariant under $K/C$.

We now can apply Theorem \ref{thm:continuity2} to the almost simple
group $K/C$ with socle $SC/C$ and the $K/C$-invariant character
$\eta_1\in\Irr_{p'}(SC/C)$ to deduce that $SC/C$ possesses
$K/C$-invariant irreducible $p'$-degree characters $\theta_i$ of
every level $2\leq i\leq\alpha$. Viewing $\theta_i$ as a character
of $SC$, we have that $\vartheta_i:={\theta_i}_S\in\Irr_{p'}(S)$ is
$K$-invariant.

We partly follow the setup in the proof of \cite[Theorem
2.4]{Navarro-Tiep19}. In particular, let $Q:=\bN_P(S)=K\cap
P\in\Syl_p(K)$ and $\{1=g_1,g_2,...,g_n\}$ be a transversal for the
right cosets of $Q$ in $P$ with $N=S^{g_1}\times\cdots\times
S^{g_n}$. Note that $QC/C$ is a Sylow $p$-subgroup of $K/C$. Since
$\vartheta_i$s are $K$-invariant, they are $Q$-invariant as well. By
Lemma \ref{lem:NTT}, these $\vartheta_i$s give rise to the
characters
\[
\psi_i:=\vartheta_i^{g_1}\times \cdots\times
\vartheta_i^{g_n}\in\Irr_{p'}(N)
\]
that are $P$-invariant. It follows from Lemma \ref{lem:canonical
extension} that $\psi_i$ extends to some
$\widehat{\psi}_i\in\Irr_{p'}(H)$ with
$\lev(\widehat{\psi}_i)=\lev(\psi_i)$. As
$\lev(\psi_i)=\lev(\vartheta_i)=\lev(\theta_i)=i$, we have
$\lev(\widehat{\psi}_i)=i$. The characters $\widehat{\psi}_i$ with
$2\leq i\leq \alpha$ fulfill our requirement, and the proof is
finished.
\end{proof}


\section{Simple groups: some generalities}\label{sec:simple-groups}

In this and the next two sections, we will prove Theorem
\ref{thm:continuity2}, which is the case $p=2$ of the following:

\begin{conjecture}\label{conj:continuity2-repeated}
Let $p$ be a prime, $S$ a finite nonabelian simple group and $S\nor
X\leq \Aut(S)$ an almost simple group such that $X/S$ is a
$p$-group. If $S$ has an $X$-invariant irreducible $p'$-degree
character of $p$-rationality level $\alpha\geq 2$, then $S$ has
$X$-invariant irreducible $p'$-degree characters of every level from
$2$ to $\alpha$.
\end{conjecture}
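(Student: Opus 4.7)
The plan is to prove Conjecture \ref{conj:continuity2-repeated} by reduction to the classification of finite simple groups, disposing of the easy families first and then reducing the content to the Jordan/Lusztig parameterization for groups of Lie type in non-defining characteristic, where the paper's Theorem \ref{thm:technical} furnishes the key link between the $p$-rationality level of a $p'$-degree character and the $p$-part of its semisimple Jordan label. First I would dispose of the families for which the hypothesis $\alpha\geq 2$ is nearly vacuous: alternating groups have rational-valued irreducible characters, so $\lev_p(\chi)=0$ always; sporadic simple groups (together with the Tits group) can be checked directly from the character tables and the known action of $\Aut(S)$; and simple groups of Lie type in defining characteristic $p$ admit a direct inspection via Deligne--Lusztig theory that severely bounds the levels of their $p'$-degree characters.

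For the main case, let $S=\bG^F/\Center(\bG^F)$ be a simple group of Lie type in cross characteristic $\ell\neq p$. An $X$-invariant $\chi\in\Irr_{p'}(S)$ of level $\alpha\geq 2$ lifts to some $\tilde\chi\in\Irr_{p'}(\bG^F)$ lying in a Lusztig series $\mathcal{E}(\bG^F,s)$, with $s$ a semisimple element in the dual $(\bG^*)^F$. Theorem \ref{thm:technical}, under its centralizer hypothesis, gives $\lev_p(\tilde\chi)=\log_p|s_p|$, so $s_p$ has order exactly $p^\alpha$. For each $2\leq\beta\leq\alpha$ I would introduce
\[
t_\beta := s_{p'}\cdot (s_p)^{p^{\alpha-\beta}}\in (\bG^*)^F,
\]
whose $p$-part has order $p^\beta$. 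Since $t_\beta$ is a polynomial in $s$, its centralizer contains $\Centralizer_{\bG^*}(s)$, so the Jordan-correspondent character in $\mathcal{E}(\bG^F,t_\beta)$ is again of $p'$-degree and inherits the centralizer hypothesis of Theorem \ref{thm:technical}; applying the theorem once more yields a character of level exactly $\beta$, which one then descends to $S$.

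The substantive difficulty is preserving $X$-invariance throughout this construction. Since $X/S$ is a $p$-group, it is, modulo diagonal automorphisms, generated by a $p$-power field or graph-field automorphism, whose action on Lusztig series corresponds, via Steinberg--Ennola duality, to an action on semisimple conjugacy classes of $(\bG^*)^F$. The assignment $s\mapsto t_\beta$ is a polynomial in $s$ using the canonical decomposition $s=s_{p'}s_p$, hence it commutes with any such automorphism, and the stabilizer of the class of $s$ is contained in that of $t_\beta$. Diagonal automorphisms are the delicate point: they permute characters within a single Lusztig series according to the component group $\Center(\Centralizer_{\bG^*}(s))/\Center(\Centralizer_{\bG^*}(s))^\circ$, and one must match up the stabilizer of $\chi$ with that of the intermediate character. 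I expect this $X$-invariance bookkeeping to be the principal obstacle: in the families where $\Center(\bG)$ is disconnected --- namely type $A$ and type $E_6$, treated separately in Sections \ref{sec:linear-unitary} and \ref{sec:E6} --- the passage between $\bG^F$ and $S$ may identify or split $X$-orbits, so the intermediate characters must sometimes be chosen by adjusting the Jordan correspondent by a linear character of appropriate $p$-rationality level before descending to $S$.
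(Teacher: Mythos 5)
Your overall skeleton (CFSG reduction, dispose of the easy families, then work in a Lusztig series and raise the semisimple label to powers so that its $p$-part drops from $p^\alpha$ to $p^\beta$, controlling levels via Theorem \ref{thm:technical}) is the same as the paper's, but the proposal leaves exactly the decisive steps open. First, the choice of the intermediate character: you take ``the Jordan-correspondent character in $\EC(\bG^F,t_\beta)$'' with the same unipotent label and then concede that invariance under $X$ (in particular under diagonal automorphisms) is ``the principal obstacle'' without resolving it. The paper's way out is precisely to \emph{not} keep the unipotent label: it replaces it by the trivial one, i.e.\ takes the unique semisimple character of the new series (for $E_6^\epsilon$ the relevant $2$-central elements have connected centralizer by \cite[Table 1]{Malle19}; for type $A$ one passes through the regular embedding and \cite[Corollary 2.6.18]{GM20}), and uniqueness inside an $X$-stable series gives $X$-invariance for free. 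Without this device (or a substitute), your argument does not close. Second, you apply Theorem \ref{thm:technical} directly to $\bG^F$ with $S=\bG^F/\bZ(\bG^F)$, but for simply connected $\bG$ the centralizer of $s$ in the adjoint dual need not be connected or a Levi, and, more seriously, the level of a character of $S$ (equivalently of $SL_n^\epsilon(q)$) is governed by a \emph{minimum over central twists} $ts$, $t\in\bZ(\widetilde G)$, as in Proposition \ref{prop:technical2}(ii). You must show that this minimum for $t_\beta$ is still $p^\beta$; in the paper this is the explicit inequality comparing $\max_i\ord(s_i^2)_2$ with $\max_i\ord(\tau s_i^2)_2$ in the proof of Proposition \ref{prop:SL}. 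As written, your intermediate character could a priori drop to a smaller level once descended to $S$. Relatedly, you never verify that the new character is trivial on $\bZ(\bG^F)$ so that it actually lives on $S$; the paper does this via Lemma \ref{lem:restriction-center} and $s\in[G^\ast,G^\ast]$.

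Two further points. The claim that $t_\beta$ ``inherits the centralizer hypothesis'' of Theorem \ref{thm:technical} is unjustified: $\bC_{\bG^\ast}(t_\beta)\supseteq\bC_{\bG^\ast}(s)$ may be strictly larger, and one must re-verify both that it is a Levi and that its $p'$-degree unipotent characters have level at most $\beta-1$ (this is automatic in $\GL_n^\epsilon$, where all such centralizers are products of $\GL/\GU$'s with rational unipotent characters, and is checked by hand for $E_6^\epsilon$, but it is not a formal consequence of the hypothesis for $s$). Finally, your dismissal of the alternating groups is based on a false statement: $\Alt_n$ does \emph{not} have only rational-valued characters; the correct (and sufficient) fact, used in the paper via \cite{Malle19}, is that all of its $p'$-degree characters are almost $p$-rational, i.e.\ of level at most $1$. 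Note also that the statement you are proving is stated as a conjecture in the paper and is established there only for $p=2$; several ingredients you invoke (rationality of the relevant unipotent characters, the elementary abelian Sylow abelianizations that kill most families) are specific to that prime, so the general-$p$ plan would need genuinely new input in any case.
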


\begin{remark} Almost identical arguments as in the proof of
Theorem \ref{thm:continuity2-repeated} show that
Conjecture~\ref{conj:continuity1} follows from Conjecture
\ref{conj:continuity2-repeated} and the `only if' direction of
Conjecture \ref{conj:Isaacs-Navarro}. This direction, as we
mentioned already, has been reduced to almost quasisimple groups,
and the exact statement one has to verify for those groups is in
\cite[Conjecture 5.4]{Navarro-Tiep19}. Therefore, Conjecture
\ref{conj:continuity1} has been reduced to almost quasisimple
groups, as follows:
\end{remark}

\begin{theorem}\label{thm:reduction}
Conjecture \ref{conj:continuity1} follows from Conjecture
\ref{conj:continuity2-repeated} and \cite[Conjecture
5.4]{Navarro-Tiep19}.
\end{theorem}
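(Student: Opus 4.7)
The plan is to essentially re-run the proof of Theorem \ref{thm:continuity2-repeated} for a general prime $p$, replacing the two $p=2$-specific ingredients by the two hypotheses of the theorem. Recall that that proof invoked $p=2$ only in two places: via Theorem \ref{thm:Malle-Navarro-Tiep19}, to produce, when $\log_p(\exp(P/P'))=\alpha\ge 2$, a $p'$-degree irreducible character of $G$ of level exactly $\alpha$; and via Theorem \ref{thm:continuity2}, the almost simple continuity statement used at the last step. For general $p$, the second is supplied directly by Conjecture \ref{conj:continuity2-repeated}, and the first is supplied by the `only if' direction of Conjecture \ref{conj:Isaacs-Navarro}, which by \cite[Theorem C]{Navarro-Tiep19} is reduced to the almost quasisimple statement \cite[Conjecture 5.4]{Navarro-Tiep19}.

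First I would reformulate the goal in the shape of Theorem \ref{thm:continuity}: under the stated hypotheses, prove that whenever $\log_p(\exp(P/P'))=\gamma\ge 2$, the group $G$ possesses $p'$-degree irreducible characters of every level from $2$ to $\gamma$. Combining the already-known `if' direction of Conjecture \ref{conj:Isaacs-Navarro} \cite[Theorem B]{Navarro-Tiep19} with the assumed `only if' direction, every level $\alpha\ge 2$ attained in $\Irr_{p'}(G)$ satisfies $\alpha\le \gamma$, and $\gamma$ itself is attained; hence this reformulation is equivalent to Conjecture \ref{conj:continuity1}.

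Next, I would mimic the induction on $|G|$ from the proof of Theorem \ref{thm:continuity2-repeated}. The abelian minimal normal subgroup case carries over unchanged: Lemma \ref{lem:exp} governs how $\exp(P/P')$ behaves when passing to $G/N$, induction produces characters of levels $2$ through $\gamma-1$, and the level-$\gamma$ character of $G$ is supplied by the reformulation above in place of Theorem \ref{thm:Malle-Navarro-Tiep19}. The nonabelian case is also structurally identical: one reduces to $H=NP$ with $P$ permuting the simple factors of $N$ transitively and, after the analyses via Lemmas \ref{lem:canonical extension}, \ref{lem:1}, \ref{lem:level of product character}, and \ref{lem:2}, one needs for each $2\le i\le\gamma$ a $K$-invariant $p'$-degree irreducible of level $i$ on a simple factor $S$, where $K/C$ is almost simple with socle $SC/C\cong S$ and $K/C$ over $SC/C$ a $p$-group. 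Conjecture \ref{conj:continuity2-repeated} delivers these characters, and they propagate to $H$ and then to $G$ via Lemma \ref{lem:NTT} and canonical extension exactly as before.

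The main point to verify, and the only place where one must go beyond a mechanical translation of the $p=2$ argument, is that the assumed almost quasisimple statement \cite[Conjecture 5.4]{Navarro-Tiep19} really does yield the `only if' direction of Conjecture \ref{conj:Isaacs-Navarro} for arbitrary $G$; but this is already established in \cite[Theorem C]{Navarro-Tiep19}. Once that is granted, the remainder is a faithful copy of the proof of Theorem \ref{thm:continuity2-repeated}, and no new obstacle arises.
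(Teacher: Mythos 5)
Your proposal is correct and follows essentially the same route as the paper, which proves Theorem \ref{thm:reduction} by the remark that the argument of Theorem \ref{thm:continuity2-repeated} goes through verbatim once Theorem \ref{thm:Malle-Navarro-Tiep19} is replaced by \cite[Theorem B]{Navarro-Tiep19} together with the `only if' direction of Conjecture \ref{conj:Isaacs-Navarro} (reduced to \cite[Conjecture 5.4]{Navarro-Tiep19} via \cite[Theorem C]{Navarro-Tiep19}), and Theorem \ref{thm:continuity2} is replaced by Conjecture \ref{conj:continuity2-repeated}. Your explicit check that the reformulation in the shape of Theorem \ref{thm:continuity} is equivalent to Conjecture \ref{conj:continuity1} under these hypotheses is exactly the point the paper leaves implicit, and no further detail is needed.
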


\begin{proposition}
Conjecture \ref{conj:continuity2-repeated} holds true when $S$ is a
sporadic simple group, the Tits group, an alternating group, or a
simple group of Lie type in characteristic $p$.
\end{proposition}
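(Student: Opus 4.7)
We may assume $p$ divides $|S|$, since otherwise every $\chi\in\Irr(S)$ has values in $\QQ(\zeta_{|S|})$ with $p\nmid|S|$ and the hypothesis is vacuous. The strategy is to treat each family separately: for alternating groups and Lie-type groups in defining characteristic we will show that no $\chi\in\Irr(S)$ has $p$-rationality level at least $2$ (so the hypothesis never occurs), while the remaining sporadic cases we verify directly.

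For $S=A_n$ with $n\geq 5$, every $\chi\in\Irr(A_n)$ is either the restriction of a rational character of $S_n$, or one of a pair of Galois-conjugate split characters $\chi_\lambda^{\pm}$ indexed by a self-conjugate partition $\lambda$ of $n$. The classical formula for $\chi_\lambda^{\pm}$ on the split class of cycle type $\mu=(\mu_1>\cdots>\mu_k)$ (distinct odd parts, corresponding to $\lambda$ under Glaisher's bijection) is
\[
\chi_\lambda^{\pm}(C_\mu) \;=\; \tfrac{1}{2}\bigl((-1)^{(n-k)/2}\pm\sqrt{d_\lambda}\bigr),\qquad d_\lambda=(-1)^{(n-k)/2}\prod_i\mu_i.
\]
Writing $\mu_i=2m_i+1$ and letting $b=\#\{i:\mu_i\equiv 3\pmod 4\}=\#\{i:m_i\text{ odd}\}$, one has $(n-k)/2=\sum_i m_i\equiv b\pmod 2$ and $\prod_i\mu_i\equiv(-1)^b\pmod 4$, so $d_\lambda\equiv 1\pmod 4$. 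Writing $d_\lambda=s^2 t$ with $t$ squarefree forces $s$ odd and $t\equiv 1\pmod 4$, so $\QQ(\sqrt{d_\lambda})=\QQ(\sqrt{t})$ has odd squarefree conductor. Hence every character of $A_n$ is $2$-rational, and for odd $p$ its $p$-rationality level is at most $1$.

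For $S$ a simple group of Lie type in the defining characteristic $p$, the claim is that $\lev_p(\chi)\leq 1$ for every $\chi\in\Irr(S)$. Realise $S$ as the simple quotient of a finite reductive group $G=G^F$ by its center. By Deligne--Lusztig theory, each character value of $G$ is a $\ZZ$-linear combination of products whose roots of unity have orders dividing either $|T|$ for some $F$-stable maximal torus $T$ of $G$ (hence dividing $\prod_i(q^i\pm 1)$, coprime to $p$), or appear inside a Gauss sum over $\FF_q$ attached to the additive character $\psi_q(x)=\zeta_p^{\Tr_{\FF_q/\FF_p}(x)}$ (entering via Gelfand--Graev-type constructions). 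Since $\Tr_{\FF_q/\FF_p}$ takes values in $\FF_p$, these Gauss sums lie in $\QQ(\zeta_p)$ regardless of $q=p^f$. Consequently the $p$-part of the conductor of any character value of $G$, and a fortiori of $S$, is at most $p$.

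For the finitely many sporadic simple groups together with the Tits group $\tw{2}F_4(2)'$, we proceed by direct case analysis using the character tables and $\Out(S)$-actions available in the ATLAS and in GAP's character table library. For each pair $(S,p)$ with $p\mid|S|$ and each $p$-subgroup $X/S\leq\Out(S)$, we enumerate the $X$-invariant $\chi\in\Irr_{p'}(S)$, read off $\lev_p(\chi)$ from the $p$-part of the conductor of $\QQ(\chi)$, and verify that whenever level $\alpha\geq 2$ is attained, every intermediate level in $\{2,\ldots,\alpha\}$ is also realised by an $X$-invariant character. The main (and essentially only) obstacle here is the size of the character tables of the largest sporadic groups, but the verification is routine with computer algebra.
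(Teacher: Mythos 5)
Your reduction is the right one (show the hypothesis ``level $\alpha\geq 2$'' never occurs for these families, plus a finite check for the sporadic groups and the Tits group), and your alternating-group argument is correct and even stronger than needed: the classical splitting formula with $d_\lambda\equiv 1\pmod 4$ does show every character of $A_n$ has squarefree odd conductor, hence $p$-level at most $1$ for every $p$; this is a nice elementary substitute for the paper's citation of Malle. The sporadic/Tits step, left as a routine table check, is in the same spirit as the paper, which simply quotes Malle's verifications that all $p'$-degree characters there are almost $p$-rational (level at most $2$ for ${}^2F_4(2)'$ at $p=2$, which still makes the statement trivial).

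The genuine gap is the defining-characteristic Lie-type case. First, the mechanism you invoke does not exist: irreducible characters are \emph{not} $\ZZ$-linear (nor even $\QQ$-linear, in general) combinations of Deligne--Lusztig characters --- only uniform functions lie in their span --- so you cannot bound the roots of unity occurring in arbitrary character values by torus orders plus Gauss sums over $\FF_q$; a priori the values at elements whose unipotent part has order $p^a$, $a\geq 2$, live in $\QQ_{p^a|G|_{p'}}$, and nothing in your sketch excludes this. Second, the blanket claim you are trying to prove (``every character of a simple group of Lie type in characteristic $p$ has $\lev_p\leq 1$'') is false: by Lusztig's rationality results (the paper's reference [Lus02]), the Suzuki groups ${}^2B_2(q)$ and the groups ${}^2F_4(q)$ in characteristic $2$ have irrational cuspidal unipotent characters whose fields of values have $2$-power conductor at least $4$ (e.g.\ the two cuspidal unipotent characters of ${}^2B_2(q)$, irrational only on the order-$4$ classes, have non-real values in $\QQ(i)$), i.e.\ $2$-level at least $2$; the paper's own remark that ${}^2F_4(2)'$ has odd-degree characters of $2$-level $2$ points the same way. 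What is true, and what the paper actually uses (via Malle's Propositions 2.3--2.4), is the statement restricted to $p'$-degree characters: in defining characteristic these are exactly the semisimple characters, and a genuine argument about their values shows they are almost $p$-rational. Your proof never engages with the $p'$-degree hypothesis in this case, so this step needs to be replaced, either by citing that result or by an argument specific to semisimple characters.
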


\begin{proof}
The statement is trivial when all the irreducible $p'$-degree
characters of $S$ have $p$-rationality level at most $2$. We will
see that this is indeed the case for all the groups in question. In
fact, it was already explained in the proofs of \cite[Propositions
2.1, 2.2, 2.3, and 2.4]{Malle19} that, when $S\neq {}^2F_4(2)'$,
every irreducible $p'$-degree character of a group in consideration
is almost $p$-rational. (We note a typo in the proof of
\cite[Proposition 2.4]{Malle19}: $\sigma_e$ should be replaced by
$\sigma_1$.) The same thing happens with the Tits group
${}^2F_4(2)'$ and $p$ odd. The group ${}^2F_4(2)'$ does have some
odd-degree irreducible characters of $2$-rationality level $2$
though, but none of level higher than $2$, and thus the statement
still holds in this case.
\end{proof}

The focus of Conjecture \ref{conj:continuity2-repeated} is therefore
on simple groups of Lie type in non-defining characteristic. To
continue, we need to setup a general framework for these groups via
algebraic groups and the Deligne-Lusztig theory on which we refer
the reader to \cite{Carter85,Digne-Michel91} for some background.

Let $\bG$ be a connected reductive group defined over an
algebraically closed field of characteristic $r>0$ and
$F:\bG\rightarrow \bG$ is a Frobenius endomorphism that defines an
$\mathbb{F}_q$-rational structure $G=\bG^F$, where $q$ is a power of
$r$. Suppose that $(\bG^\ast,F^\ast)$ is dual to $(\bG,F)$ and let
$G^\ast:={\bG^\ast}^{F^\ast}$.

The set $\Irr(G)$ is naturally partitioned into the (rational)
Lusztig series $\EC(G,s)$ associated to various $G^\ast$-conjugacy
classes of semisimple elements $s\in G^\ast$. The series $\EC(G,s)$
is defined to be the set of irreducible characters of $\bG^F$
occurring in some Deligne-Lusztig character $R_{\bT}^{\bG}\theta$,
where $\bT$ is an $F$-stable maximal torus of $\bG$ and
$\theta\in\Irr(\bT^F)$ such that the geometric conjugacy class of
$(\bT,\theta)$ corresponds to the $G^\ast$-conjugacy class
containing $s$ (see \cite[Theorem 4.4.6]{Carter85} for this
correspondence). For later use, we also note that, when $\bZ(\bG)$
is connected, there is a natural bijection $\pi_s$ from $\EC(G,s)$
to $\EC(\bC_{G^\ast}(s),1)$ -- the set of unipotent characters of
$\bC_{G^\ast}(s)$ (see \cite[Theorem 4.7.1]{GM20} for the
description of $\pi_s$). If $\chi\in\EC(G,s)$ corresponds to
$\psi\in\EC(\bC_{G^\ast}(s),1)$ under this bijection, we will say
that $(s,\psi)$ is the Jordan decomposition of $\chi$.

Though the actions of $\sigma_\alpha$ and $\Aut(G)$ on $\Irr(G)$ are
generally difficult to control, it turns out that their effect on
Lusztig series $\EC(G,s)$ is nicely behaved. (Recall that
$\sigma_{\alpha}\in Gal(\QQ^{ab}/\QQ)$ fixes roots of unity of order
not divisible by $p$ and maps every $p$-power root of unity $\xi$ to
$\xi^{1+p^\alpha}$.) In fact, by a result of A.\,A. Schaeffer Fry
and J. Taylor \cite[Lemma 3.4]{ST18}, we have
\begin{equation}\label{eq:1}
\EC(G,s)^{\sigma_\alpha}=\EC(G,s_p^{1+p^\alpha}\cdot s_{p'}),
\end{equation}
where $s_p$ and $s_{p'}$ are respectively the $p$- and $p'$-parts of
$s$.

To describe the action of $\Aut(G)$ on Lusztig series, we need some
more notation. Let $\varphi\in \Aut(G)$. It is well-known that
$\varphi$ is then the restriction of some bijective morphism $\tau$
of $\bG$ that commutes with $F$. This $\tau$ induces a bijective
morphism $\tau^\ast$ on $\bG^\ast$ that commutes with $F^\ast$,
which implies that the restriction
$\varphi^\ast:=\tau^\ast\hspace{-5pt}\downarrow_{G^\ast}$ is an
automorphism of $G^\ast$. We call $\varphi^\ast$ an automorphism
dual to $\varphi$. By \cite[Corollary 2.4]{Navarro-Tiep-Turull}
(this is due to C. Bonnaf\'{e} and a more detailed proof was later
given by J. Taylor in \cite[Proposition 7.2]{Taylor18}), we have
\begin{equation}\label{eq:2}
\EC(G,s)^{\varphi}=\EC(G,\varphi^\ast(s)).
\end{equation}

\begin{proposition}\label{prop:technical-0}
Let $G$ and $G^\ast$ be as above, and $r$ their defining
characteristic. Let $p\neq r$ a prime and $\chi\in\Irr_{p'}(G)$ with
$\lev(\chi)=\alpha\geq 1$. Suppose that $\chi\in\EC(G,s)$ for some
semisimple element $s\in G^\ast$. Then
\begin{enumerate}[\rm(i)]
\item $s$ is $p$-central in $G^\ast$, and

\item $\ord(s_p)\leq p^\alpha$, where $s_p$ is the $p$-part of
$s$.

\end{enumerate}
\end{proposition}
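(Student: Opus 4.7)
My plan for part (i) is to appeal to the standard character degree formula for Lusztig series: any $\chi\in\EC(G,s)$ has degree divisible by the $r'$-part of the centralizer index $[G^{\ast}:\bC_{G^{\ast}}(s)]$. Since $p\neq r$, the $p$-part of this index coincides with the $p$-part of its $r'$-part, so the hypothesis $p\nmid\chi(1)$ forces $[G^{\ast}:\bC_{G^{\ast}}(s)]_p=1$. This is precisely the statement that $\bC_{G^{\ast}}(s)$ contains a Sylow $p$-subgroup of $G^{\ast}$, i.e.\ that $s$ is $p$-central. When $\bZ(\bG)$ is disconnected the same conclusion is recovered by first lifting $\chi$ to a character of a regular embedding $\widetilde{\bG}^{F}\supseteq G$ (whose dual has connected centralisers) and transferring via the compatibility of Jordan decomposition with this embedding.

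For part (ii) I would combine the Galois-action formula~(\ref{eq:1}) with a Sylow argument inside $\bC_{G^{\ast}}(s_{p'})$. Since $\lev(\chi)=\alpha$, the character $\chi$ is $\sigma_{\alpha}$-invariant, and~(\ref{eq:1}) then puts $\chi$ into $\EC(G,s_p^{1+p^{\alpha}}s_{p'})$ in addition to $\EC(G,s)$. As distinct rational Lusztig series are disjoint, the semisimple labels $s$ and $s_p^{1+p^{\alpha}}s_{p'}$ must be $G^{\ast}$-conjugate; the uniqueness of the multiplicative Jordan decomposition then produces $g\in\bC_{G^{\ast}}(s_{p'})$ with $g\,s_p\,g^{-1}=s_p^{1+p^{\alpha}}$.

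To finish, set $\beta:=\log_p\ord(s_p)$ and choose, using (i), a Sylow $p$-subgroup $P\leq\bC_{G^{\ast}}(s)$ of $G^{\ast}$. The chain $\bC_{G^{\ast}}(s)\leq\bC_{G^{\ast}}(s_{p'})\leq G^{\ast}$ forces the $p$-parts of all three orders to agree, so $P$ is again Sylow in $\bC_{G^{\ast}}(s_{p'})$; and since $s_p$ is a $p$-element of $\bC_{G^{\ast}}(s)$ centralising $P$, we get $s_p\in\bZ(P)$. Hence $P$ lies in both $N:=\bN_{\bC_{G^{\ast}}(s_{p'})}(\langle s_p\rangle)$ and $C:=\bC_{\bC_{G^{\ast}}(s_{p'})}(s_p)$, so $[N:C]$ is coprime to $p$, and the resulting conjugation embedding $N/C\hookrightarrow\Aut(\langle s_p\rangle)\cong(\ZZ/p^{\beta}\ZZ)^{\times}$ has $p'$-image. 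However our element $g$ realises the automorphism $x\mapsto x^{1+p^{\alpha}}$, whose order in $(\ZZ/p^{\beta}\ZZ)^{\times}$ is a $p$-power and is nontrivial precisely when $\beta>\alpha$; the forced triviality yields $\beta\leq\alpha$, as required. The main delicate point is the disconnected-centre reduction in~(i); modulo that, (ii) is a clean consequence of~(\ref{eq:1}) and Sylow theory inside $\bC_{G^{\ast}}(s_{p'})$.
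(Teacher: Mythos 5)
Your argument is correct, and while it starts the same way as the paper, it finishes part (ii) by a genuinely different route. For (i) the paper does not reprove anything: it simply quotes Malle's result (\cite[Proposition 7.2]{Malle07}) that a $p'$-degree character in $\EC(G,s)$ forces $s$ to be $p$-central. Your sketch via the degree divisibility $[G^{\ast}:\bC_{G^{\ast}}(s)]_{r'}\mid\chi(1)$ is the standard idea behind that citation, but your one-line disconnected-centre reduction is glossed: a character $\widetilde{\chi}$ of a regular embedding $\widetilde{G}$ lying over $\chi$ satisfies $\widetilde{\chi}(1)=t\chi(1)$ with $t\mid[\widetilde{G}:G]$, and this $t$ may be divisible by $p$, so ``$p\nmid\chi(1)$ transfers upward'' is not automatic; making this precise is exactly the content of the result the paper cites, so you should either cite it or carry out that comparison of centralizer indices carefully. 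For (ii) both you and the paper use formula~(\ref{eq:1}) and disjointness of rational series to conclude that $s$ is $G^{\ast}$-conjugate to $s_p^{1+p^{\alpha}}s_{p'}$, but then the treatments diverge: the paper only retains that $s_p$ is conjugate to $s_p^{1+p^{\alpha}}$, invokes the Burnside-type lemma that $\bN_{G^{\ast}}(P^{\ast})$ controls fusion in $\bC_{G^{\ast}}(P^{\ast})$, replaces the conjugator by its $p$-part $y\in P^{\ast}$, and finishes with the elementary computation that the $p$-part of $(1+p^{\alpha})^{k}-1$ is $p^{\alpha}$ for $p\nmid k$. You instead exploit uniqueness of the multiplicative Jordan decomposition to place the conjugator $g$ inside $\bC_{G^{\ast}}(s_{p'})$ (note you should say explicitly that $g$ normalizes $\langle s_p\rangle$, which holds since $\gcd(1+p^{\alpha},p)=1$, and that $s_p\in P$ by Sylow maximality of $P$ in $\langle s_p\rangle P$), and then run the $N/C$-theorem: the image of $\bN_{\bC_{G^{\ast}}(s_{p'})}(\langle s_p\rangle)$ in $\Aut(\langle s_p\rangle)$ is a $p'$-group because a full Sylow $p$-subgroup centralizes $s_p$, whereas $g$ induces the $p$-power-order automorphism $x\mapsto x^{1+p^{\alpha}}$, which must therefore be trivial, giving $\ord(s_p)\leq p^{\alpha}$. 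Your finish trades the paper's fusion-control lemma and lifting-the-exponent computation for the normalizer--centralizer embedding; both are clean, and yours has the small advantage of avoiding any explicit order computation with $(1+p^{\alpha})^{k}-1$.
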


\begin{proof}
By \cite[Proposition 7.2]{Malle07}, the fact that $\chi\in \EC(G,s)$
has $p'$-degree implies that $s$ is a $p$-central element of
$G^\ast$. Since $\lev(\chi)=\alpha$, $\chi$ is
$\sigma_\alpha$-invariant, and so the Lusztig series $\EC(G,s)$ is
$\sigma_\alpha$-invariant. It follows that, by (\ref{eq:1}), $s$ is
conjugate to $s_p^{1+p^\alpha}\cdot s_{p'}$ in $G^\ast$. In
particular, $s_p$ is conjugate to $s_p^{1+p^\alpha}$ in $G^\ast$ as
well.

Recall that $s$ is $p$-central. Therefore so is $s_p$, and it
follows that $\bC_{G^\ast}(s_p)$ contains a Sylow $p$-subgroup, say
$P^\ast$, of $G^\ast$. Since $\bN_{G^\ast}(P^\ast)$ controls
$G^\ast$-fusion in $\bC_{G^\ast}(P^\ast)$ (see for instance
\cite[Lemma 5.12]{Isaacs08}), we deduce that
$s_p^{1+p^\alpha}=x^{-1}s_px$ for some $x\in \bN_{G^\ast}(P^\ast)$.
Let $k:=|x|_{p'}$ and $y:=x^k\in P^\ast$. We then have
$s_p^{(1+p^\alpha)^k}=y^{-1}s_py=s_p$ since
$s_p\in\bC_{G^\ast}(P^\ast)$. It follows that $|s_p|$ divides
$(1+p^\alpha)^k-1$, which yields $|s_p|$ divides $p^\alpha$ because
$p\nmid k$, as desired.
\end{proof}

In Proposition \ref{prop:technical-0}, generally $\log_p(\ord(s_p))$
could be smaller than $\lev(\chi)$. However, in certain nice
situations as in the following key results that are sufficient for
our purpose, they are indeed equal. The next two theorems might be
useful in future work on Conjecture~\ref{conj:continuity2-repeated}
for odd $p$.

\begin{theorem}\label{thm:technical}
Let $\alpha\in \ZZ^{\geq 2}$. Let $\bG$, $\bG^\ast$, $G$, $G^\ast$,
$F$, and $F^\ast$ be as above, and $r$ the defining characteristic
of $\bG$. Let $p\neq r$ a prime and $\chi\in\Irr_{p'}(G)$. Suppose
that ${\chi}\in\EC({G},s)$ for some ($p$-central) semisimple element
$s\in G^\ast$ such that $\bL^\ast:=\bC_{\bG^\ast}(s)$ is a Levi
subgroup of $\bG^\ast$ and all the $p'$-degree (unipotent)
characters in $\EC(\bL^F,1)$, where $\bL$ is an $F$-stable Levi
subgroup of $\bG$ dual to $\bL^\ast$, have $p$-rationality level at
most $\alpha-1$. We have
\begin{enumerate}[\rm(i)]
\item $\lev({\chi})=\alpha$ if and only if $\ord(s_p)= p^\alpha$,
and in such case,

\item the $p$-rationality levels of all the $p'$-degree characters in
$\EC({G},s)$ are the same.
\end{enumerate}
\end{theorem}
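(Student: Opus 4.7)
My plan is to exploit Bonnafé's Jordan decomposition via Lusztig induction, which applies precisely because the centralizer $\bL^*=\bC_{\bG^*}(s)$ is assumed to be a Levi of $\bG^*$. Let $\bL$ be an $F$-stable Levi of $\bG$ dual to $\bL^*$. Bonnafé's theorem (see e.g.\ \cite[Proposition 13.30]{Digne-Michel91}) produces a linear character $\hat s\in\Irr(\bL^F)$ dual to $s$, with $\lev_p(\hat s)=\log_p(\ord(s_p))$, such that $\psi\mapsto \varepsilon\,R_\bL^\bG(\hat s\cdot\psi)$ is a bijection $\EC(\bL^F,1)\to\EC(G,s)$ for a suitable sign $\varepsilon$. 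Since $s$ is $p$-central (by Proposition~\ref{prop:technical-0}), $\bL^*$ contains a Sylow $p$-subgroup of $G^*$, so $[\bG^F:\bL^F]$ is coprime to $p$; writing $\chi=\varepsilon\,R_\bL^\bG(\hat s\cdot\psi_0)$, the partner $\psi_0$ is then of $p'$-degree, and by hypothesis $\lev(\psi_0)\le\alpha-1$.

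The first step is to establish $\lev(\chi)=\lev(\hat s\cdot\psi_0)$. I would deduce this from the Galois equivariance of Lusztig induction (in the vein of \cite[Lemma 3.4]{ST18}) together with the injectivity of $R_\bL^\bG$ on $\EC(\bL^F,s)$: for every $\sigma\in\Gal(\QQ_{|G|}/\QQ)$, the character $\chi=\varepsilon\,R_\bL^\bG(\hat s\cdot\psi_0)$ is $\sigma$-fixed if and only if $\hat s\cdot\psi_0$ is $\sigma$-fixed. Testing this against $\sigma=\sigma_n$ for varying $n$ yields the claimed equality of $p$-rationality levels.

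The heart of the argument is then the direct analysis of $\lev(\hat s\cdot\psi_0)$. Set $\beta:=\log_p(\ord(s_p))\le\alpha$. Since $\hat s$ is linear of order with $p$-part $p^\beta$, its restriction to a Sylow $p$-subgroup of $\bL^F$ is a linear character of order $p^\beta$; in particular $\lev(\hat s)=\beta$ and is achieved at some $p$-element $g\in\bL^F$ with $\hat s(g)$ a primitive $p^\beta$-th root of unity. If $\beta=\alpha$, then $\lev(\psi_0)\le\alpha-1<\alpha=\lev(\hat s)$, so $\QQ(\hat s\cdot\psi_0)\subseteq\QQ_{p^\alpha m}$ for some $p'$-number $m$ and hence $\lev(\hat s\cdot\psi_0)\le\alpha$. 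For the matching lower bound, the non-vanishing $\psi_0(g)\ne 0$ (by \cite[Remark 4.1]{DPSS09}, as $\psi_0$ has $p'$-degree) allows the identity $\hat s(g)=(\hat s\cdot\psi_0)(g)/\psi_0(g)$, which confines the primitive $p^\alpha$-th root $\hat s(g)$ to the compositum of $\QQ((\hat s\cdot\psi_0)(g))$ and $\QQ_{p^{\alpha-1}m'}$; this forces $\lev((\hat s\cdot\psi_0)(g))\ge\alpha$, and Lemma~\ref{lem:achieved} delivers $\lev(\hat s\cdot\psi_0)\ge\alpha$. If instead $\beta<\alpha$, both $\lev(\hat s)$ and $\lev(\psi_0)$ are at most $\alpha-1$, so $\lev(\hat s\cdot\psi_0)\le\alpha-1$. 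This proves (i).

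Assertion (ii) follows from the same analysis: when $\ord(s_p)=p^\alpha$, every $p'$-degree unipotent $\psi\in\EC(\bL^F,1)$ satisfies $\lev(\psi)\le\alpha-1$ by hypothesis, so the argument of the previous paragraph yields $\lev(\hat s\cdot\psi)=\alpha$ uniformly, and via the first step every character in $\EC(G,s)\cap\Irr_{p'}(G)$ has level exactly $\alpha$. The main technical obstacle is the bookkeeping inside Bonnafé's theorem: one must identify the linear character $\hat s$ carefully enough to confirm $\lev_p(\hat s)=\log_p\ord(s_p)$, and one must verify that Galois equivariance of Lusztig induction genuinely holds at the level of characters rather than only of Lusztig series. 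Both facts are available in the literature but require a careful match of conventions between the dual element $s\in\bZ(\bL^*)^{F^*}$ and its linear character on $\bL^F$.
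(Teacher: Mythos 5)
Your construction of the bijection $\EC(\bL^F,1)\to\EC(G,s)$, $\psi\mapsto\varepsilon R_\bL^\bG(\hat s\psi)$, the identification $\lev_p(\hat s)=\log_p\ord(s_p)$, and the transfer of $\sigma$-invariance from $\hat s\psi_0$ up to $\chi$ via the integrality of the coefficients in the character formula for $R_\bL^\bG$ are exactly the paper's argument (via \cite[Propositions 8.26 and 8.27]{Cabanes-book} and \cite[Proposition 12.2]{Digne-Michel91}); this gives the direction $\ord(s_p)=p^\beta\Rightarrow\lev(\chi)\le\beta$. Your lower bound $\lev(\hat s\psi_0)\ge\alpha$ when $\ord(s_p)=p^\alpha$, using nonvanishing of $p'$-degree characters at $p$-elements, is also correct --- it is in effect Lemma~\ref{lem:level of product character}, which the paper proves but does not use at this point.

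The gap is in your ``first step,'' the claim that $\chi$ is $\sigma$-fixed \emph{if and only if} $\hat s\psi_0$ is, deduced from ``Galois equivariance of Lusztig induction together with injectivity of $R_\bL^\bG$ on $\EC(\bL^F,s)$.'' The direction you actually need for the lower bound on $\lev(\chi)$ is: $\chi^\sigma=\chi$ implies $(\hat s\psi_0)^\sigma=\hat s\psi_0$. Equivariance at the level of class functions does hold (the Green-function coefficients are rational integers, so $(R_\bL^\bG\psi)^\sigma=R_\bL^\bG(\psi^\sigma)$), but $(\hat s\psi_0)^\sigma$ in general lies in a \emph{different} rational series $\EC(\bL^F,s')$ with $s'=s_p^{k}s_{p'}$, where $\bC_{\bG^\ast}(s')$ may strictly contain $\bL^\ast$; so the injectivity statement you invoke (which concerns the single series $\EC(\bL^F,s)$) does not apply, and equality $R_\bL^\bG((\hat s\psi_0)^\sigma)=R_\bL^\bG(\hat s\psi_0)$ does not by itself force $(\hat s\psi_0)^\sigma=\hat s\psi_0$. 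Character-level Galois equivariance of Jordan decomposition is precisely the delicate point: it is the content of \cite{SV20}, which requires connected center (or connected centralizer), hypotheses not assumed here. What the series-level information (compatibility of $R_\bL^\bG$ with rational series, formula \eqref{eq:1}, and the fusion argument) really yields is $\ord(s_p)\le p^{\lev(\chi)}$ --- that is, it re-derives Proposition~\ref{prop:technical-0}. The repair is therefore easy and is what the paper does: quote Proposition~\ref{prop:technical-0} for the lower bound $\lev(\chi)\ge\log_p\ord(s_p)$ and drop the ``only if'' half of your equivalence (and with it the need for your analysis of $\lev(\hat s\psi_0)$ from below); with that substitution your argument closes and coincides with the paper's proof.
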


\begin{proof} Of course (ii) follows from (i). For (i), using Proposition \ref{prop:technical-0}, we just need to
show that if $\ord(s_p)= p^\beta$ then $\lev({\chi})\leq\beta$, for
$\beta\in\{\alpha,\alpha-1\}$. Equivalently, we wish to show that if
$\ord(s_p)= p^\beta$ then $\chi$ is $\sigma_\beta$-fixed.

Since $\bL^\ast$ is a Levi subgroup of $\bG^\ast$ and $s\in
\bZ(\bL^\ast)^{F^\ast}$, according to \cite[Proposition
8.26]{Cabanes-book}, the multiplication by $\widehat{s}$, a certain
linear character of $\Irr(\bL^F)$ naturally defined by $s$ (see
\cite[(8.19)]{Cabanes-book}), induces a bijection between
$\mathcal{E}(\bL^F,1)$ and $\mathcal{E}(\bL^F,s)$. Note that the
correspondence $s\mapsto \widehat{s}$ from $\bZ(\bL^\ast)^{F^\ast}$
to the linear characters of $\Irr(\bL^F)$ is a group isomorphism.
Therefore,
\[
\ord(s_p)=\ord(\widehat{s})_p=p^{\lev(\widehat{s})},
\]
and it follows that \[ \widehat{s} \text { is }
\sigma_\beta-\text{fixed}.
\]
This in turn implies that \[\text{every } p'-\text{degree character
} \psi\in \mathcal{E}(\bL^F,s) \text{ is } \sigma_\beta-\text{fixed}
\]
since $\psi$ is of the from $\lambda \widehat{s}$, where $\lambda$
is a $p'$-degree character in $\mathcal{E}(\bL^F,1)$, which is
$\sigma_\beta$-fixed by the hypothesis.

On the other hand, by \cite[Proposition 8.27]{Cabanes-book}, the
Lusztig induction functor $\mathbf{R}_{\bL}^{\bG}$ induces a
bijection
\[
\varepsilon_{\bG} \varepsilon_{\bL}\mathbf{R}_{\bL}^{\bG}:
\EC(\bL^F,s)\rightarrow \EC(\bG^F,s),
\]
where $\varepsilon_{\bG}:=(-1)^{\sigma(\bG)}$ with $\sigma(\bG)$ the
$\mathbb{F}_q$-rank of $\bG$. Suppose that \[\chi= \varepsilon_{\bG}
\varepsilon_{\bL}\mathbf{R}_{\bL}^{\bG}(\psi)\] for some $\psi\in
\EC(\bL^F,s)$, which must be $p'$-degree since $\chi$ is. By the
formula for $\mathbf{R}_{\bL}^{\bG}$ (see \cite[Proposition
12.2]{Digne-Michel91}), each value of $\chi$ is a linear combination
with integer coefficients of values of $\psi$. Since $\psi$ is
$\sigma_\beta$-fixed, we deduce that $\chi$ is $\sigma_\beta$-fixed,
as desired.
\end{proof}

In Theorem \ref{thm:technical}, the assumption on the centralizer
$\bC_{\bG^\ast}(s)$ being Levi can be dropped in the case $\bG$ has
connected center, by using our Proporsition \ref{prop:technical-0}
and a result of Srinivasan and Vinroot \cite{SV20} on the effect of
Galois automorphisms on irreducible characters of $G:=\bG^F$ via
their Jordan decomposition. We thank the referee for bringing this
discussion to our attention.

Assume that $\bG$ is a connected reductive group with connected
center and $F$ a suitable Frobenius endomorphism. Let $\mathbf{e}$
be the exponent of $G$ (which turns out to be the same as the
exponent of the dual group $G^\ast$) and $\sigma\in
Gal(\QQ_\mathbf{e}/\QQ)$ such that $\sigma(e^{2\pi
i/\mathbf{e}})=e^{2\pi i r/\mathbf{e}}$ for some $r$ coprime to
$\mathbf{e}$. The main result of \cite{SV20} asserts that if
$\chi\in\Irr(G)$ has Jordan decomposition $(s,\psi)$, then
$\chi^{\sigma}$ has Jordan decomposition $(s^r,\psi^\sigma)$. (In
fact, according to the remark in page 18 of the \emph{loc. cit.},
this appears to hold true even when the hypothesis is relaxed to
$\bC_{\bG^\ast}(s)$ being connected. The statement, however, was not
formally proved yet.)

\begin{theorem}\label{thm:extra}
Let $\alpha\geq 2$, $\bG$, $G$, $G^\ast$, $p$ and $\chi$ as in
Theorem \ref{thm:technical}. Suppose that $\bG$ has connected
center, ${\chi}\in\EC({G},s)$ for some ($p$-central) semisimple
element $s\in G^\ast$, and all the $p'$-degree unipotent characters
of $\bC_{G^\ast}(s)$ have $p$-rationality level at most $\alpha-1$.
Then $\lev({\chi})=\alpha$ if and only if $\ord(s_p)= p^\alpha$.
\end{theorem}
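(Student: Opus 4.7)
The plan is to adapt the proof of Theorem \ref{thm:technical}, replacing the Lusztig-induction step (which required $\bC_{\bG^\ast}(s)$ to be a Levi subgroup of $\bG^\ast$) by the Srinivasan--Vinroot description of the Galois action on $\Irr(G)$ recalled just above the statement. Since $\bZ(\bG)$ is connected, that description applies: writing $(s,\psi)$ for the Jordan decomposition of $\chi$, every $\sigma\in\Gal(\QQ_\mathbf{e}/\QQ)$ acting on $\mathbf{e}$-th roots of unity as $\zeta\mapsto\zeta^r$ sends $\chi$ to the character with Jordan decomposition $(s^r,\psi^\sigma)$. The $p'$-degree assumption on $\chi$ passes to $\psi$ (since $s$ is $p$-central, $[G^\ast:\bC_{G^\ast}(s)]$ is coprime to $p$), so the hypothesis on $p'$-degree unipotent characters of $\bC_{G^\ast}(s)$ is the one to apply.

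The key reduction I would establish is: for each $\beta\in\{\alpha-1,\alpha\}$, if $\ord(s_p)\le p^\beta$ then $\chi$ is $\sigma_\beta$-fixed. Writing $|G|=p^am$ with $(p,m)=1$, the Chinese remainder theorem yields an integer $r$ coprime to $|G|$ with
\[
r\equiv 1\pmod m\quad\text{and}\quad r\equiv 1+p^\beta\pmod{p^a},
\]
so that $\sigma_\beta$ restricts to $\QQ_{|G|}$ as $\zeta\mapsto\zeta^r$. Since $|s_{p'}|$ divides $m$ and, by assumption, $|s_p|$ divides $p^\beta$, we obtain
\[
s^r=s_{p'}^r\cdot s_p^{1+p^\beta}=s_{p'}\cdot s_p=s,
\]
so $\psi^{\sigma_\beta}$ still lives in $\Irr(\bC_{G^\ast}(s))$ and can be compared with $\psi$ directly. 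The hypothesis forces $\lev(\psi)\le\alpha-1\le\beta$, hence $\psi^{\sigma_\beta}=\psi$, and then uniqueness of Jordan decomposition gives $\chi^{\sigma_\beta}=\chi$.

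With this reduction, both implications of the theorem follow cleanly. For the direction $\lev(\chi)=\alpha\Rightarrow\ord(s_p)=p^\alpha$, Proposition \ref{prop:technical-0} gives $\ord(s_p)\le p^\alpha$, while $\ord(s_p)\le p^{\alpha-1}$ would contradict $\lev(\chi)=\alpha$ via the reduction at $\beta=\alpha-1$. Conversely, the reduction at $\beta=\alpha$ yields $\lev(\chi)\le\alpha$, and the matching lower bound is obtained from Proposition \ref{prop:technical-0} when $\lev(\chi)\ge 1$ and from running the same argument of that proposition with $\alpha=1$ when $\lev(\chi)=0$ (the latter forces $\ord(s_p)\le p$, incompatible with $\ord(s_p)=p^\alpha$ and $\alpha\ge 2$).

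The main obstacle I foresee is ensuring that the chosen Galois element literally fixes the representative $s$, not merely its $G^\ast$-conjugacy class, so that the Jordan decompositions of $\chi$ and $\chi^{\sigma_\beta}$ can be compared inside the same centralizer $\bC_{G^\ast}(s)$; the CRT recipe for $r$ is precisely what delivers $s^r=s$ and avoids this pitfall. A minor wrinkle occurs when $p=2$ and $\alpha=2$ (so $\beta=1$), where level at most $1$ actually means $2$-rational, but then $\psi^{\sigma_1}=\psi$ holds a fortiori and the argument proceeds unchanged.
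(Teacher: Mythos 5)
Your proposal is correct and follows essentially the same route as the paper: Proposition \ref{prop:technical-0} for the upper bound on $\ord(s_p)$, and the Srinivasan--Vinroot compatibility of the Galois action with Jordan decomposition, combined with the hypothesis on $p'$-degree unipotent characters of $\bC_{G^\ast}(s)$, to show $\chi$ is $\sigma_\beta$-fixed whenever $\ord(s_p)\le p^\beta$. Your CRT choice of $r$ with $s^r=s$ simply makes explicit a step the paper leaves implicit (it invokes (\ref{eq:1}) to see the series $\EC(G,s)$ is fixed), so the two arguments coincide in substance.
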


\begin{proof} For the `only if' implication, assume that $\lev({\chi})=\alpha\geq 2$.
Then it follows from Proporsition \ref{prop:technical-0} that
$\ord(s_p)\leq p^\alpha$. Assume to the contrary that $\ord(s_p)\leq
p^{\alpha-1}$. Then, by (\ref{eq:1}), the Lusztig series
$\EC({G},s)$ is $\sigma_{\alpha-1}$-fixed. The result of \cite{SV20}
described above and the assumption on the $p'$-degree unipotent
characters of $\bC_{G^\ast}(s)$ having $p$-rationality level at most
$\alpha-1$ then imply that $\lev(\alpha)\leq \alpha-1$, a
contradiction. The same arguments also show that if $\ord(s_p)=
p^\alpha$ then $\lev({\chi})=\alpha$, and thus the proof is
complete.
\end{proof}

We will also need the following well-known result.

\begin{lemma}\label{lem:restriction-center}
Let $s\in G^\ast$ be a semisimple element.
\begin{enumerate}[\rm(i)]
\item $\chi\hspace{-3pt}\downarrow _{\bZ(G)}$ is the same for all
$\chi\in\EC(G,s)$. In fact, $\chi\hspace{-3pt}\downarrow
_{\bZ(G)}=\chi(1)\cdot \theta\hspace{-3pt}\downarrow _{\bZ(G)}$ for
all $\chi\in\EC(G,s)$ and all pairs $(\bT,\theta)$ in the geometric
conjugacy class determined by $s$.

\item Assume that $|\bZ(G)|=|G^\ast/(G^\ast)'|$. Then all characters
in $\EC(G,s)$ restricts trivially to $\bZ(G)$ if and only if $s\in
(G^\ast)'$.
\end{enumerate}
\end{lemma}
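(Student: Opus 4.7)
For part (i), I would invoke the classical compatibility formula of Deligne--Lusztig characters with central elements: for any $F$-stable maximal torus $\bT\subseteq\bG$, any $\theta\in\Irr(\bT^F)$, and any $z\in\bZ(\bG)^F\subseteq\bT^F$, one has
\[
R_\bT^\bG\theta(zg)\;=\;\theta(z)\,R_\bT^\bG\theta(g)
\]
for all $g\in G$ (see e.g.\ \cite[Proposition 12.6]{Digne-Michel91}). Setting $g=1$ gives $R_\bT^\bG\theta(z)=\theta(z)\cdot R_\bT^\bG\theta(1)$, so the restriction of $R_\bT^\bG\theta$ to $\bZ(\bG)^F$ equals $R_\bT^\bG\theta(1)$ times the linear character $\theta\downarrow_{\bZ(\bG)^F}$. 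A standard consequence of the definition of geometric conjugacy -- using that $\bZ(\bG)$ is contained in every maximal torus and that the norm map is trivial on $\bZ(\bG)^F$ -- is that if $(\bT,\theta)$ and $(\bT',\theta')$ lie in the same geometric class, then $\theta\downarrow_{\bZ(\bG)^F}=\theta'\downarrow_{\bZ(\bG)^F}$. Since every $\chi\in\EC(G,s)$ is a $\mathbb{Z}$-linear combination of such $R_\bT^\bG\theta$'s, evaluating on $\bZ(G)\subseteq\bZ(\bG)^F$ and comparing with Schur's lemma (which forces $\chi\downarrow_{\bZ(G)}$ to be $\chi(1)$ times a single linear character) yields $\chi\downarrow_{\bZ(G)}=\chi(1)\cdot\theta\downarrow_{\bZ(G)}$ for any representative $(\bT,\theta)$ of the geometric class.

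For part (ii), apply (i) to reduce the question to whether $\theta\downarrow_{\bZ(G)}$ is trivial. The assignment $s\mapsto \theta\downarrow_{\bZ(G)}$, built from the torus duality $\Irr(\bT^F)\leftrightarrow(\bT^\ast)^{F^\ast}$ together with the inclusion $\bZ(\bG)\hookrightarrow\bT$, defines a group homomorphism $\phi\colon G^\ast\to\Irr(\bZ(G))$. Because the target is abelian, $(G^\ast)'\subseteq\ker\phi$, so $\phi$ descends to $\bar\phi\colon G^\ast/(G^\ast)'\to\Irr(\bZ(G))$. The ``if'' direction of (ii) is then immediate. For the ``only if'' direction, one needs $\bar\phi$ to be injective, which under the numerical hypothesis $|\bZ(G)|=|G^\ast/(G^\ast)'|$ is equivalent to surjectivity. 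Surjectivity is the standard duality statement: any character of the subtorus $\bZ(\bG)$ extends to the ambient $\bT$, and the resulting $\theta$ corresponds under duality to a semisimple element of $G^\ast$, so every linear character of $\bZ(G)$ is realized as some $\theta\downarrow_{\bZ(G)}$.

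The main obstacle is the careful bookkeeping around the duality needed in (ii): one has to verify that under the duality $\Irr(\bT^F)\cong(\bT^\ast)^{F^\ast}$, the restriction map $\theta\mapsto\theta\downarrow_{\bZ(\bG)^F}$ corresponds to the natural projection from $\bT^\ast$ onto the quotient modulo the subgroup generated by coroots (i.e., the image in $G^\ast/(G^\ast)'$), and to reconcile the possibly strict inclusion $\bZ(\bG)^F\subseteq\bZ(G)$ with the cardinality hypothesis. Once this identification is in place, both parts follow formally from the Deligne--Lusztig compatibility and the general principles of duality for reductive groups.
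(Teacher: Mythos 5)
Your outline of part (i) via the formula $R_\bT^\bG\theta(zg)=\theta(z)R_\bT^\bG\theta(g)$ for $z\in\bZ(\bG)^F$ is reasonable, but the step you use to make the restriction well defined over the whole geometric class is not: the norm map is \emph{not} trivial on $\bZ(\bG)^F$. For $z\in\bZ(\bG)^F$ one has $N_{F^n/F}(z)=z^n$, so the definition of geometric conjugacy only yields $\theta(z)^n=\theta'(z)^n$ for the relevant $n$, which does not give $\theta\downarrow_{\bZ(\bG)^F}=\theta'\downarrow_{\bZ(\bG)^F}$ without a further argument (in practice one identifies $\theta\downarrow_{\bZ(\bG)^F}$ dually with the image of $s$, which is invariant data of the class). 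The paper does not reprove this at all; part (i) is simply quoted from \cite[Lemma 2.2]{Malle07}, so if you insist on a self-contained proof this step needs to be repaired.

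The more serious gap is in part (ii). Your argument is built on a group homomorphism $\phi\colon G^\ast\to\Irr(\bZ(G))$, $s\mapsto\theta\downarrow_{\bZ(G)}$, but the recipe you give only defines $\phi$ on semisimple elements, after choosing a maximal torus through $s$, and semisimple elements do not form a subgroup of $G^\ast$; well-definedness, independence of the chosen torus, and multiplicativity are precisely the duality bookkeeping you defer to your final paragraph as ``the main obstacle.'' Without that, neither the inclusion $(G^\ast)'\subseteq\ker\phi$ (which you also need for the ``if'' direction, cited in the paper from \cite[Lemma 4.4(ii)]{Navarro-Tiep13}) nor the induced map $\bar\phi$ on $G^\ast/(G^\ast)'$ — the object your counting argument (surjective plus equal cardinalities implies injective) is applied to — is available, so the heart of the ``only if'' direction is left unproved. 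The paper's proof avoids any global construction: it fixes an $F^\ast$-stable maximal torus $\bT^\ast$ with $s\in(\bT^\ast)^{F^\ast}$, takes $\theta\in\Irr(\bT^F)$ corresponding to $s$ under the duality isomorphism $\Delta$ of \cite[Proposition 4.4.1]{Carter85}, uses (i) to see that $\theta$ is trivial on $\bZ(G)$, and then invokes the fact, taken from the proof of \cite[Lemma 4.4]{Navarro-Tiep13}, that under the hypothesis $|\bZ(G)|=|G^\ast/(G^\ast)'|$ the isomorphism $\Delta$ restricts to an isomorphism between $(\bT^\ast)^{F^\ast}\cap(G^\ast)'$ and the subgroup of characters of $\bT^F$ trivial on $\bZ(G)$; this gives $s\in(G^\ast)'$ immediately. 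To make your version complete you would have to actually prove the compatibility of restriction-to-center with the finite-level quotient $G^\ast/(G^\ast)'$ (equivalently, the restricted-$\Delta$ statement above), rather than flag it as remaining bookkeeping.
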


\begin{proof}
(i) follows from \cite[Lemma 2.2]{Malle07} and the `if' direction of
(ii) is \cite[Lemma 4.4(ii)]{Navarro-Tiep13}.

Suppose that all characters in $\EC(G,s)$ restricts trivially to
$\bZ(G)$. Let $\bT^\ast$ be an $F^\ast$-stable maximal torus of
$\bG^\ast$ such that $s\in (\bT^\ast)^{F^\ast}$. Let $\bT$ be the
$F$-stable maximal torus of $\bG$ in duality with $\bT^\ast$. The
duality map then gives rise to an isomorphism $\Delta$ between
$(\bT^\ast)^{F^\ast}$ and the character group $\Irr(\bT^F)$ (see
\cite[Proposition 4.4.1]{Carter85}). Suppose that $\theta\in
\Irr(\bT^F)$ corresponds to $s$ under $\Delta$. Then, by the
definition of $\Delta$, $(\bT,\theta)$ is in the geometric conjugacy
class determined by $s$. Using (i), we know that $\theta$ is trivial
on $\bZ(G)$.

Under our assumption $|\bZ(G)|=|G^\ast/(G^\ast)'|$, the isomorphism
$\Delta$ restricts to an isomorphism between
$(\bT^\ast)^{F^\ast}\cap (G^\ast)'$ and the group of those
characters in $\Irr(\bT^F)$ that are trivial on $\bZ(G)$ (see the
proof of \cite[Lemma 4.4]{Navarro-Tiep13}). Therefore, the
conclusion of the previous paragraph implies that $s\in (G^\ast)'$.
\end{proof}

We end this section by solving Conjecture
\ref{conj:continuity2-repeated} when $p=2$ for groups not of type
$A$ and $E_6$. These two types will be handled in the next two
sections.

In what follows, we use $\epsilon$ for either $\pm$ or $\pm1$,
depending on the context. We also use $PSL^-$ for $PSU$ and $E_6^-$
for ${}^2E_6$.

\begin{proposition}\label{prop:linhtinh}
Conjecture \ref{conj:continuity2-repeated} holds true when $p=2$ and
$S$ is one of the following groups in odd characteristic:
\begin{enumerate}[\rm(i)]
\item $PSL^\epsilon_n(q)$, where $q\equiv -\epsilon\,(\bmod\,4)$, or
$q\equiv \epsilon\,(\bmod\,4)$ and $n$ is a $2$-power.

\item $E_6^\epsilon(q)$, where $q\equiv -\epsilon\,(\bmod\,4)$.

\item $PSp_{2n}(q), P\Omega_{2n+1}(q), P\Omega_{2n}^\pm(q), G_2(q), F_4(q), E_7(q), E_8(q), {}^2G_2(q),
{}^3D_4(q)$.
\end{enumerate}
\end{proposition}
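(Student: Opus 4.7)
The plan is to establish the proposition via the Lusztig--Jordan decomposition together with Theorem \ref{thm:technical}, by constructing explicit semisimple parameters of each desired $2$-power order. Write $S = G/\bZ(G)$ with $G = \bG^F$ for $\bG$ the simply connected simple algebraic group of the relevant type, and let $(\bG^\ast, F^\ast)$ be dual with $G^\ast := (\bG^\ast)^{F^\ast}$ of adjoint type. An $X$-invariant odd-degree character $\chi$ of $S$ of $2$-rationality level $\alpha \geq 2$ lifts to an odd-degree character of $G$, still denoted $\chi$, trivial on $\bZ(G)$ and lying in some Lusztig series $\EC(G,s)$ for a $2$-central semisimple $s \in G^\ast$; Lemma \ref{lem:restriction-center}(ii) then gives $s \in (G^\ast)'$.

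The first step is to check the hypotheses of Theorem \ref{thm:technical} for $(\bG^\ast, s)$: that $\bC_{\bG^\ast}(s)$ is a Levi subgroup of $\bG^\ast$, and that every $2'$-degree unipotent character of the corresponding finite Levi has $2$-rationality level at most $\alpha - 1$. The latter is a known property of unipotent characters of finite reductive groups (they are almost $2$-rational), so it holds automatically for $\alpha \geq 2$. The Levi condition on $\bC_{\bG^\ast}(s)$ is where the case hypotheses matter: for the types in (iii) it follows from the structure of the fundamental group of $\bG^\ast$ (trivial or with controlled $2$-torsion), which forces centralizers of $2$-central semisimple elements to be connected Levi subgroups containing a Sylow $2$-torus; for the cases in (i) and (ii), the hypothesis $q \equiv -\epsilon \pmod 4$ (or, in (i), the alternative that $n$ is a $2$-power) forces the Sylow $2$-subgroup of $G^\ast$ to sit inside a Levi of the form $GL^\epsilon_{n_1}(q) \times \cdots \times GL^\epsilon_{n_k}(q)$ (or its $E_6^\epsilon$-analogue), whence centralizers of $2$-central elements in $G^\ast$ are Levi. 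Applying Theorem \ref{thm:technical}(i) we deduce that $\ord(s_2) = 2^\alpha$.

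For each $\beta$ with $2 \leq \beta \leq \alpha$, set
\[
t_\beta := s_2^{2^{\alpha-\beta}} s_{2'}.
\]
Then $t_\beta$ is $2$-central, $\ord((t_\beta)_2) = 2^\beta$, $t_\beta \in (G^\ast)'$, and $\bC_{\bG^\ast}(t_\beta) \supseteq \bC_{\bG^\ast}(s)$ is again a Levi subgroup (it contains the Levi $\bC_{\bG^\ast}(s)$ together with a Sylow $2$-torus). The semisimple character $\chi_{t_\beta} \in \EC(G, t_\beta)$, namely the character matching the trivial unipotent character of $\bC_{G^\ast}(t_\beta)^{F^\ast}$ under the Jordan decomposition, has odd degree equal to $[G^\ast : \bC_{G^\ast}(t_\beta)]_{r'}$, is trivial on $\bZ(G)$ by Lemma \ref{lem:restriction-center}(ii), and descends to $S$. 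A second application of Theorem \ref{thm:technical}(i) yields $\lev(\chi_{t_\beta}) = \beta$.

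For the $X$-invariance of $\chi_{t_\beta}$: the $X$-invariance of $\chi$ together with (\ref{eq:2}) forces the $G^\ast$-class of $s$, hence those of $s_2$, $s_{2'}$, and $t_\beta$, to be $X^\ast$-stable, so the series $\EC(G, t_\beta)$ is $X$-stable. The trivial unipotent character of $\bC_{G^\ast}(t_\beta)^{F^\ast}$ is canonically fixed by every automorphism of that group, and therefore so is its Jordan partner $\chi_{t_\beta}$. The main obstacle in this plan lies in the first step, namely the case-by-case verification that $\bC_{\bG^\ast}(s)$ is a Levi subgroup under the stated hypotheses on type and on $q \bmod 4$; this condition fails precisely for $PSL^\epsilon_n(q)$ with $q \equiv \epsilon \pmod 4$ and $n$ not a $2$-power, and for $E_6^\epsilon(q)$ with $q \equiv \epsilon \pmod 4$, which is why those families are excluded from the present proposition and treated separately in Sections \ref{sec:linear-unitary} and \ref{sec:E6}.
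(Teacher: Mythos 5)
Your proposal takes a completely different route from the paper, and the route has genuine gaps. The paper's proof is a two-line vacuity argument: for every group in the list (under the stated congruence conditions), the abelianization $P/P'$ of a Sylow $2$-subgroup is elementary abelian (Sections 3 and 4 of \cite{Navarro-Tiep16}, or the simplified proof in \cite{Malle19}), so by Theorem \ref{thm:Malle-Navarro-Tiep19} every odd-degree irreducible character of $S$ has $2$-rationality level at most $1$, and the hypothesis of Conjecture \ref{conj:continuity2-repeated} is never satisfied. This also shows that your closing diagnosis of the case division is wrong: the excluded families ($PSL_n^\epsilon(q)$ with $q\equiv\epsilon\pmod 4$ and $n$ not a $2$-power, and $E_6^\epsilon(q)$ with $q\equiv\epsilon\pmod 4$) are exactly those where $\exp(P/P')$ can exceed $2$, i.e.\ where characters of level $\geq 2$ genuinely exist and a construction is actually needed; the dividing line is not whether centralizers are Levi subgroups.

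Beyond misidentifying the mechanism, several steps of your construction are false or unjustified as stated. For the groups in (iii) the centralizer of a $2$-central semisimple element is typically \emph{not} a Levi subgroup: for $G=\Sp_{2n}(q)$ the dual group is $SO_{2n+1}(q)$, and the centralizer of a noncentral involution there has the form $SO_{2a}^{\pm}(q)\times SO_{2b+1}(q)$, which is not a Levi of type $B_n$, so the hypothesis of Theorem \ref{thm:technical} fails and your first application of it is not available. The inference that $\bC_{\bG^\ast}(t_\beta)$ is a Levi because it ``contains the Levi $\bC_{\bG^\ast}(s)$ together with a Sylow $2$-torus'' is not a valid argument (containing a Levi does not make a reductive subgroup a Levi). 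The blanket claim that all $2'$-degree unipotent characters of the relevant finite Levis are almost $2$-rational is asserted without proof; the paper only verifies the needed rationality for the specific centralizer types arising in type $A$ and in $E_6^\epsilon$ (products of $GL^\pm$, and $D_5T_1$), citing \cite{Lusztig02}. Finally, in the simply connected groups considered, centralizers in $\bG^\ast$ can be disconnected, so the uniqueness of the ``semisimple character'' in $\EC(G,t_\beta)$ and the equivariance of Jordan decomposition that you invoke for $X$-invariance require justification (the paper handles the analogous points in Sections \ref{sec:linear-unitary} and \ref{sec:E6} using regular embeddings and connectedness of the specific centralizers involved). None of this machinery is needed for the present proposition.
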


\begin{proof}
The commutator factor group of a Sylow $2$-subgroup of these groups
is elementary abelian. (See Sections 3 and 4 of
\cite{Navarro-Tiep16} for the proof or \cite[Proof of Proposition
3.2]{Malle19} for a simplified proof.) Therefore, by Theorem
\ref{thm:Malle-Navarro-Tiep19}, every $p'$-degree irreducible
character of $S$ has $2$-rationality level at most $1$.
\end{proof}


\section{Linear and unitary groups}\label{sec:linear-unitary}

In this section we prove Theorem \ref{thm:continuity2} for
$S=PSL_n^\epsilon(q)$.

\subsection{$p$-Rationality level of $p'$-degree characters of $GL^\epsilon_n(q)$ and $SL^\epsilon_n(q)$}
Using Theorem \ref{thm:technical}, we can control the
$p$-rationality level of $p'$-degree characters of
$GL^\epsilon_n(n)$ or their projective versions. However, as we will
see in this subsection, it is not so easy to do the same for
$SL_n^\epsilon$ and $PSL_n^\epsilon(q)$.

\begin{proposition}\label{prop:technical}
Let $\alpha\in\ZZ^{\geq 2}$. Let $\widetilde{G}=PGL^\epsilon_n(q)$
or $GL_n^\epsilon(q)$, $p \nmid q$ a prime and
$\widetilde{\chi}\in\Irr_{p'}(\widetilde{G})$. Suppose that
$\widetilde{\chi}\in\EC(\widetilde{G},s)$ for some semisimple
element $s\in SL_n^\epsilon(q)$ or $GL_n^\epsilon(q)$, respectively.
Then $\lev(\widetilde{\chi})=\alpha$ if and only if $\ord(s_p)=
p^\alpha$.
\end{proposition}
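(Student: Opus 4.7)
The plan is to apply Theorem~\ref{thm:technical} in each of the two cases. For $\widetilde{G} = GL_n^\epsilon(q)$ I will take $\bG = GL_n$, so that $\bG^\ast = GL_n$; for $\widetilde{G} = PGL_n^\epsilon(q)$ I will take $\bG = PGL_n$, so that $\bG^\ast = SL_n$. In either case $\bG$ has connected center, and Proposition~\ref{prop:technical-0} already provides that $s$ is $p$-central with $\ord(s_p) \le p^{\lev(\widetilde{\chi})}$. The remaining task is to verify the two structural hypotheses of Theorem~\ref{thm:technical}: that $\bL^\ast := \bC_{\bG^\ast}(s)$ is a Levi subgroup, and that the $p'$-degree unipotent characters of the dual Levi $\bL^F$ have $p$-rationality level at most $\alpha - 1$.

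The Levi condition is standard in type $A$. If $\bG^\ast = GL_n$, the generalized eigenspace decomposition of $s$ yields $\bC_{\bG^\ast}(s) \cong \prod_i GL_{m_i}$, which is a standard Levi. If $\bG^\ast = SL_n$, then $SL_n$ is simply connected, so $\bC_{\bG^\ast}(s)$ is connected by Steinberg's theorem, and it equals $\bC_{GL_n}(s) \cap SL_n \cong S\bigl(\prod_i GL_{m_i}\bigr)$, again a Levi subgroup of $SL_n$. In both cases the $F^\ast$-stable structure transfers to an $F$-stable dual Levi $\bL$ in $\bG$.

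The unipotent-character condition is handled by classical facts in type $A$. The rational points of a Levi subgroup of a type $A$ algebraic group are, up to central isogeny, a direct product of general linear and unitary groups $GL^{\eta}_k(q^d)$. Unipotent characters of such factors are parameterized by partitions and take integer values, hence are $p$-rational of level $0$. In the $PGL$ case the unipotent characters are trivial on the central kernel of $GL_n \to PGL_n$ (this follows from Lemma~\ref{lem:restriction-center}(i) applied with $s=1$) and so descend to rational characters of $\bL^F$. Since $\alpha \ge 2$, level $0 \le \alpha - 1$, as required. Theorem~\ref{thm:technical}(i) then yields directly the equivalence $\lev(\widetilde{\chi}) = \alpha \Leftrightarrow \ord(s_p) = p^\alpha$. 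I expect no substantial obstacle in this proposition; the nontrivial content has been absorbed into Theorem~\ref{thm:technical}, and what remains is essentially a bookkeeping exercise in type $A$. This is also why the analogous statement for $SL_n^\epsilon$ and $PSL_n^\epsilon$, announced as harder in the subsection's preamble, will require additional work beyond a direct appeal to Theorem~\ref{thm:technical}.
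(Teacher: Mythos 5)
Your argument is correct, and for $\widetilde{G}=GL_n^\epsilon(q)$ it coincides with the paper's proof: take $\bG=\bG^\ast=GL_n(\overline{\FF_r})$, observe that $\bC_{\bG^\ast}(s)$ is a block-diagonal Levi whose group of rational points is a product of groups of type $GL_{m_i}^{\eta_i}(q^{d_i})$ with rational-valued unipotent characters, and invoke Theorem \ref{thm:technical}. The divergence is in the $PGL$ case. The paper does not rerun the machinery for the adjoint group: it deduces the statement for $PGL_n^\epsilon(q)$ from the one for $GL_n^\epsilon(q)$ by an inflation argument, since a $p'$-degree character in $\EC(PGL_n^\epsilon(q),s)$ with $s\in SL_n^\epsilon(q)$ inflates to a $p'$-degree character of $GL_n^\epsilon(q)$ lying in the series of $s$ viewed inside $GL_n^\epsilon(q)$, with the same field of values and the same $\ord(s_p)$. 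You instead apply Theorem \ref{thm:technical} directly with $\bG=PGL_n$ and $\bG^\ast=SL_n$, checking that $\bC_{SL_n}(s)=S\bigl(\prod_i GL_{m_i}\bigr)$ is a Levi subgroup (connected by Steinberg's theorem) and that the unipotent characters of the rational points of the dual Levi in $PGL_n$ are rational. Both routes work; the paper's inflation step is shorter because it sidesteps centralizers in $SL_n$ altogether, while yours is self-contained at the level of the adjoint group and makes the hypotheses of Theorem \ref{thm:technical} explicit there. One small point to tighten in your version: to conclude that \emph{all} $p'$-degree unipotent characters of $\bL^F\cong\widetilde{\bL}^F/Z^F$ are rational, it is not quite enough that unipotent characters of $\widetilde{\bL}^F$ are trivial on the center (Lemma \ref{lem:restriction-center}(i)) and hence descend; you also need that every unipotent character of $\bL^F$ arises by this descent, which is the standard compatibility of unipotent characters with quotients by central tori (cf. \cite[Proposition 13.20]{Digne-Michel91}, used in exactly this way in Proposition \ref{prop:technicalE6}). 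Also note that connectedness of $\bZ(\bG)$, which you mention, is not a hypothesis of Theorem \ref{thm:technical}; it is only needed for Theorem \ref{thm:extra}.
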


\begin{proof}
Suppose that $\bG:=GL_n(\overline{\FF_r})$, where $r$ is a prime
divisor of $q$, and $F$ acts on $\bG$ by raising each matrix entry
to the $q$-power for the linear case and further taking transpose
and inverse for the unitary case, so that $\bG^F=GL^\epsilon_n(q)$.
Note that $\bG=\bG^\ast$ is self dual. It is well-known that, for
each semisimple element $s\in \bG^F$, the centralizer $\bC_{\bG}(s)$
is a Levi subgroup consisting of block-diagonal matrices. Such a
Levi subgroup is $F$-stable and its fixed point group
$\bC_{\bG^F}(s)$ is isomorphic to a product of some suitable
subgroups $GL_{n_i}(q^{k_i})$ or $GU_{n_i}(q^{k_i})$ (see
\cite{Carter81,FS89}; they are all $GL$ when $\epsilon=+$ but
possibly a mix of $GL$ and $GU$ when $\epsilon=-$). Unipotent
characters of these subgroups are always rational-valued (see
\cite[Corollary 1.12]{Lusztig02} and the comment right after it).
Theorem \ref{thm:technical} (or Theorem \ref{thm:extra}) now can be
applied for the situation in consideration. Once the result is
verified for $GL^\epsilon$, it follows for $PGL^\epsilon$ by an
inflation argument.
\end{proof}

\begin{proposition}\label{prop:technical2}
Let $G:=SL_n^\epsilon(q)\triangleleft
GL_n^\epsilon(q):=\widetilde{G}$, $p \nmid q$ a prime and
$\chi\in\Irr_{p'}(G)$ with $\lev(\chi)\geq 2$. Suppose that $\chi$
is invariant under $\bO_p(\widetilde{G}/G)$. Then
\begin{enumerate}[\rm(i)]
\item $\lev(\chi)=\min\{\lev(\widetilde{\chi}):\widetilde{\chi}\in
\Irr(\widetilde{G}|\chi)\}$, where $\Irr(\widetilde{G}|\chi)$ is the
set of irreducible characters of $\widetilde{G}$ lying above $\chi$.

\item Suppose that $\widetilde{\chi}\in \Irr(\widetilde{G}|\chi)$
and $\widetilde{\chi}$ belongs to the Lusztig series associated to a
semisimple element $s\in \widetilde{G}$. We have
\[
\lev({\chi})=\min_{t\in \bZ(\widetilde{G})}\{\log_p(\ord(t s)_p)\}.
\]
\end{enumerate}
\end{proposition}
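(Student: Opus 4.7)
The plan is to prove (i) directly and then deduce (ii). For the upper bound in (i), namely $\lev(\chi)\leq \lev(\widetilde{\chi})$ for every $\widetilde{\chi}\in\Irr(\widetilde{G}|\chi)$, I will invoke Lemma \ref{lem:1}(iii), which requires that $\widetilde{\chi}(1)$ be coprime to $p$. To establish this, let $H$ denote the preimage in $\widetilde{G}$ of $\bO_p(\widetilde{G}/G)$ and let $T$ be the stabilizer of $\chi$ in $\widetilde{G}$. The hypothesis that $\chi$ is $\bO_p(\widetilde{G}/G)$-invariant forces $H\leq T$, so $[\widetilde{G}:T]$ divides $[\widetilde{G}:H]=|\widetilde{G}/G|_{p'}$; since $T/G$ is cyclic, Clifford theory gives $\widetilde{\chi}(1)=\chi(1)[\widetilde{G}:T]$, which is coprime to $p$.

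For the reverse inequality in (i), I will exhibit some $\widetilde{\chi}^*\in\Irr(\widetilde{G}|\chi)$ with $\lev(\widetilde{\chi}^*)\leq\alpha:=\lev(\chi)$. Since $G=SL_n^\epsilon(q)$ is perfect in the cases under consideration we have $o(\chi)=1$, and as $[H:G]$ is a $p$-power, Lemma \ref{lem:canonical extension} produces the unique canonical extension $\hat{\chi}\in\Irr(H)$ of $\chi$ with $p\nmid o(\hat{\chi})$, satisfying $\QQ(\hat{\chi})=\QQ(\chi)$ and hence $\lev(\hat{\chi})=\alpha$. The $p$-group $\mathcal{H}_\alpha:=\Gal(\QQ_{|\widetilde{G}|}/\QQ_{p^\alpha|\widetilde{G}|_{p'}})$ fixes $\chi$, and by the uniqueness in Lemma \ref{lem:canonical extension} (together with the fact that the Galois action preserves the order of the determinantal character) it also fixes $\hat{\chi}$. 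Let $T'$ be the stabilizer of $\hat{\chi}$ in $\widetilde{G}$; Clifford theory applied to the cyclic quotient $T'/H$ shows that $\hat{\chi}^{\widetilde{G}}$ decomposes into exactly $[T':H]$ distinct irreducible constituents, and $[T':H]$ divides $[\widetilde{G}:H]$, a $p'$-number. Since $\mathcal{H}_\alpha$ acts on this $p'$-sized set, it fixes some constituent $\widetilde{\chi}^*$. This character has $p'$-degree, lies above $\chi$, and satisfies $\lev(\widetilde{\chi}^*)\leq\alpha$, as desired.

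For (ii), I will combine (i) with Proposition \ref{prop:technical} and the description of Lusztig series under the regular embedding $G\hookrightarrow\widetilde{G}$. For each $t\in\bZ(\widetilde{G})$, the linear character $\hat{t}\in\Irr(\widetilde{G})$ attached to $t$ restricts trivially to the perfect group $G$, so $\widetilde{\chi}\cdot\hat{t}\in\Irr(\widetilde{G}|\chi)$; by the multiplication formula recalled just before Proposition \ref{prop:technical} (stemming from \cite[(8.19) and Proposition 8.26]{Cabanes-book}) we have $\widetilde{\chi}\cdot\hat{t}\in\EC(\widetilde{G},ts)$, and Proposition \ref{prop:technical} yields $\lev(\widetilde{\chi}\cdot\hat{t})=\log_p(\ord((ts)_p))$. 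By (i), $\log_p(\ord((ts)_p))\geq\lev(\chi)$ for every $t\in\bZ(\widetilde{G})$. Conversely, the character $\widetilde{\chi}^*$ produced in (i) lies in some series $\EC(\widetilde{G},s')$; standard facts about regular embeddings force the $\widetilde{G}^*$-class of $s'$ to coincide with that of $t^*s$ for some $t^*\in\bZ(\widetilde{G})$, and then Proposition \ref{prop:technical} gives $\log_p(\ord((t^*s)_p))=\lev(\widetilde{\chi}^*)=\lev(\chi)$. This establishes the identity.

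The principal obstacle lies in the lower bound of (i), specifically in verifying that $\mathcal{H}_\alpha$ fixes the canonical extension $\hat{\chi}$ and that the irreducible constituents of $\hat{\chi}^{\widetilde{G}}$ form a set of $p'$-cardinality. With these in place, the argument reduces to a standard fixed-point count for a $p$-group acting on a $p'$-set, combined with the rationality formula for $p'$-degree characters in a given Lusztig series provided by Proposition \ref{prop:technical}.
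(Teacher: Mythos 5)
Your proposal is correct and follows essentially the same route as the paper: part (i) rests on the canonical (field-preserving) extension of $\chi$ through the $p$-part of $\widetilde{G}/G$ together with a Clifford/Galois counting argument (your explicit $p$-group fixed-point count on the $[T':H]$ constituents of $\hat{\chi}^{\widetilde{G}}$ is the same mechanism the paper packages via Lemma \ref{lem:1} and the argument of Lemma \ref{lem:2}), and part (ii) uses, as the paper does, that characters of $\widetilde{G}$ with the same restriction to $G$ lie in Lusztig series $\EC(\widetilde{G},ts)$ for central $t$, combined with Proposition \ref{prop:technical}. One small point to make explicit: Proposition \ref{prop:technical} is stated only for levels $\alpha\geq 2$, so for those $t$ with $\ord((ts)_p)\leq p$ you should not quote it as an equality but instead note that it forces $\lev(\widetilde{\chi}\hat{t})\leq 1$, contradicting the bound $\lev(\widetilde{\chi}\hat{t})\geq\lev(\chi)\geq 2$ from part (i).
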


\begin{proof}
(i) Let $G \triangleleft M \triangleleft \widetilde{G}$ such that
$M/G = \bO_p(\widetilde{G}/G)$. By the assumption, $\chi$ is
$M$-invariant. By Lemma \ref{lem:canonical extension}, $\chi$
extends to some $\psi\in\Irr(M)$ with $\QQ(\chi)=\QQ(\psi)$. It
follows that $\lev(\chi)=\lev(\widetilde{\chi})$ for any
$\widetilde{\chi}\in\Irr(\widetilde{G})$ lying above $\psi$, by
Lemma \ref{lem:1}(iii). The result then follows by Lemma
\ref{lem:1}(iv).

\medskip

(ii) Note that the restrictions of all characters in
$\Irr(\widetilde{G}|\chi)$ to $G$ are the same. It is well-known
that if $\widetilde{\chi}_1\in\Irr(\widetilde{G})$ such that
$\widetilde{\chi}_1\hspace{-3pt}\downarrow_G=\widetilde{\chi}\hspace{-3pt}\downarrow_G$,
then $\widetilde{\chi}_1\in\EC(\widetilde{G},ts)$ for some $t\in
\bZ(\widetilde{G})$ (see, for instance, \cite[Lemma
4.1]{Kleshchev-Tiep09}). Conversely, if
$\widetilde{\chi}_1\in\EC(\widetilde{G},ts)$ is labeled by the same
unipotent character of
$\bC_{\widetilde{G}}(ts)=\bC_{\widetilde{G}}(s)$ labelling
$\widetilde{\chi}$, then
$\widetilde{\chi}_1\hspace{-3pt}\downarrow_G=\widetilde{\chi}\hspace{-3pt}\downarrow_G$.
The result follows by part (i) and Proposition \ref{prop:technical}.
\end{proof}

\subsection{Parametrization of odd-degree
characters of $GL_n^\epsilon(q)$} For the rest of this section we
assume that $p=2$ and $q$ is an odd prime power. We will use the
parametrization of odd-degree irreducible characters of
$GL_n^\epsilon(q)$ due to E. Giannelli, A. Kleshchev, G. Navarro,
and P.\,H. Tiep \cite[\S2 and \S5]{GKNT17}. This in turn is based on
the well-known Dipper-James parametrization of complex irreducible
characters of $GL_n(q)$.

We need some notation to describe the mentioned parametrization. Let
$n\in\ZZ^+$. The $2$-adic expansion of $n$ is a sum of the form
\[n=\sum_{i=0}^\infty a_i2^{i} \text{ with } a_i\in\{0,1\}.\] Each positive
term in the sum is called a $2$-adic part of $n$. We call an integer
decomposition of the form $n=n_1+n_2+\cdots+n_k$ such that $k\geq
1$, $n_1>n_2>\cdots>n_k\geq 1$ and every $2$-adic part of $n$ is a
$2$-adic part of some $n_i$ a \emph{proper decomposition} of $n$.

Fix $\epsilon\in\{\pm 1\}$. Let $C_{q-\epsilon}$ denote the cyclic
subgroup of order $q-\epsilon$ of $\FF_{q^2}^\times$ and
$\widetilde{C}_{q-\epsilon}$ denote the character group of
$C_{q-\epsilon}$. Let $\zeta$ be a generator of $C_{q-\epsilon}$ and
$\widetilde{\zeta}$ be a $(q-\epsilon)$-th primitive root of unity
in $\CC$. Then there is a bijection $s\mapsto \widehat{s}$ from
$C_{q-\epsilon}$ to $\widetilde{C}_{q-\epsilon}$ defined by
$\widehat{s}(\zeta)=\widetilde{\zeta}^i$, where $s=\zeta^i$.

For a partition $\lambda$ of $n\in\ZZ^+$, let $\varphi^\lambda$
denote the unipotent character of $GL^\epsilon_n(q)$ labeled by
$\lambda$ (see \cite[\S 13.8]{Carter85}). Let $S(s,\lambda)$ denote
the irreducible character of $GL^\epsilon_n(q)$ labeled by the
(semisimple and central) element $\diag(s,...,s)\in
GL_n^\epsilon(q)$ and the unipotent character $\varphi^\lambda$
under the Jordan decomposition of characters mentioned in the
previous section; that is, by \cite[Proposition
13.30]{Digne-Michel91},
\[
S(s,\lambda)=\varphi^\lambda(\widehat{s}\diamond \det).
\]
where $\det$ denotes the determinant function and $\diamond$ is just
the composition.

The following lemma is the combination of Lemmas 2.5 and 5.2 of
\cite{GKNT17}. Here $\circ$ denotes the Lusztig induction (note that
it is indeed the Harish-Chandra induction in the case $\epsilon=1$).

\begin{lemma}\label{lem:GKNT}
Let $q$ be an odd prime power. Every odd-degree irreducible
character $\widetilde{\chi}$ of $GL_n^\epsilon(q)$ can be uniquely
written in the form
\begin{equation}\label{eq:3}
\widetilde{\chi}=S(s_1,\lambda_1)\circ
S(s_2,\lambda_2)\circ\cdots\circ S(s_k,\lambda_k),
\end{equation}
where $n=n_1+n_2+\cdots+n_k$ is a proper decomposition of $n$ and,
for $1\leq i\leq k$, $s_i\in C_{q-\epsilon}$  are pairwise different
and $\lambda_i$ is partition of $n_i$ such that
$\varphi^{\lambda_i}(1)\in\Irr_{2'}(GL_{n_i}^\epsilon(q))$.
\end{lemma}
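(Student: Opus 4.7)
The plan is to apply the Jordan decomposition of characters to $\widetilde{G}=GL_n^\epsilon(q)$ and then extract the precise combinatorial conditions that force odd degree. Since $\widetilde{G}$ has connected center and is self-dual, every $\widetilde{\chi}\in\Irr(\widetilde{G})$ corresponds under the Jordan bijection to a pair $(s,\mu)$ where $s\in\widetilde{G}$ is a semisimple element (up to conjugacy) and $\mu$ is a unipotent character of $C:=\bC_{\widetilde{G}}(s)$, with $\widetilde{\chi}(1)=[\widetilde{G}:C]_{r'}\cdot\mu(1)$, where $r$ is the defining characteristic. Because $r$ is odd, $\widetilde{\chi}(1)$ being odd is equivalent to the joint oddness of $[\widetilde{G}:C]$ and $\mu(1)$.

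The technical heart is a $2$-adic analysis of the centralizer index. Writing $s$ via its characteristic polynomial factorization, $C\cong\prod_i GL_{m_i}^{\epsilon_i}(q^{d_i})$, where the product is indexed by $\Gal(\overline{\FF_q}/\FF_q)$-orbits of eigenvalues. Using the lifting-the-exponent lemma to compute $\nu_2(q^{kd}-\epsilon^{kd})$, I would show that the oddness of $[\widetilde{G}:C]$ forces $d_i=1$ and $\epsilon_i=\epsilon$ for every $i$, so that each eigenvalue of $s$ lies in $C_{q-\epsilon}$. After aggregating equal eigenvalues, the resulting block sizes $n_1>n_2>\cdots>n_k$ must then satisfy the proper decomposition condition: geometrically, a Sylow $2$-subgroup of $\widetilde{G}$ sits block-diagonally in a Levi $\prod_i GL_{2^{a_i}}^\epsilon(q)$ with block sizes indexed by the $2$-adic parts of $n$, and demanding $C$ to contain such a Sylow is equivalent to saying the block sizes $n_i$ of $C$ are unions of $2$-adic parts of~$n$.

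Granted this, $C\cong\prod_{i=1}^k GL_{n_i}^\epsilon(q)$ and $s$ acts as a distinct scalar $s_i\in C_{q-\epsilon}$ on the $i$-th factor, so the unipotent character $\mu$ decomposes as $\varphi^{\lambda_1}\times\cdots\times\varphi^{\lambda_k}$ with $\mu(1)=\prod_i\varphi^{\lambda_i}(1)$, and the oddness of $\mu(1)$ reduces to the oddness of each $\varphi^{\lambda_i}(1)$. I would then identify $\widetilde{\chi}$ with $S(s_1,\lambda_1)\circ\cdots\circ S(s_k,\lambda_k)$ via the compatibility of Jordan decomposition with Lusztig induction from Levi subgroups: Lusztig-inducing the outer tensor product of the $S(s_i,\lambda_i)$ (whose Jordan labels match by construction) yields an irreducible character of $\widetilde{G}$ with Jordan label $(s,\mu)$, where irreducibility is forced by the distinctness of the $s_i$. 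Uniqueness follows from the injectivity of the Jordan parametrization together with the uniqueness of the grouping of equal eigenvalues of~$s$.

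The principal obstacle will be the second step, namely proving that ``$[\widetilde{G}:C]$ is odd'' is equivalent to ``$d_i=1$, $\epsilon_i=\epsilon$, and the multiplicities form a proper decomposition of $n$.'' This demands careful tracking of $2$-adic valuations inside $|\widetilde{G}|$ and $|C|$ together with an explicit wreath-product description of Sylow $2$-subgroups of $GL_n^\epsilon(q)$, and in principle requires separating the cases $q\equiv\epsilon\pmod 4$ and $q\equiv-\epsilon\pmod 4$. Fortunately the latter case is essentially trivialized here by Proposition~\ref{prop:linhtinh}, so the real content lies in the case $q\equiv\epsilon\pmod 4$ where the Sylow $2$-subgroups have the richest wreath structure.
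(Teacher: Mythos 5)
The paper does not actually reprove this statement: its proof is a one-line citation, quoting the result as the combination of Lemmas 2.5 and 5.2 of \cite{GKNT17}. Your plan --- Jordan decomposition for the connected-center group $\widetilde{G}=GL_n^\epsilon(q)$, a $2$-adic analysis of $[\widetilde{G}:\bC_{\widetilde{G}}(s)]$, and compatibility of Jordan decomposition with Lusztig induction to identify $\widetilde{\chi}$ with $S(s_1,\lambda_1)\circ\cdots\circ S(s_k,\lambda_k)$ and get uniqueness --- is a sound route and essentially retraces how such parametrizations are established in the literature. But as written it is a programme rather than a proof: the step you yourself flag as the principal obstacle (oddness of the index forces $d_i=1$, $\epsilon_i=\epsilon$, and forces the eigenvalue multiplicities to form a proper decomposition of $n$) is precisely the content of the lemma, and it is only promised, not carried out.

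The concrete gap is your treatment of the case $q\equiv-\epsilon\ (\bmod\ 4)$. Proposition \ref{prop:linhtinh} is a statement about the continuity conjecture for the simple groups $PSL_n^\epsilon(q)$ (all odd-degree characters there have $2$-rationality level at most $1$); it says nothing about the parametrization of odd-degree characters of $GL_n^\epsilon(q)$, which is what Lemma \ref{lem:GKNT} asserts for every odd $q$ and what Proposition \ref{prop:SL} invokes without restricting the congruence class of $q$. So you cannot discard that case by appealing to it. Moreover, that is exactly the case where your geometric step breaks down: when $q\equiv-\epsilon\ (\bmod\ 4)$ a Sylow $2$-subgroup of $GL_n^\epsilon(q)$ is \emph{not} contained in a split Levi $\prod_i GL_{2^{a_i}}^\epsilon(q)$ (already for $n=2$ it lies in the normalizer of the torus $C_{q^2-1}$ and has order strictly larger than the $2$-part of $(q-\epsilon)^2$), so the equivalence ``odd index $\Leftrightarrow$ $C$ contains the block-diagonal Sylow'' cannot be argued by containment in that Levi. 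The conclusion of the lemma is nevertheless uniform in odd $q$: a direct valuation computation (e.g.\ $\nu_2(q^{2m}-1)=\nu_2(q^2-1)+\nu_2(m)$ applied to $|GL_{m}^{\pm}(q^{d})|$ versus $|GL_n^\epsilon(q)|$, or the normalizer-of-torus description of the Sylow $2$-subgroup) shows in both congruence classes that odd index forces all eigenvalues into $C_{q-\epsilon}$ with multiplicities forming a proper decomposition. That computation, in both cases, is what your write-up still owes; once it is supplied, the remaining steps (factorization of the unipotent part, irreducibility of the Lusztig induction for pairwise distinct $s_i$, and uniqueness from the injectivity of the Jordan parametrization) go through as you describe.
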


We also need the following.

\begin{lemma}\label{lem:abc}
Let $q$ be an odd prime power. Then every
$\widetilde{\chi}\in\Irr_{2'}(GL^\epsilon_n(q))$ restricts
irreducibly to $SL_n^\epsilon(q)$. Furthermore, if $n$ is not a
$2$-power, then every ${\chi}\in\Irr_{2'}(SL^\epsilon_n(q))$ is
extendible to $GL^\epsilon_n(q)$.
\end{lemma}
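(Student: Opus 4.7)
The plan is to deploy Clifford theory together with the explicit parametrization from Lemma~\ref{lem:GKNT}. For the first assertion, since $GL_n^\epsilon(q)/SL_n^\epsilon(q)\cong C_{q-\epsilon}$ is cyclic, a standard Clifford-Gallagher consequence is that the number of irreducible constituents of $\widetilde{\chi}\downarrow_{SL_n^\epsilon(q)}$ equals the order of the tensor stabilizer
\[T(\widetilde{\chi}):=\{\lambda\in\Irr(GL_n^\epsilon(q)/SL_n^\epsilon(q)):\widetilde{\chi}\otimes\lambda=\widetilde{\chi}\},\]
so it suffices to show $T(\widetilde{\chi})=\{1\}$. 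Writing $\widetilde{\chi}=S(s_1,\lambda_1)\circ\cdots\circ S(s_k,\lambda_k)$ via Lemma~\ref{lem:GKNT}, and identifying the linear characters of $GL_n^\epsilon(q)/SL_n^\epsilon(q)$ with $\{\widehat{t}\circ\det:t\in C_{q-\epsilon}\}$, the relation $S(s,\mu)=\varphi^\mu\cdot(\widehat{s}\circ\det)$ together with the compatibility of tensoring by a linear character with Lusztig induction yields
\[\widetilde{\chi}\otimes(\widehat{t}\circ\det)=S(ts_1,\lambda_1)\circ\cdots\circ S(ts_k,\lambda_k).\]
The uniqueness statement in Lemma~\ref{lem:GKNT}, together with the strict inequalities $n_1>n_2>\cdots>n_k$ in a proper decomposition (so that each $\lambda_i$ is identified unambiguously by the size of its partition), then forces $ts_i=s_i$ for every $i$, and hence $t=1$. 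Thus $T(\widetilde{\chi})$ is trivial and $\widetilde{\chi}\downarrow_{SL_n^\epsilon(q)}$ is irreducible.

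For the second assertion, the first part shows that the restriction map $\widetilde{\chi}\mapsto\widetilde{\chi}\downarrow_{SL_n^\epsilon(q)}$ sends $\Irr_{2'}(GL_n^\epsilon(q))$ into $\Irr_{2'}(SL_n^\epsilon(q))$, with fibers precisely the $\Irr(GL/SL)$-tensor orbits, each of size $q-\epsilon$ (again by $T(\widetilde{\chi})=\{1\}$). Its image is exactly the set of $GL$-invariant (equivalently, $GL$-extendible) members of $\Irr_{2'}(SL_n^\epsilon(q))$, and consequently has cardinality $|\Irr_{2'}(GL_n^\epsilon(q))|/(q-\epsilon)$. The extendibility claim is therefore equivalent to the counting identity
\[|\Irr_{2'}(SL_n^\epsilon(q))|=\frac{|\Irr_{2'}(GL_n^\epsilon(q))|}{q-\epsilon}\]
under the hypothesis that $n$ is not a $2$-power. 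The right-hand side is read off directly from Lemma~\ref{lem:GKNT}; the left-hand side can be obtained from the companion $SL$-parametrization in \cite[\S5]{GKNT17}, or from a McKay-style analysis of the Sylow $2$-normalizer structure of $SL_n^\epsilon(q)$ inside $GL_n^\epsilon(q)$ (the McKay conjecture being known for these groups).

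The principal obstacle is this counting step and pinpointing where the hypothesis ``$n$ not a $2$-power'' enters. When $n=2^a$, the $2$-part of $|Z(SL_n^\epsilon(q))|=\gcd(n,q-\epsilon)$ is as large as possible, which allows for odd-degree characters $\chi\in\Irr(SL_n^\epsilon(q))$ with non-trivial $GL$-orbit of even size $t>1$; for such $\chi$ every constituent of $\chi\uparrow^{GL}$ has even degree $t\chi(1)$ (by part one, an odd-degree $\widetilde{\chi}$ would restrict irreducibly to $SL$, forcing $t=1$), and the extendibility really does fail. The non-$2$-power hypothesis should force the semisimple labels $s_i\in C_{q-\epsilon}$ in the parametrization to interact with the center $Z(SL_n^\epsilon(q))$ in a way that precludes such even orbits; verifying this interaction rigorously is the heart of the argument and will occupy most of the proof.
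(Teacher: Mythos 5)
Your Clifford-theoretic derivation of the first statement is essentially correct and is a legitimate self-contained argument: since $GL_n^\epsilon(q)/SL_n^\epsilon(q)$ is cyclic, the number of constituents of $\widetilde{\chi}\downarrow_{SL_n^\epsilon(q)}$ is the order of the tensor stabilizer, the identity $S(s,\mu)\cdot(\widehat{t}\diamond\det)=S(ts,\mu)$ together with compatibility of linear-character twists with Lusztig induction gives the displayed formula, and the uniqueness in Lemma \ref{lem:GKNT} (with the $s_i$ pairwise distinct and the ordered proper decomposition) forces the stabilizer to be trivial. Note, though, that the paper itself disposes of \emph{both} statements simply by citing \cite[Lemma 10.2]{ST18}, so the real question is the second assertion.

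That second assertion is where the hypothesis that $n$ is not a $2$-power carries all the weight, and your proposal does not prove it. You correctly reduce extendibility to the counting identity $|\Irr_{2'}(SL_n^\epsilon(q))|=|\Irr_{2'}(GL_n^\epsilon(q))|/(q-\epsilon)$, but you then only assert that the left-hand side ``can be obtained'' from an $SL$-side parametrization or a McKay-style analysis, and your final paragraph concedes that verifying the relevant interaction with the center ``will occupy most of the proof.'' That unverified step is precisely the content of the lemma: one must show that when $n$ is not a $2$-power, no $\chi\in\Irr_{2'}(SL_n^\epsilon(q))$ has a nontrivial orbit under the diagonal automorphisms coming from $GL_n^\epsilon(q)$, i.e.\ the inertia group of $\chi$ in $GL_n^\epsilon(q)$ is the whole group. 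This needs an analysis of the action of $GL_n^\epsilon(q)/SL_n^\epsilon(q)\bZ(GL_n^\epsilon(q))$, of order $\gcd(n,q-\epsilon)$, on the constituents of the restrictions of the characters in Lemma \ref{lem:GKNT}; since $\gcd(n,q-\epsilon)$ can be even even when $n$ is not a $2$-power, the dichotomy is not a parity triviality, and the heuristic in your last paragraph (tying failure of extendibility to $n=2^a$ via the size of the center) is not an argument. Carrying this out is exactly what \cite[Lemma 10.2]{ST18}, building on \cite[\S 5]{GKNT17}, does, and what the paper cites; as written, your proposal leaves that step as a genuine gap.
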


\begin{proof} This follows from \cite[Lemma 10.2]{ST18}.
\end{proof}

\subsection{Proof of Theorem \ref{thm:continuity2} for $PSL^\epsilon_n(q)$}

\begin{proposition}\label{prop:SL}
Conjecture \ref{conj:continuity2-repeated} holds true for $p=2$ and
$S=PSL^\epsilon_n(q)$ with $q$ odd and $n$ is not a $2$-power.
\end{proposition}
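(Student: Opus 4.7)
The plan is to reduce the problem, via the Dipper--James-style parametrization (Lemma \ref{lem:GKNT}) of odd-degree characters of $\widetilde{G} := GL_n^\epsilon(q)$ together with the level formulas in Propositions \ref{prop:technical} and \ref{prop:technical2}, to an arithmetic statement about the $2$-parts of the semisimple labels $s_i \in C_{q-\epsilon}$, and then to construct level-$\beta$ characters for each $2 \leq \beta \leq \alpha$ by modifying these $2$-parts inside the cyclic Sylow $2$-subgroup of $C_{q-\epsilon}$. Set $G := SL_n^\epsilon(q)$; I first inflate the given $X$-invariant character $\chi$ of $S$ with $\lev(\chi) = \alpha \geq 2$ to $G$, and apply Lemma \ref{lem:abc} (usable because $n$ is not a $2$-power) to extend $\chi$ to some $\widetilde{\chi} \in \Irr_{2'}(\widetilde{G})$. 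By Lemma \ref{lem:GKNT},
\[
\widetilde{\chi} = S(s_1, \lambda_1) \circ \cdots \circ S(s_k, \lambda_k)
\]
for pairwise distinct $s_i \in C_{q-\epsilon}$ and a proper decomposition $n = n_1 + \cdots + n_k$; so $\widetilde{\chi} \in \EC(\widetilde{G}, s)$ with $s := \diag(s_1 I_{n_1}, \ldots, s_k I_{n_k})$, and Propositions \ref{prop:technical} and \ref{prop:technical2}(ii) give
\[
\alpha \,=\, \lev(\chi) \,=\, \min_{t \in C_{q-\epsilon}} \max_{1 \leq i \leq k} \log_2(\ord(t s_i)_2).
\]

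Next I translate $X$-invariance. After a central shift (which does not change $\chi$) I may assume $\max_i \log_2(\ord(s_i)_2) = \alpha$. Because $\widetilde{\chi}$ extends $\chi$, conjugation by $\widetilde{G}$ is trivial on $\chi$, so $X$-invariance of $\chi$ reduces to invariance under the field and graph components of $X$, both of which are $2$-groups (as $X/S$ is). Each such automorphism acts on $C_{q-\epsilon}$ as a multiplicative power map (field: $x \mapsto x^{q_0^r}$; graph: $x \mapsto x^{-1}$), and the $X$-invariance of $\chi$ then amounts to these power maps permuting the tuple $(s_i)_i$ up to a common central twist, with the partitions $\lambda_i$ permuted accordingly.

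Given a target level $\beta$ with $2 \leq \beta \leq \alpha$, set $a_i := \log_2(\ord(s_i)_2)$, decompose $s_i = s_i^{(2)} s_i^{(2')}$, and define $s_i' := (s_i^{(2)})^{2^{\max(0,\, a_i - \beta)}} \cdot s_i^{(2')}$. The new label has $\log_2(\ord(s_i')_2) = \min(a_i, \beta)$, so $\max_i \log_2(\ord(s_i')_2) = \beta$, and a direct computation in the cyclic group $C_{q-\epsilon}$ shows
\[
\min_{t \in C_{q-\epsilon}} \max_i \log_2(\ord(t s_i')_2) \,=\, \beta
\]
(using that $\alpha \geq 2$ forces at least two labels whose $2$-parts cannot be simultaneously cancelled by a single central shift). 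Since raising to a fixed $2$-power on $C_{q-\epsilon}$ commutes with every multiplicative power map, the assignment $s_i \mapsto s_i'$ preserves the $X$-orbit structure, so $\widetilde{\chi}' := S(s_1', \lambda_1) \circ \cdots \circ S(s_k', \lambda_k)$ restricts to an $X$-invariant character $\chi' := \widetilde{\chi}'\hspace{-3pt}\downarrow_G \in \Irr_{2'}(G)$ of level $\beta$ by Propositions \ref{prop:technical} and \ref{prop:technical2}(ii). A final choice of the overall central factor, guided by Lemma \ref{lem:restriction-center}(ii), ensures $\chi'$ is trivial on $\bZ(G)$ and hence descends to $S$.

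The main obstacle will be distinctness of the new labels $s_i'$, which is required in order to apply Lemma \ref{lem:GKNT}: collisions can occur when two $s_i$'s share the same $2'$-part and their $2$-parts get collapsed under the power map $x \mapsto x^{2^{a_i - \beta}}$. My plan for this is to slightly perturb the $2'$-part of one of the colliding labels within its $X$-orbit -- possible because the $2'$-Sylow of $C_{q-\epsilon}$ is typically much larger than the number of collisions -- or to vary the chosen power map across different $X$-orbits. The hypothesis that $n$ is not a $2$-power enters here through the flexibility of the proper decompositions in Lemma \ref{lem:GKNT}, and a case analysis on the $X$-orbit structure and on small values of $q - \epsilon$ seems unavoidable.
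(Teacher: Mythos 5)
Your overall strategy (extend to $\widetilde{G}=GL_n^\epsilon(q)$, read off the level from the semisimple labels via Propositions \ref{prop:technical} and \ref{prop:technical2}, then modify the $2$-parts of the labels) is in the same spirit as the paper, but the specific construction has a gap that is more serious than the collision problem you flag at the end: the claimed ``direct computation'' that $\min_{t}\max_i\log_2(\ord(ts_i')_2)=\beta$ is false in general. Concretely, suppose $(q-\epsilon)_2=8$, $\alpha=3$, $\beta=2$, and let $z$ generate the Sylow $2$-subgroup of $C_{q-\epsilon}$ and $u\neq 1$ be of odd order. Take $k=2$ with $s_1=zu$ and $s_2=z^2$. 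Then $s_1s_2^{-1}$ has $2$-part of order $8$, so for every central twist $t$ at least one of $ts_1,ts_2$ has $2$-part of order $8$; by Proposition \ref{prop:technical2}(ii) the original character has level $3=\alpha$, with $\max_i\ord(s_i)_2=2^\alpha$ as in your normalization. Your truncation gives $s_1'=z^2u$, $s_2'=z^2$ (distinct, so no collision), but the twist $t=z^{-2}$ makes both $ts_1'$ and $ts_2'$ of odd order, so Proposition \ref{prop:technical2}(ii) forces the restriction of $\widetilde{\chi}'$ to $SL_n^\epsilon(q)$ to have level $0$, not $\beta$. The point is that truncating each label independently can make labels congruent to much greater $2$-depth than before, and the level of the $SL$-restriction is governed by these congruences modulo central twists, not by $\max_i\ord(s_i')_2$. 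There are two further unaddressed points: $X$-invariance of $\chi'$ does not follow from ``truncation commutes with power maps,'' because the automorphism action on the labels of an $SL$-character is only defined up to a central factor of $\widetilde{G}$ (and truncation does not commute with multiplication by such a factor), and because you also need equivariance of the unipotent labels $\lambda_i$, which is not available from the cited results; and triviality on $\bZ(G)$ cannot be arranged by ``a final choice of the overall central factor,'' since multiplying the $GL$-label by a central element does not change its image in $G^\ast=PGL_n^\epsilon(q)$, which is what Lemma \ref{lem:restriction-center}(ii) requires to lie in $[G^\ast,G^\ast]$.

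The paper's proof avoids all three issues by a different modification: instead of truncating blockwise and keeping the $\lambda_i$, it passes from $\widetilde{\chi}$ to the \emph{semisimple} character $\widetilde{\chi}_1=S(s_1^2,(n_1))\circ\cdots\circ S(s_k^2,(n_k))$, i.e.\ it squares the whole semisimple element $s$ and replaces every unipotent part by the trivial one, and it descends only one level at a time (iterating to reach all levels down to $2$). Squaring is manifestly compatible with duals of automorphisms and with conjugacy, it keeps the image of the label inside $[G^\ast,G^\ast]$ (so the restriction is trivial on $\bZ(G)$ by Lemma \ref{lem:restriction-center}), and the restriction to $SL_n^\epsilon(q)$ is the unique semisimple character in its Lusztig series, which gives $X$-invariance for free; the exact level drop $\alpha\mapsto\alpha-1$ is then proved by comparing $\max_i\ord(s_i^2)_2$ with $\max_i\ord(\tau s_i^2)_2$ for all central $\tau$ (using a parity argument for non-square $\tau$), with no distinctness or case analysis needed. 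If you want to salvage your approach, you would essentially have to rebuild these three ingredients, at which point you are led back to the paper's construction.
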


\begin{proof} Suppose that $\chi\in\Irr_{2'}(S)$ with $\lev(\chi)=\alpha\geq
2$ and $\chi$ being $X$-invariant. There is nothing to prove if
$\alpha=2$, so let us assume that $\alpha\geq 3$. Recall that $X$ is
an almost simple group with socle $S$ such that $X/S$ is a
$2$-group. Viewing $\chi$ as a character of $G:=SL_n^\epsilon(q)$
and using Lemma \ref{lem:abc}, we have that $\chi$ extends to some
$\widetilde{\chi}\in\Irr_{2'}(\widetilde{G})$, where
$\widetilde{G}:=GL_n^\epsilon(q)$. By Proposition
\ref{prop:technical2}, we may and will choose $\widetilde{\chi}$ so
that
\[\lev(\widetilde{\chi})=\lev(\chi)=\alpha.\]
By Lemma \ref{lem:GKNT}, we have \[
\widetilde{\chi}=S(s_1,\lambda_1)\circ
S(s_2,\lambda_2)\circ\cdots\circ S(s_k,\lambda_k),\] where
$n=n_1+n_2+\cdots+n_k$ is a proper decomposition of $n$ and, for
$1\leq i\leq k$, $s_i\in C_{q-\epsilon}$  are pairwise different and
$\lambda_i$ is a partition of $n_i$.

We now consider
\[
\widetilde{\chi}_1:=S(s_1^2,(n_1))\circ
S(s_2^2,(n_2))\circ\cdots\circ S(s^2_k,(n_k))
\]
and
\[\chi_1:=\widetilde{\chi}_1\hspace{-5pt}\downarrow_G.\] We claim that $\chi_1$ is
odd-degree, irreducible, and trivial on $\bZ(G)$, and thus can be
viewed as a member of $\Irr_{2'}(S)$; furthermore, $\chi_1$ has
$2$-rationality level $\alpha-1$ and is $P$-invariant.

\bigskip

(a) We note that the Jordan decomposition of $\widetilde{\chi}$ is
simply the pair $(s,\psi)$ where
\[s=\diag(\underbrace{s_1,...s_1}_{n_1 \text{ times}},\underbrace{s_2,...,s_2}_{n_2 \text{ times}},...,
\underbrace{s_k,...,s_k}_{n_k \text{ times}})\] and
\[\psi=\varphi^{\lambda_1}\otimes
\varphi^{\lambda_2}\otimes\cdots\otimes \varphi^{\lambda_k},\] a
unipotent character of the centralizer
\[
\bC_{GL_n^\epsilon(q)}(s)=GL_{n_1}^\epsilon(q)\times
GL_{n_2}^\epsilon(q)\times \cdots\times GL_{n_k}^\epsilon(q).
\]
Since $\widetilde{\chi}(1)$ is odd, the semisimple element $s$ is
$2$-central. It follows that $s^2$ is $2$-central, and so
$\widetilde{\chi}_1(1)$ is odd, implying that $\chi_1$ is
irreducible (by Lemma \ref{lem:abc}) and odd-degree.

Next, we note that $\chi$, being the restriction of
$\widetilde{\chi}$ to $G$, must belong to the Lusztig series of $G$
associated to $s^\ast\in G^\ast:=PGL_n^\epsilon(q)$, which is the
image of $s$ under the natural projection from $\widetilde{G}$ onto
$G^\ast$. Since $\chi$ is trivial on $\bZ(G)$, Lemma
\ref{lem:restriction-center} implies that $s^\ast\in
[G^\ast,G^\ast]=S$. Therefore $(s^2)^\ast$, the image of $s^2$ under
the same projection, also belongs to $[G^\ast,G^\ast]$. It follows
that, $\chi_1$, a member of the series $\EC(G,(s^2)^\ast)$, is
trivial on $\bZ(G)$, again by Lemma \ref{lem:restriction-center}.

\bigskip

(b) Next we show that $\lev(\chi_1)=\alpha-1$. We identify each
$t=\tau I_n\in\bZ(\widetilde{G})$ with $\tau\in C_{q-\epsilon}$.
Then \[\ord(ts)=\lcm(\ord(\tau s_1),\ord(\tau s_2),...,\ord(\tau
s_k)),\] and so
\[\ord(ts)_2=\max_{1\leq i\leq k}\{\ord(\tau s_i)_2\}.\]
Therefore, by Proposition \ref{prop:technical2},
\[
\alpha=\lev(\chi)=\log_2\min_{\tau\in C_{q-\epsilon}}\{\max_{1\leq
i\leq k}\{\ord(\tau s_i)_2\}\}.
\]
Note that $\alpha=\lev(\widetilde{\chi})=\log_2(\max_{1\leq i\leq
k}\{\ord(s_i)_2\})$. We deduce that
\[
\max_{1\leq i\leq k}\{\ord(s_i)_2\}\leq \max_{1\leq i\leq
k}\{\ord(\tau s_i)_2\}
\]
for every $\tau\in C_{q-\epsilon}$. This implies that
\[
\max_{1\leq i\leq k}\{\ord(s_i^2)_2\}\leq \max_{1\leq i\leq
k}\{\ord(\tau s_i^2)_2\}
\]
for every $\tau\in C_{q-\epsilon}$. (The inequality is certainly
true if $\tau$ is a square in $C_{q-\epsilon}$. Otherwise, we have
$\ord(\tau)_2=(q-\epsilon)_2$, and so $\ord(\tau
s_i^2)_2=(q-\epsilon)_2$ and the inequality is still satisfied.)
Using Proposition \ref{prop:technical2} again, we now have
\[
\lev(\widetilde{\chi}_1)\leq \lev(\chi_1).
\]
On the other hand, we have $\lev(\widetilde{\chi}_1)\geq
\lev(\chi_1)$ by Lemma \ref{lem:1}(iii) and
\[\lev(\widetilde{\chi}_1)=\log_2(\ord(s^2)_2)=\alpha-1\] by
Proposition \ref{prop:technical} and a note that $\alpha\geq 3$. We
conclude that $\lev(\chi_1)=\alpha-1$.

\bigskip

(c) Finally we argue that $\chi_1$ is $\varphi$-invariant for every
automorphism $\varphi\in X$. We will identify $\varphi$ as an
automorphism of $G$ via the natural identification $\Aut(S)\cong
\Aut(G)$ (see \cite[Corollary 5.1.4]{Gorensteinetal}). Recall that
$\chi$ is $\varphi$-invariant and, when viewed as a character of
$G$, belongs to $\EC(G,s^\ast)$, where $s^\ast$ is the image of $s$
under the projection $\widetilde{G}\rightarrow G^\ast$. We have that
$\EC(G,s^\ast)$ is $\varphi$-invariant, and therefore the
$G^\ast$-conjugacy class of $s^\ast$ is $\varphi^\ast$-invariant,
where $\varphi^\ast$ is the dual automorphism of $\varphi$ defined
just before (\ref{eq:2}). It follows that the $G^\ast$-conjugacy
class of $(s^2)^\ast$ is also $\varphi^\ast$-invariant, which in
turn implies that the series $\EC(G,(s^2)^\ast)$ is
$\varphi$-invariant.

Recall that $\iota:SL_n(\overline{\FF_r})\hookrightarrow
GL_n(\overline{\FF_r})$, where $r$ is the (only) prime divisor of
$q$, is a regular embedding, in the sense of \cite[\S 1.7]{GM20}.
Also, $\chi_1$ is the unique irreducible constituent of the
restriction of the semisimple character
$\widetilde{\chi}_1\in\Irr(\widetilde{G})$ to $G$. It follows from
\cite[Corollary 2.6.18]{GM20} that $\chi_1$ is the unique semisimple
character in $\EC(G,(s^2)^\ast)$, and hence must be
$\varphi$-invariant.

We have produced the character $\chi_1\in\Irr_{2'}(S)$ of level
$\alpha-1$ that satisfies all the required conditions, provided that
$\alpha\geq 3$. Repeating the process, one can show that $S$
possesses similar characters of every level from $2$ to $\alpha$.
\end{proof}


\section{Groups of type $E_6$ and ${}^2E_6$}\label{sec:E6}

In this section we prove Theorem \ref{thm:continuity2} for
$S=E_6^\epsilon(q)$ with $q$ odd. We first establish a version of
Proposition \ref{prop:technical} for the groups in question. The
version for all $p$ generally is yet to be determined, but,
fortunately, still holds true when $p=2$.

\begin{proposition}\label{prop:technicalE6}
Let $\alpha\in\ZZ^{\geq 2}$. Let $G=E_6^\epsilon(q)_{sc}$ with $q$
odd and ${\chi}\in\Irr_{2'}(G)$. Suppose that ${\chi}\in\EC({G},s)$
for some semisimple element $s\in G^\ast:=E_6^\epsilon(q)_{ad}$.
Then $\lev({\chi})=\alpha$ if and only if $\ord(s_2)= 2^\alpha$.
\end{proposition}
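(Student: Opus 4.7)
The approach will mirror that of Proposition \ref{prop:technical}: I realize $G = \bG^F$ with $\bG$ the simply connected algebraic group of type $E_6$ over $\overline{\FF_r}$ (where $r$ is the prime dividing $q$) equipped with a Frobenius endomorphism $F$ yielding $\bG^F = G$, and take $\bG^\ast$ to be the adjoint algebraic group of type $E_6$, so that $G^\ast = \bG^{\ast F^\ast} = E_6^\epsilon(q)_{\mathrm{ad}}$. The key structural advantage here is that $\bG^\ast$ has trivial (hence connected) center. The plan is to apply Theorem \ref{thm:technical} to $\bL^\ast := \bC_{\bG^\ast}(s)$; two hypotheses must be verified, namely that $\bL^\ast$ is a Levi subgroup of $\bG^\ast$ and that all $2'$-degree unipotent characters of $\bL^F$ have $2$-rationality level at most $\alpha-1$.

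For the first hypothesis, Steinberg's theorem yields that $\bL^\ast$ is connected (as $\bG^\ast$ has connected center), and by Proposition \ref{prop:technical-0} $\bL^\ast$ contains a Sylow $2$-subgroup of $\bG^\ast$. The subtlety is that $p=2$ is a bad prime for $E_6$, so some involution centralizers in adjoint $E_6$ are non-Levi pseudo-Levi subgroups, most notably of type $A_5 + A_1$ arising from the extended Dynkin diagram. I will verify, through a case analysis of $2$-element centralizers in adjoint $E_6$ via the extended Dynkin diagram and Kac coordinates, that for the $2$-central $s_2$ appearing in our setting the centralizer $\bC_{\bG^\ast}(s_2)$ is in fact a Levi subgroup. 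Thanks to Proposition \ref{prop:linhtinh}(ii), one may restrict to $q \equiv \epsilon \pmod 4$, where the Sylow $2$-structure of $E_6^\epsilon(q)$ is most tractable.

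For the second hypothesis, the Levi subgroups of $\bG$ dual to such $\bL^\ast$ have factors of types $A_k$ (with $k \leq 5$) and $D_k$ (with $k \leq 5$), together with a central torus. Unipotent characters of finite groups of type $A$ are rational-valued by \cite[Corollary 1.12]{Lusztig02}, while for type $D$ with $q$ odd the unipotent characters are at most almost $2$-rational via Lusztig's classification. In all such cases their $2$-rationality level is at most $1$, which is within the bound $\alpha - 1$ since $\alpha \geq 2$. Theorem \ref{thm:technical} then directly yields the desired equivalence $\lev(\chi) = \alpha \iff \ord(s_2) = 2^\alpha$.

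The main obstacle will be the Levi verification in the first hypothesis. If a non-Levi pseudo-Levi centralizer does appear for some $2$-central $s_2$, the backup strategy is to pass to a regular embedding $\iota : \bG \hookrightarrow \tilde{\bG}$ with $\tilde{\bG}$ of connected center, and to derive the level of $\chi$ from that of a character $\tilde{\chi} \in \Irr(\tilde{G})$ lying above $\chi$, adapting the framework of Proposition \ref{prop:technical2}. A key simplification specific to $E_6^\epsilon(q)_{sc}$ is that $Z(G)$ has order $\gcd(3, q-\epsilon)$, which is coprime to $2$, so the comparison of $2$-rationality levels along the embedding $G \subseteq \tilde{G}$ is cleaner than in the $SL_n^\epsilon$ case; one then appeals to Theorem \ref{thm:extra}, which requires only the connected-center property on $\tilde{\bG}$ and not a Levi condition on the centralizer.
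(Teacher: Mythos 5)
Your overall route is the same as the paper's: reduce to Theorem \ref{thm:technical} by showing the relevant centralizer is a Levi of type $D_5T_1$ whose dual Levi has ($2$-)rational $2'$-degree unipotent characters. However, your key verification is aimed at the wrong object. Theorem \ref{thm:technical} requires that $\bC_{\bG^\ast}(s)$ for the \emph{full} semisimple label $s$ (which in general has a nontrivial odd part) be a Levi subgroup; you only propose to check, via Kac coordinates, that $\bC_{\bG^\ast}(s_2)$ is a Levi. A priori $\bC_{\bG^\ast}(s)$ could be a proper reductive, possibly non-Levi, subgroup of the $D_5T_1$ centralizer of $s_2$ that still contains a Sylow $2$-subgroup, and then your argument gives nothing. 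What is actually needed (and what the paper imports from the proof of \cite[Theorem 5.1]{Navarro-Tiep21}) is the stronger fact that \emph{every} nontrivial $2$-central semisimple element of $E_6^\epsilon(q)_{ad}$ has centralizer equal to one fixed $F^\ast$-stable Levi of type $D_5T_1$ (there are exactly $q-\epsilon-1$ such classes); a case analysis of $2$-elements alone does not yield this, though it could be completed by observing that $s$ centralizes a Sylow $2$-subgroup and hence lies in the center of that Levi, with Borel--de Siebenthal pinning down $\bC_{\bG^\ast}(s)$. Separately, your appeal to Steinberg is misstated: connectedness of semisimple centralizers follows from simple connectedness of the derived group, not from connectedness of the center, and adjoint $E_6$ is not simply connected; connectedness here must be read off case by case (as the paper does via \cite[Table 1]{Malle19}) or deduced from the Levi identification.

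Two further gaps. First, restricting to $q\equiv\epsilon\pmod 4$ ``thanks to Proposition \ref{prop:linhtinh}(ii)'' is not legitimate: that proposition establishes the continuity statement for the simple group when $q\equiv-\epsilon\pmod 4$, not the present equivalence between $\lev(\chi)$ and $\ord(s_2)$, which is asserted for all odd $q$; to keep the proposition as stated you would have to treat the other congruence separately (e.g.\ by showing that then all odd-degree characters have level at most $1$ and all $2$-central semisimple elements have $2$-part of order at most $2$, so both sides of the equivalence fail for $\alpha\ge 2$). Second, the fallback through a regular embedding and Theorem \ref{thm:extra} is not ``cleaner'' because $|\bZ(E_6^\epsilon(q)_{sc})|$ is odd: the complication in the $SL_n^\epsilon$ case does not come from $\bZ(G)$ but from the even-order central twists $ts$, $t$ in the kernel of $\widetilde{G}^\ast\to G^\ast$, which here is a torus of order $q-\epsilon$ (even); the minimum-over-twists phenomenon of Proposition \ref{prop:technical2}(ii) therefore reappears, and you would additionally need to prove that this minimum equals $\log_2\ord(s_2)$, which is genuine extra work you have not sketched. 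The direct application of Theorem \ref{thm:technical} to the dual pair $(E_6^\epsilon(q)_{sc},E_6^\epsilon(q)_{ad})$, once the centralizer statement above is in hand, avoids all of this; your point about type $D$ unipotent characters is fine (they are in fact rational-valued by \cite[Corollary 1.12]{Lusztig02}, which is more than the level-at-most-one bound you invoke).
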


\begin{proof} We reuse some notation in Section \ref{sec:simple-groups}. In particular, we view $G$ as
the group of fixed points of a simple simply connected algebraic
group $\bG$ (of type $E_6$ and in odd characteristic) under a
Frobenius endomorphism $F$ on $\bG$. Let $(\bG^\ast,F^\ast)$ be a
dual pair of $(\bG,F)$, so that $G^\ast={\bG^\ast}^{F^\ast}$.

We first note that odd-degree unipotent characters of $G$ lie in the
principal series and are $2$-rational, as mentioned in the proof of
\cite[Theorem 3.4]{Malle19}. Therefore we may assume that $s$ is
nontrivial. As seen in Proposition \ref{prop:technical-0}, the
semisimple element $s$ is $2$-central in $G^\ast$. It is argued in
the proof of \cite[Theorem 5.1]{Navarro-Tiep21} that
$\bC_{\bG^\ast}(s)$ is the same for all such elements $s$ and equal
to an $F^\ast$-stable Levi subgroup of $\bG^\ast$, say $\bL^\ast$,
of type $D_5T_1$. (There are precisely $q-\epsilon-1$ conjugacy
classes of such elements $s$ in $G^\ast$.) The centralizer
$\bC_{G^\ast}(s)$ is therefore isomorphic to
$D^\epsilon_5(q).C_{q-\epsilon}$, whose unipotent characters are of
the form $\psi\circ f$, where $\psi$ is a unipotent character of
$D^\epsilon_5(q)$ and $f$ is the natural morphism from $\bL^\ast$ to
its $D_5$-factor, by \cite[Proposition 13.20]{Digne-Michel91}. By
\cite[Corollary 1.12]{Lusztig02}, unipotent characters of
$D^\epsilon_5(q)$ are rational-valued (there are indeed exactly
eight of them of odd-degree), and so the same thing holds for
$\bC_{G^\ast}(s)$. Now Theorem \ref{thm:technical} is applied to
yield the result.
\end{proof}

\begin{proposition}\label{prop:E6}
Conjecture \ref{conj:continuity2-repeated} holds true for $p=2$ and
$S=E_6^\epsilon(q)$ with $q$ odd.
\end{proposition}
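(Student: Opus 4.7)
The plan is to mimic the strategy of Proposition \ref{prop:SL}, with Proposition \ref{prop:technicalE6} playing the role of Proposition \ref{prop:technical} and exploiting the uniform centralizer description for the relevant semisimple elements. By Proposition \ref{prop:linhtinh}(ii) we may assume $q\equiv\epsilon\pmod{4}$, so that $(q-\epsilon)_2\geq 4$. Take $\chi\in\Irr_{2'}(S)$ that is $X$-invariant with $\lev(\chi)=\alpha\geq 3$ (the case $\alpha=2$ is vacuous), and inflate it to $G:=E_6^\epsilon(q)_{sc}$ along the quotient $G\to S$; since $|\bZ(G)|=\gcd(3,q-\epsilon)$ is odd, the inflated character is still of odd degree and still trivial on $\bZ(G)$. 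By Proposition \ref{prop:technicalE6} we have $\chi\in\EC(G,s)$ with $\ord(s_2)=2^\alpha$, and as explained in its proof, $s$ may be placed in the central torus $T_1^{F^\ast}\cong C_{q-\epsilon}$ of a Levi $\bL^\ast$ of type $D_5T_1$, with $\bC_{\bG^\ast}(s)=\bL^\ast$; crucially, this centralizer description is the same for every nontrivial element of $T_1^{F^\ast}$.

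For each $\beta$ with $2\leq\beta\leq\alpha-1$, set $s_\beta:=s_2^{2^{\alpha-\beta}}s_{2'}\in T_1^{F^\ast}$. Each $s_\beta$ is nontrivial, so $\bC_{\bG^\ast}(s_\beta)=\bL^\ast$. Let $\psi$ denote the odd-degree unipotent character of $\bC_{G^\ast}(s)$ labelling $\chi$ under Jordan decomposition, and define $\chi_\beta\in\EC(G,s_\beta)$ to be the character labelled by the same $\psi$ (possible since the centralizers coincide). Then $\chi_\beta(1)=\chi(1)$ is odd because the relevant centralizer indices agree, and $\lev(\chi_\beta)=\beta$ by Proposition \ref{prop:technicalE6}. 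Triviality of $\chi_\beta$ on $\bZ(G)$ follows from Lemma \ref{lem:restriction-center}(ii), whose hypothesis $|\bZ(G)|=|G^\ast/(G^\ast)'|=\gcd(3,q-\epsilon)$ is satisfied for $E_6^\epsilon$: indeed, $\chi$ trivial on $\bZ(G)$ yields $s\in(G^\ast)'$, so $s_2, s_{2'}\in\langle s\rangle\leq(G^\ast)'$, hence $s_\beta\in(G^\ast)'$, and the lemma returns the desired triviality.

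It remains to establish the $X$-invariance of each $\chi_\beta$. Given $\varphi\in X$ with a dual $\varphi^\ast\in\Aut(G^\ast)$, formula (\ref{eq:2}) and the $\varphi$-invariance of $\chi$ imply that $\varphi^\ast(s)$ is $G^\ast$-conjugate to $s$. Since $\varphi^\ast$ commutes with the decomposition into $2$- and $2'$-parts, $\varphi^\ast(s_\beta)$ is $G^\ast$-conjugate to $s_\beta$, so $\varphi$ preserves $\EC(G,s_\beta)$. To upgrade this to $\varphi$-invariance of $\chi_\beta$ itself, I would invoke the equivariance of the Jordan decomposition under automorphisms (as in \cite[Proposition 7.2]{Taylor18}): the action of $\varphi$ on $\EC(G,s_\beta)$ is induced by the action of $\varphi^\ast$ on the common centralizer $\bC_{G^\ast}(s_\beta)=\bC_{G^\ast}(s)$, so under the identification of unipotent labels it matches the action of $\varphi$ on $\EC(G,s)$. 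Since $\chi$ is $\varphi$-fixed and labelled by $\psi$, so is $\chi_\beta$.

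The main obstacle is this last $X$-invariance step. In Proposition \ref{prop:SL}, the analogue of $\chi_\beta$ was a \emph{semisimple} character (uniquely determined in its Lusztig series), so $\varphi$-stability of the series itself was enough. Here the Levi $\bL^\ast$ of type $D_5T_1$ carries eight odd-degree unipotent characters, so $\EC(G,s_\beta)$ contains several odd-degree characters and no such uniqueness is available; one must genuinely track the action of $\varphi^\ast$ on the unipotent label $\psi$ of the common centralizer via the equivariance of Jordan decomposition, and check that this action agrees on the series $\EC(G,s)$ and $\EC(G,s_\beta)$.
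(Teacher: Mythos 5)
Your reduction steps are sound and essentially parallel the paper's: power up the semisimple element $s$ (your $s_\beta:=s_2^{2^{\alpha-\beta}}s_{2'}$ versus the paper's $s^{2^i}$ is an immaterial difference), compute the levels via Proposition~\ref{prop:technicalE6}, get oddness of degree from $2$-centrality, triviality on $\bZ(G)$ from $s_\beta\in[G^\ast,G^\ast]$ and Lemma~\ref{lem:restriction-center}, and stability of the series $\EC(G,s_\beta)$ from (\ref{eq:2}). The genuine gap is exactly the step you flag yourself: passing from $\varphi$-stability of $\EC(G,s_\beta)$ to $\varphi$-invariance of the specific character $\chi_\beta$ ``with the same unipotent label $\psi$ as $\chi$.'' The results you cite ([Taylor18, Proposition 7.2], equivalently \cite[Corollary 2.4]{Navarro-Tiep-Turull}) only describe how automorphisms permute Lusztig series; they say nothing about how $\varphi$ acts on the characters inside a series in terms of Jordan labels. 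An automorphism-equivariant Jordan decomposition is a delicate matter in general, and here $\bG$ is simply connected of type $E_6$, whose center is \emph{not} connected, so the canonical Jordan decomposition of \cite[Theorem 4.7.1]{GM20} used in the paper is not even available in the form you invoke; on top of that you would need the identifications of the two different series $\EC(G,s)$ and $\EC(G,s_\beta)$ with unipotent characters of the common centralizer to intertwine the $\varphi$-actions compatibly, which is precisely the unproved assertion. So as written the argument does not close.

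The fix is the same device you used (correctly) in the $PSL^\epsilon_n(q)$ case, and it is what the paper does: do not try to keep the label $\psi$. Since $\bC_{\bG^\ast}(s_\beta)$ is connected (it is the Levi $\bL^\ast$ of type $D_5T_1$, resp.\ connected by \cite[Table 1]{Malle19}), the series $\EC(G,s_\beta)$ contains a \emph{unique} semisimple character $\chi_{(s_\beta)}$ (\cite[Definition 14.39, Corollary 14.47]{Digne-Michel91}). This character is odd-degree because $s_\beta$ is $2$-central, trivial on $\bZ(G)$ because $s_\beta\in[G^\ast,G^\ast]$, of level $\beta$ by Proposition~\ref{prop:technicalE6}, and $X$-invariant for free: $\varphi$ stabilizes the series by (\ref{eq:2}) and must fix its unique semisimple member. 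With this substitution your proof becomes the paper's proof; the Jordan-label bookkeeping you worried about is never needed.
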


\begin{proof}
We keep the notation in Proposition \ref{prop:technicalE6}. Suppose
that $\chi\in\Irr_{2'}(S)$ with $\lev(\chi)=\alpha$ and $\chi$ is
$X$-invariant. Here $X/S$ is a $2$-group of outer automorphisms of
$S$. Viewing $\chi$ as a character of $G$, we have $\chi\in\EC(G,s)$
for some $2$-central semisimple element $s\in G^\ast$ with
$\ord(s_2)=2^\alpha$. Also, by Lemma \ref{lem:restriction-center},
$s\in [G^\ast,G^\ast]=S$. Furthermore, for every $\varphi\in X$, $s$
and $\varphi^\ast(s)$ are conjugate in $G^\ast$, by formula
(\ref{eq:2}).

For each $0\leq i\leq \alpha-2$, consider the ($2$-central
semisimple) elements
\[s_i:=s^{2^i}\in G^\ast.\] By \cite[Table 1]{Malle19}, the
centralizer $\bC_{G^\ast}(s_i)$ is connected, and thus the Lusztig
series $\EC(G,s_i)$ contains a unique semisimple character, which we
denote by $\chi_{(s_i)}$ (see \cite[Definition 14.39 and Corollary
14.47]{Digne-Michel91}). Now $\chi_{(s_i)}$ is odd-degree (since
$s_i$ is $2$-central), restricts trivially on $\bZ(G)$ (by Lemma
\ref{lem:restriction-center}), and $X$-invariant (by (\ref{eq:2})
and the uniqueness of semisimple character). Moreover,
\[\lev(\chi_{(s_i)})=\log_2(\ord(s_i)_2)=\alpha-i.\] The proof is
complete.
\end{proof}

Theorem \ref{thm:continuity2} is now completely proved, by
Propositions \ref{prop:linhtinh}, \ref{prop:SL}, and \ref{prop:E6}.


\section{A $p$-local lower bound for
$|\Irr_{p'}(G)|$}\label{sec:lower-bound}

In this section we discuss Conjecture \ref{conj:Irrp'-bound} and
prove it for $p=2$.

As already mentioned in the introduction, Conjecture
\ref{conj:Irrp'-bound} arises from three different pieces: the
continuity property Conjecture \ref{conj:continuity1},
Navarro-Tiep's conjecture on fields of values of $p'$-degree
characters, and Hung-Malle-Mar\'{o}ti's bound for the number of
almost $p$-rational irreducible characters. For reader's
convenience, we recall the latter two here.

\begin{conjecture}[\cite{Navarro-Tiep21}]\label{conj:Navarro-Tiep}
Let $\chi\in\Irr_{p'}(G)$ with $\lev_p(\chi)=\alpha\in\ZZ^{\geq 0}$.
Then $p$ does not divide $[\QQ_{p^\alpha}:(\QQ(\chi)\cap
\QQ_{p^\alpha})]$.
\end{conjecture}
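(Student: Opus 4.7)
First, I would exploit the equivalent reformulation already highlighted in the paper: Conjecture \ref{conj:Navarro-Tiep} asserts precisely that the $p$-rationality level of $\chi$ is achieved at some $p$-element of $G$. The equivalence is a quick Galois-theoretic exercise on $\QQ_{p^\alpha}/\QQ$: the subfields $F\subseteq \QQ_{p^\alpha}$ with $[\QQ_{p^\alpha}:F]$ coprime to $p$ are precisely those not contained in $\QQ_{p^{\alpha-1}}$ (for $p$ odd the Galois group is cyclic so the unique subgroup of order $p$ fixes $\QQ_{p^{\alpha-1}}$; $p=2$ needs a small separate check using the $(\ZZ/2)\times \ZZ/2^{\alpha-2}$ structure), and $\QQ(\chi(g):g\in P)$ sits inside $\QQ(\chi)\cap\QQ_{p^\alpha}$. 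I will work with the $p$-element reformulation throughout, as it is more amenable to character-theoretic reasoning.

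Second, I would follow the now-standard template for McKay--Navarro type assertions and reduce to the case of an almost quasisimple group by Clifford-theoretic arguments, in the spirit of the reductions in \cite{Navarro-Tiep19} and Theorem \ref{thm:reduction} of the present paper. The key point is that the property ``level achieved at a $p$-element'' is preserved under extensions by $p$-groups and under quotients by $p'$-normal subgroups, using Lemmas \ref{lem:1} and \ref{lem:canonical extension} together with the nonvanishing of $p'$-degree characters at $p$-elements invoked in Section \ref{sec:3}. Once the reduction is in place, the CFSG will take over. For sporadic, alternating, and defining-characteristic Lie-type simple groups the statement is essentially trivial: as seen in the proposition just after Theorem \ref{thm:reduction}, almost all $p'$-degree characters there are almost $p$-rational, so $\alpha\le 1$ and there is nothing to check at level $\ge 2$.

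Third, the substantial case is a simple group of Lie type $G=\bG^F$ in characteristic $r\neq p$. Here I would use Theorem \ref{thm:technical}: when $\bC_{\bG^\ast}(s)$ is a Levi subgroup $\bL^\ast$ and $p'$-degree unipotent characters of $\bL^F$ have bounded level, the level of $\chi\in\EC(G,s)$ equals $\log_p\ord(s_p)$. Under \cite[Proposition 8.26]{Cabanes-book}, $\chi$ is an explicit sign times $\mathbf{R}_\bL^\bG(\widehat s\cdot \lambda)$ with $\lambda$ a $p$-rational unipotent character of $\bL^F$ and $\widehat s$ the linear character of $\bL^F$ attached to $s$. The linear character $\widehat s$ obviously achieves its $p$-rationality level on a $p$-element $t$ of a maximal torus $\bT^F\subseteq \bL^F$ dual to $\bZ(\bL^\ast)^{F^\ast}$; the Deligne--Lusztig character formula, specialized at the semisimple $p$-element $t$, reduces $\chi(t)$ to an integer combination of Green function values at $1$ times $\widehat s(t)\lambda(1)$. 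Since Green-function values and $\lambda(1)$ are $p$-rational, one concludes that $\chi(t)$ generates a field of level exactly $\log_p\ord(s_p)=\alpha$, producing the $p$-element that realizes the level.

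The main obstacle, as in Section \ref{sec:linear-unitary}, lies in those Lie types where $\bZ(\bG)$ is disconnected and $\bC_{\bG^\ast}(s)$ fails to be Levi, notably types $A_{n-1}$ and $E_6$. There one must descend from a regular embedding $G\hookrightarrow \widetilde G$, controlling the Clifford correspondence between $\Irr_{p'}(G)$ and $\Irr_{p'}(\widetilde G)$ along the cyclic quotient $\widetilde G/G\bZ(\widetilde G)$, and check that the $p$-element witnessing the level upstairs can be chosen in (a conjugate of) $G$. For $p=2$ this was carried out via the Giannelli--Kleshchev--Navarro--Tiep parametrization \cite{GKNT17} and the explicit analysis in \cite{Hung22}; extending the argument to odd $p$ will likely require an analogous combinatorial parameterization of odd-$p$ $p'$-degree characters and a careful study of the action of $\widetilde G/G$ on them, which I expect to be the real technical heart of a full proof.
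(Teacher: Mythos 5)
You are attempting to prove something the paper itself never proves: Conjecture \ref{conj:Navarro-Tiep} is quoted verbatim from \cite{Navarro-Tiep21} and is open in general; the paper only invokes it, and for $p=2$ it relies on \cite[Theorem A1]{Navarro-Tiep21}, which is proved there by different means. So there is no proof in the paper to compare against, and the question is whether your sketch could stand on its own; it cannot, and the gaps are concrete.

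First, your opening ``equivalence'' is false. The statement $p\nmid[\QQ_{p^\alpha}:\QQ(\chi)\cap\QQ_{p^\alpha}]$ concerns the full field of values of $\chi$, whereas ``the level is achieved at a $p$-element'' is the rephrasing of the different and strictly stronger \cite[Conjecture C]{Navarro-Tiep21}, i.e.\ Conjecture \ref{conj:achieved} of this paper, which concerns the field generated by the values $\chi(g)$ for $g\in P$. Since $\QQ(\chi_P)\subseteq\QQ(\chi)\cap\QQ_{p^\alpha}$, the $p$-element statement implies the one you must prove, but not conversely; you silently replace the target by a stronger conjecture that is itself open for odd $p$ (the cited \cite{Hung22} is in preparation and treats only certain classical groups). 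Second, the reduction to almost quasisimple groups of the ``achieved at a $p$-element'' property is asserted, not proved: Lemma \ref{lem:1}(iii) and Lemma \ref{lem:canonical extension} control the level of a character and of constituents only by inequalities, and nothing in the paper shows that a witnessing $p$-element upstairs can be transported through the relevant Clifford correspondences. Third, in the Lie-type step the character formula for $\pm\mathbf{R}_\bL^\bG(\widehat{s}\lambda)$ at a semisimple $p$-element $t$ is a sum over those $G$-conjugates $t^x$ lying in $\bL^F$ of two-variable Green function values times $\widehat{s}(t^x)\lambda(t^x)$; the root-of-unity factors $\widehat{s}(t^x)$ vary with $x$, and cancellation can lower the level of the single value $\chi(t)$, so your conclusion that $\chi(t)$ generates a field of level exactly $\log_p\ord(s_p)$ does not follow. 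Theorem \ref{thm:technical} bounds $\lev(\chi)$ as a character, which is a weaker assertion than locating a $p$-element where the level is attained; that distinction is exactly why Conjecture \ref{conj:achieved}, and hence your route to Conjecture \ref{conj:Navarro-Tiep}, remains open.
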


We note that Conjecture \ref{conj:Navarro-Tiep} is trivial when
$\alpha\leq 1$. Also, when $\alpha\geq 1$, its conclusion implies
that $[\QQ(\chi):\QQ]$ is divisible by $p^{\alpha-1}$, which in turn
implies that $G$ has at least $p^{\alpha-1}$ different irreducible
characters of the same degree and field of values as $\chi$. This
bound is significant when $\alpha\geq 2$ but not so when $\alpha=1$.
The following provides a bound for characters of levels $0$ and $1$.

\begin{theorem}[\cite{Hung-Malle-Maroti}]   \label{theorem-p'-degree}
 Let $G$ be a finite group, $p$ a prime dividing the order of $G$ and~$P$ a
 Sylow $p$-subgroup of $G$. Then the number of almost
$p$-rational irreducible characters of $p'$-degree of $G$ is at
least $2\sqrt{p-1}$.
 Moreover, the following are equivalent:
 \begin{itemize}
  \item[(i)] the number of almost
$p$-rational irreducible characters of $p'$-degree of $G$ is
$2\sqrt{p-1}$;
  \item[(ii)] the number of almost
$p$-rational irreducible characters of $p'$-degree of $\bN_G(P)$ is
$2\sqrt{p-1}$;
  \item[(iii)] $P$ is cyclic and $\bN_G(P)$ is isomorphic to the Frobenius
   group $P\rtimes C_{\sqrt{p-1}}$.
 \end{itemize}
\end{theorem}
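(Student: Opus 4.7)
The plan is to refine the strategy of Malle--Mar\'{o}ti~\cite{Malle-Maroti} so that $p$-rationality level is tracked throughout, so that the same lower bound $2\sqrt{p-1}$ emerges when the count is restricted to almost $p$-rational characters. The argument proceeds in three stages: a reduction to $\bN_G(P)$, a character-counting computation when $P\nor G$, and a final appeal to \cite{Malle-Maroti} applied to a quotient group.

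For the reduction, observe that for $p$ odd an irreducible $p'$-degree character is almost $p$-rational precisely when it is fixed by $\sigma_1$, while for $p=2$ it is the same as being $p$-rational (since $\QQ_{2m}=\QQ_m$ for odd $m$). The first step is to show that the number of almost $p$-rational characters in $\Irr_{p'}(G)$ equals the analogous number for $\bN_G(P)$. For $p=2$, this follows from the McKay conjecture of Malle--Sp\"{a}th~\cite{MS16} combined with Isaacs--Navarro-type refinements for rational characters. For odd $p$, in the absence of the full McKay--Navarro bijection, I would proceed inductively on $|G|$: take a minimal normal subgroup $N$ of $G$, split into cases according to whether $p\mid|N|$, and use Lemma~\ref{lem:1} to propagate the count through the Clifford correspondence, reducing eventually to the case $P\nor G$.

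Assume now $G=P\rtimes H$ with $H$ a $p'$-group. By Clifford theory every $\chi\in\Irr_{p'}(G)$ lies over a linear character $\theta\in\Irr(P/P')$ and is of the form $\Ind_{G_\theta}^G(\widehat\theta\,\psi)$, where $\widehat\theta$ is the canonical extension provided by Lemma~\ref{lem:canonical extension} and $\psi\in\Irr(H_\theta)$. Since $\psi$ is $p$-rational and $[G:G_\theta]$ is coprime to $p$, Lemma~\ref{lem:1} yields $\lev(\chi)=\lev(\theta)$; hence $\chi$ is almost $p$-rational precisely when $\theta$ has order at most $p$, equivalently, when $\theta$ factors through the elementary abelian quotient $V:=P/P^pP'$. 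The number of almost $p$-rational $p'$-degree characters of $G$ therefore equals
\[
\sum_{[\theta]\in\Irr(V)/H}|\Irr(H_\theta)| \;=\; k(V\rtimes H) \;=\; |\Irr_{p'}(V\rtimes H)|,
\]
the last equality because every character of $V\rtimes H$ already has $p'$-degree ($V$ is abelian and $H$ a $p'$-group). Since $|V|\geq p$, the Malle--Mar\'{o}ti bound applied to $V\rtimes H$ gives $|\Irr_{p'}(V\rtimes H)|\geq 2\sqrt{p-1}$, with equality iff $V\rtimes H\cong C_p\rtimes C_{\sqrt{p-1}}$ is a Frobenius group with faithful cyclic complement. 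Transporting back via the reduction yields the lower bound for $G$ and the equivalence of (i) and (ii). In the equality case, $V\cong C_p$ forces $P$ cyclic by the Burnside basis theorem, and the faithful action of $H\cong C_{\sqrt{p-1}}$ on $V=P/P^pP'$ lifts to a faithful action on $P$, producing the Frobenius structure required in (iii).

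The principal obstacle is the reduction step for odd primes: extracting an almost-$p$-rational McKay-type correspondence without invoking the full McKay--Navarro conjecture. One must handle the level-$0$ case separately from the level-$1$ case, since Lemma~\ref{lem:1}(iii) only bites at positive level; for level $0$ the trivial character can be isolated and the remaining rational characters compared using Isaacs--Navarro-style tools. A secondary delicate point is verifying that in the Malle--Mar\'{o}ti extremal configuration \emph{all} the $p'$-degree characters are in fact almost $p$-rational, which is needed to close the loop (iii)$\Rightarrow$(i); here cyclicity of $P$ makes $V=P$ when $P=C_p$, so every linear character of $P$ already factors through $V$ and the verification is essentially automatic.
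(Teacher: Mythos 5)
The heart of your argument is the first stage: the claim that the number of almost $p$-rational characters in $\Irr_{p'}(G)$ equals the corresponding number for $\bN_G(P)$, after which the normal-Sylow computation and the appeal to \cite{Malle-Maroti} would indeed finish the job. But this equality of counts is not an available tool: it is precisely the consequence of the McKay--Navarro (Galois--McKay) conjecture for the Galois subgroup cutting out almost $p$-rationality, and for odd $p$ it is open. Even for $p=2$ it does not follow from Malle--Sp\"{a}th \cite{MS16} ``combined with Isaacs--Navarro-type refinements''; one needs the full Galois-equivariant statement, which is a separate and much deeper theorem. Your proposed inductive justification for odd $p$ (minimal normal subgroup, Lemma~\ref{lem:1}, Clifford correspondence, reduce to $P\nor G$) runs into exactly the obstruction that blocks any elementary proof of the McKay conjecture itself: when the minimal normal subgroup is a product of nonabelian simple groups of order divisible by $p$, Clifford theory does not transfer the count to a proper subgroup or quotient without classification-level information about the simple factors (invariance, extendibility, and rationality of their $p'$-degree characters). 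Tellingly, the theorem you are proving only asserts the equivalence of (i) and (ii) at the extremal value $2\sqrt{p-1}$; if a general equality of counts were known, that equivalence would be vacuous and would not be stated. The actual proof in \cite{Hung-Malle-Maroti} is a CFSG-based reduction-and-verification argument, not a transfer through an equivariant McKay bijection.

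The remaining stages are essentially sound, with minor caveats. In the case $G=P\rtimes H$ (split by Schur--Zassenhaus), your identification of the almost $p$-rational $p'$-degree characters with those lying over $\theta\in\Irr(P/P')$ of order dividing $p$ is correct: Lemma~\ref{lem:1}(ii) gives $\lev(\chi)=\lev(\theta)$ whenever either level is at least $2$, which is exactly what is needed, and the Clifford/Gallagher count
\[
\sum_{[\theta]\in\Irr(V)/H}|\Irr(H_\theta)|=k(V\rtimes H),\qquad V=P/P^pP',
\]
is valid, as is the passage from faithfulness on the Frattini quotient to a fixed-point-free action on a cyclic $P$ in the equality analysis. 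Two details deserve explicit care: you should check that \cite{Malle-Maroti} actually characterizes the equality case in the form you use (for the $p$-solvable group $V\rtimes H$ this is fine, but quote the precise statement), and the direction (iii)$\Rightarrow$(i) again leans on the unproved transfer from $\bN_G(P)$ back to $G$, so it cannot be closed without the missing reduction. As it stands, the proposal proves the theorem only in the case $P\nor G$.
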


\begin{theorem}\label{thm:bound}
Conjecture \ref{conj:Irrp'-bound} follows from Conjectures
\ref{conj:continuity1} and \ref{conj:Navarro-Tiep}.
\end{theorem}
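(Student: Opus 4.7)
The strategy is to partition $\Irr_{p'}(G)$ by $p$-rationality level and bound each stratum separately. Set $\beta := \log_p \exp(P/P')$. The ``if'' direction of Conjecture~\ref{conj:Isaacs-Navarro}, known for all $p$ by \cite[Theorem~B]{Navarro-Tiep19}, forces $\lev(\chi) \leq \beta$ for every $\chi \in \Irr_{p'}(G)$. If $\beta \leq 1$ the bound collapses to $|\Irr_{p'}(G)| \geq 2\sqrt{p-1}$, which is exactly Theorem~\ref{theorem-p'-degree}. So I assume $\beta \geq 2$, and take as input that some $p'$-degree character of $G$ has level exactly $\beta$ (the ``only if'' direction of Conjecture~\ref{conj:Isaacs-Navarro}, unconditional for $p=2$ by Theorem~\ref{thm:Malle-Navarro-Tiep19} and reduced to almost quasisimple groups for odd $p$). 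Conjecture~\ref{conj:continuity1} then supplies, for every integer $k$ with $2 \leq k \leq \beta$, a $p'$-degree irreducible character $\chi_k$ of $G$ of level exactly $k$.

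Next I enlarge each level stratum via Galois orbits. For each $k \geq 2$, Conjecture~\ref{conj:Navarro-Tiep} asserts that $[\QQ_{p^k} : \QQ(\chi_k) \cap \QQ_{p^k}]$ is coprime to $p$. Since $[\QQ_{p^k} : \QQ] = p^{k-1}(p-1)$, this forces $p^{k-1} \mid [\QQ(\chi_k) \cap \QQ_{p^k} : \QQ]$ and hence $p^{k-1} \mid [\QQ(\chi_k):\QQ]$. Consequently the $\Gal(\QQ_{|G|}/\QQ)$-orbit of $\chi_k$ has size at least $p^{k-1}$, and every member of this orbit is a $p'$-degree irreducible character of $G$ of the same level $k$. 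Characters of distinct levels being distinct, summing over $k$ yields at least
\[
\sum_{k=2}^{\beta} p^{k-1} \;=\; \frac{p^\beta - p}{p-1}
\]
$p'$-degree characters of level $\geq 2$.

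The final contribution comes from Theorem~\ref{theorem-p'-degree}, which furnishes at least $2\sqrt{p-1}$ almost $p$-rational $p'$-degree characters (levels $0$ and $1$); these are disjoint from those above. Adding,
\[
|\Irr_{p'}(G)| \;\geq\; 2\sqrt{p-1} + \frac{p^\beta - p}{p-1} \;=\; \frac{\exp(P/P') - 1}{p-1} + 2\sqrt{p-1} - 1,
\]
as desired. The hard part of the plan is the initial anchor, exhibiting a character of level $\beta$: without it, the continuity of Conjecture~\ref{conj:continuity1} has nothing to propagate downward from and the Galois-orbit lower bound from Conjecture~\ref{conj:Navarro-Tiep} is silent below the top level. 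This is precisely why Theorem~\ref{thm:Irr2'-bound} can be asserted unconditionally only for $p=2$, where the anchor is guaranteed by Theorem~\ref{thm:Malle-Navarro-Tiep19}.
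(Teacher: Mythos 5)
Your decomposition is exactly the paper's: levels $0$ and $1$ are handled by Theorem \ref{theorem-p'-degree}, an anchor at the top level plus Conjecture \ref{conj:continuity1} populates every level from $2$ to $\beta=\log_p\exp(P/P')$, and Conjecture \ref{conj:Navarro-Tiep} forces each such level to contain a full Galois orbit of size at least $p^{k-1}$; the final arithmetic is correct. The genuine problem is that you have swapped the two directions of Conjecture \ref{conj:Isaacs-Navarro}, and as a result you make the anchor conditional when it is not. All you need is that $G$ has \emph{some} $p'$-degree character of level at least $\beta$ (continuity then yields characters of every level from $2$ to $\beta$; neither exactness at $\beta$ nor the upper bound $\lev(\chi)\le\beta$ for all $\chi$ is ever used in your count). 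That existence statement is the contrapositive of the ``if'' direction of Isaacs--Navarro, i.e.\ of \cite[Theorem B]{Navarro-Tiep19}, which is an unconditional theorem for every prime $p$; it is the ``only if'' direction (the one reduced to almost quasisimple groups) that would give the upper bound you mistakenly attribute to Theorem B. As written, your argument invokes for odd $p$ the unproven ``only if'' direction as an extra hypothesis, so it does not establish the stated implication from Conjectures \ref{conj:continuity1} and \ref{conj:Navarro-Tiep} alone; replacing that invocation by \cite[Theorem B]{Navarro-Tiep19}, as the paper does, repairs the proof with no other change.

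Relatedly, your closing diagnosis is off: Theorem \ref{thm:Irr2'-bound} is asserted only for $p=2$ because the two conjectural inputs are proved there only for $p=2$ (Theorem \ref{thm:continuity} for the continuity, and \cite[Theorem A1]{Navarro-Tiep21} for Conjecture \ref{conj:Navarro-Tiep}), not because the anchor is unavailable for odd $p$ --- it is available for all $p$.
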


\begin{proof}
Assume that Conjectures \ref{conj:continuity1} and
\ref{conj:Navarro-Tiep} hold true for $G$ and $p$. We aim to
establish the inequality
\[|\Irr_{p'}(G)|\geq
\frac{\exp(P/P')-1}{p-1}+2\sqrt{p-1}-1,\] provided that $|G|$ is
divisible by $p$. As explained in Remark \ref{remark:1}, by Theorem
\ref{theorem-p'-degree}, this is reduced to showing
\[
|\{\chi\in\Irr_{p'}(G): \lev(\chi)\geq 2\}|\geq
\frac{\exp(P/P')-1}{p-1}-1.
\]
We may and will assume that $\exp(P/P')\geq p^2$. The wanted
inequality is further reduced to
\[
|\{\chi\in\Irr_{p'}(G): \lev(\chi)=\beta\}|\geq p^{\beta-1},
\]
for every $2\leq \beta\leq \log_p(\exp(P/P'))$.

By \cite[Theorem B]{Navarro-Tiep19}, there exists a $p'$-degree
irreducible character of $G$ of $p$-rationality level at least
$\log_p(\exp(P/P')\geq 2$. The continuity property from Conjecture
\ref{conj:continuity1} then implies that $G$ possesses $p'$-degree
irreducible characters of all levels from $2$ to
$\log_p(\exp(P/P'))$.

Let $\chi\in\Irr_{p'}(G)$ with $\lev(\chi)=\beta$. By Conjecture
\ref{conj:Navarro-Tiep}, $p$ does not divide
$[\QQ_{p^\beta}:(\QQ(\chi)\cap \QQ_{p^\beta})]$, implying that
$[\QQ(\chi):\QQ]$ is divisible by $p^{\beta-1}$. The number of
Galois conjugates of $\chi$ is $[\QQ(\chi):\QQ]$, by \cite[Theorem
3.1]{Navarro18}, and therefore is at least $p^{\beta-1}$. Also, each
Galois conjugate of $\chi$ has the same degree and field of values
as $\chi$. We deduce that the number of $p'$-degree irreducible
characters of $p$-rationality level $\beta$ is at least
$p^{\beta-1}$, as desired.
\end{proof}

\begin{theorem}\label{thm:Irr2'-bound-repeated}
Let $G$ be finite group and $P\in\Syl_2(G)$. Then
\[|\Irr_{2'}(G)|\geq \exp(P/P').\] Moreover, the equality occurs if
and only if $P$ is cyclic and self-normalizing.
\end{theorem}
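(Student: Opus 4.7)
The plan is to stratify $\Irr_{2'}(G)$ by $2$-rationality level, bound each stratum below via a Galois-orbit count, then sum. Set $\alpha := \log_2(\exp(P/P'))$. The cases $\alpha \leq 1$ are quick: if $\alpha = 0$ then $P = 1$, $G$ has odd order, and $|\Irr_{2'}(G)| = |\Irr(G)| \geq 1 = \exp(P/P')$ with equality forcing $G = 1$; if $\alpha = 1$, Theorem \ref{thm:Malle-Navarro-Tiep19}(i) confines every $2'$-degree character to level $\leq 1$ (almost $2$-rational), and Theorem \ref{theorem-p'-degree} then gives $|\Irr_{2'}(G)| \geq 2\sqrt{p-1} = 2 = \exp(P/P')$, with its equality clause forcing $P \cong C_2$ and $\bN_G(P) = P$.

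For $\alpha \geq 2$, I combine three inputs. Theorem \ref{theorem-p'-degree} provides at least $2$ almost $2$-rational $2'$-degree characters. Theorem \ref{thm:continuity} provides at least one $2'$-degree character at every level $\beta$ with $2 \leq \beta \leq \alpha$. For any such $\chi$ of level $\beta$, the Galois group $\Gal(\QQ^{ab}/\QQ)$ permutes $\Irr_{2'}(G)$ preserving degree, field of values, and hence level, so the orbit of $\chi$ supplies $[\QQ(\chi):\QQ]$ distinct characters of level $\beta$. I would show $[\QQ(\chi):\QQ] \geq 2^{\beta-1}$ by establishing Conjecture \ref{conj:Navarro-Tiep} for $p=2$, using the main result of \cite{ILNT19} (forcing $i \in \QQ(\chi)$) together with the observation that $\Gal(\QQ_{2^\beta}/\QQ(i)) \cong C_{2^{\beta-2}}$ is cyclic: the subfields of $\QQ_{2^\beta}$ containing $\QQ(i)$ form a chain with respective conductors $4, 8, \ldots, 2^\beta$, and conductor matching then forces $\QQ_{2^\beta} \subseteq \QQ(\chi)$. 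Since characters at distinct levels lie in disjoint orbits,
\[
|\Irr_{2'}(G)| \;\geq\; 2 \;+\; \sum_{\beta=2}^{\alpha} 2^{\beta-1} \;=\; 2 + (2^\alpha - 2) \;=\; 2^\alpha \;=\; \exp(P/P').
\]

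For the equality clause, tightness in the displayed bound forces tightness in Theorem \ref{theorem-p'-degree} (exactly $2$ almost $2$-rational $2'$-degree characters), whose equality clause yields $P$ cyclic and $\bN_G(P) \cong P \rtimes C_1 = P$, i.e., $P$ self-normalizing. Conversely, if $P$ is cyclic and self-normalizing, the McKay conjecture (known for $p = 2$ by Malle and Sp\"ath \cite{MS16}) together with the fact that every irreducible character of the abelian $2$-group $P$ is linear of odd degree gives
\[
|\Irr_{2'}(G)| \;=\; |\Irr_{2'}(\bN_G(P))| \;=\; |\Irr(P)| \;=\; |P| \;=\; \exp(P/P').
\]
The principal technical obstacle is the Galois bound $[\QQ(\chi):\QQ] \geq 2^{\beta-1}$; the naive decomposition of a cyclotomic subfield into its $2$-part and its odd part is not valid in general (e.g.\ $\QQ(\sqrt{3}) \subset \QQ_{12}$ with trivial intersections with both $\QQ_4$ and $\QQ_3$), so the ILNT input is essential to rule out such pathological configurations.
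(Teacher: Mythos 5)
Your overall architecture is the paper's: stratify $\Irr_{2'}(G)$ by $2$-rationality level, get two almost $2$-rational characters from Theorem \ref{theorem-p'-degree}, one character at each level $2\le\beta\le\alpha$ from Theorem \ref{thm:continuity}, inflate each to a Galois orbit of size $[\QQ(\chi):\QQ]\ge 2^{\beta-1}$, sum to $2^\alpha$, and settle equality via the equality clause of Theorem \ref{theorem-p'-degree} together with the McKay count for cyclic Sylow subgroups (the paper quotes Dade \cite{Dade96}, you quote \cite{MS16}; both are fine). The genuine gap is in the one step that carries all the weight: the claim $\QQ_{2^\beta}\subseteq\QQ(\chi)$, i.e.\ Conjecture \ref{conj:Navarro-Tiep} for $p=2$. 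You propose to deduce it from \cite{ILNT19} plus ``conductor matching'' along the chain of subfields of $\QQ_{2^\beta}$ over $\QQ(i)$, but that inference is false as pure field theory: knowing $i\in\QQ(\chi)$ and that the $2$-part of the conductor of $\QQ(\chi)$ equals $2^\beta$ does not determine the conductor of $\QQ(\chi)\cap\QQ_{2^\beta}$. For instance $K=\QQ(i,\sqrt{10})\subseteq\QQ_{40}$ contains $i$ and has conductor $40$, whose $2$-part is $8$, yet $K\cap\QQ_{8}=\QQ(i)$, so the chain argument stalls at $\QQ_4$. Your closing caveat says the ILNT input rules out such ``pathological configurations,'' but it does not: the counterexample field already contains $i$. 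Excluding such fields as fields of values of odd-degree characters is exactly the nontrivial content of Navarro--Tiep's result, proved in \cite[Theorem A1]{Navarro-Tiep21} (for $p=2$ its conclusion is precisely $\QQ_{2^{\lev(\chi)}}\subseteq\QQ(\chi)$), and this is what the paper cites, routing the inequality through Theorem \ref{thm:bound}. So either quote that theorem or supply a genuinely character-theoretic argument; the field-theoretic shortcut cannot work.

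Apart from this, the bookkeeping is sound: Galois conjugates have the same degree, field of values and hence level, and their number is $[\QQ(\chi):\QQ]$ (\cite[Theorem 3.1]{Navarro18}); distinct levels give disjoint orbits; the small cases $\alpha\le 1$ and the equality analysis (tightness forces exactly two almost $2$-rational odd-degree characters, whence $P$ cyclic and $\bN_G(P)=P$ by Theorem \ref{theorem-p'-degree}) are handled as in the paper.
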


\begin{proof}
Conjecture \ref{conj:Navarro-Tiep} has been verified for $p=2$ in
\cite[Theorem A1]{Navarro-Tiep21}. Therefore, the inequality follows
from Theorem \ref{thm:continuity} and Theorem \ref{thm:bound}.

The if direction of the second statement of the theorem is clear, by
using the cyclic-Sylow case of the McKay conjecture \cite{Dade96}.
So assume that $|\Irr_{2'}(G)|= \exp(P/P')$. Following the proof of
Theorem \ref{thm:bound}, we see that $G$ then has precisely two
almost $2$-rational odd-degree irreducible characters. By Theorem
\ref{theorem-p'-degree}, this occurs if and only if $P$ is cyclic
and $\bN_G(P)=P$, as stated.
\end{proof}

We conclude by a remark that, as the McKay-Navarro conjecture admits
a block-wise version (see the discussion after Conjecture 9.13 in
\cite{Navarro18}), Conjectures \ref{conj:continuity1} and
\ref{conj:Irrp'-bound} are expected to hold for the principal block
(and perhaps for all blocks of maximal defect as well). That is, if
the principal $p$-block $B$ of a finite group $G$ contains a
height-zero irreducible character of $p$-rationality level
$\alpha\geq 2$, then $B$ contains similar characters of all levels
from $2$ to $\alpha$. Moreover, the number of height-zero characters
in $B$ is at least $\frac{\exp(P/P')-1}{p-1}+2\sqrt{p-1}-1$. We have
decided not to pursue these block-wise versions here.


\begin{thebibliography}{ABCDEF}

\bibitem[Bra63]{Brauer63}
{\sc R. Brauer}, \emph{Representations of finite groups}, Lectures
on Modern Mathematics Wiley, New York, Vol. I (1963), 133--175.

\bibitem[CE04]{Cabanes-book}
{\sc M. Cabanes and M. Enguehard}, \emph{Representation theory of
finite reductive groups}, New Mathematical Monographs \textbf{1},
Cambridge University Press, Cambridge, 2004.

\bibitem[Car81]{Carter81}
{\sc R.\,W. Carter}, Centralizers of semisimple elements in the
finite classical groups, \emph{Proc. London Math. Soc.} \textbf{42}
(1981), 1--41.

\bibitem[Car85]{Carter85}
{\sc R.\,W. Carter}, \emph{Finite groups of Lie type. Conjugacy
classes and complex characters}, Wiley and Sons, New York et al,
1985.


\bibitem[Dad96]{Dade96}
{\sc E.\,C. Dade}, Counting characters in blocks with cyclic defect
groups. I, \emph{J. Algebra} \textbf{186} (1996), 934--969.



\bibitem[DM91]{Digne-Michel91}
{\sc F. Digne and J. Michel}, \emph{Representations of finite groups
of Lie type}, London Mathematical Society Student Texts \textbf{21},
Cambridge University Press, Cambridge, 1991.

\bibitem[DPSS09]{DPSS09}
{\sc S. Dolfi, E. Pacifici, L. Sanus, and P. Spiga}, On the orders
of zeros of irreducible characters, \emph{J. Algebra} \textbf{321}
(2009), 345--352.


\bibitem[FS89]{FS89}
{\sc P. Fong and B. Srinivasan}, The blocks of finite classical
groups, \emph{J. Reine Angew. Math.} \textbf{396} (1989), 122--191.

\bibitem[GM20]{GM20}
{\sc M. Geck and G. Malle}, \emph{The character theory of finite
groups of Lie type: a guided tour}, Cambridge studies in advanced
mathematics \textbf{187}, 2020, 394 pp.

\bibitem[GKNT17]{GKNT17}
{\sc E. Giannelli, A. Kleshchev, G. Navarro, and P.\,H. Tiep},
Restriction of odd degree characters and natural correspondences,
\emph{Int. Math. Res. Not.} \textbf{20} (2017), 6089--6118.

\bibitem[GLS94]{Gorensteinetal}
{\sc D. Gorenstein, R. Lyons, and R. Solomon}, \emph{The
Classification of the Finite Simple Groups}, Math. Surveys Monogr.,
vol. 3, Amer. Math. Soc., Providence, 1994.

\bibitem[Hun]{Hung22}
{\sc N.\,N. Hung} On the $p$-rationality and Sylow restrictions of
$p'$-degree characters, in preparation.

\bibitem[HMM22]{Hung-Malle-Maroti}
{\sc N.\,N. Hung, G. Malle, and A. Mar\'{o}ti}, On almost
$p$-rational characters of $p'$-degree, \emph{Forum Math.}
\textbf{34} (2022), 1475--1496.


\bibitem[Isa76]{Isaacs1}
{\sc I.\,M. Isaacs}, \emph{Character theory of finite groups}, AMS
Chelsea
  Publishing, Providence, Rhode Island, 2006.

\bibitem[Isa08]{Isaacs08}
{\sc I.\,M. Isaacs}, \emph{Finite group theory}, Graduate Studies in
Mathematics, \textbf{92}. American Mathematical Society, Providence,
RI, 2008. xii+350 pp.

\bibitem[ILNT19]{ILNT19}
{\sc I.\,M. Isaacs, M.\,W. Liebeck, G. Navarro, and P.\,H. Tiep},
Fields of values of odd-degree irreducible characters, \emph{Adv.
Math.} \textbf{354} (2019), 106757, 26 pp.

\bibitem[IN01]{Isaacs-Navarro01}
{\sc I.\,M. Isaacs and G. Navarro}, Characters of $p'$-degree of
$p$-solvable groups, \emph{J. Algebra} \textbf{246} (2001),
394--413.

\bibitem[INS12]{Isaacs-navarro-Sangroniz}
{\sc I.\,M. Isaacs, G. Navarro, and J. Sangroniz}, $p$-Groups having
few almost-rational irreducible characters, \emph{Israel J. Math.}
\textbf{189} (2012), 65--96.

\bibitem[Lus02]{Lusztig02}
{\sc G. Lusztig}, Rationality properties of unipotent
representations, \emph{J. Algebra} \textbf{258} (2002), 1--22.

\bibitem[KT09]{Kleshchev-Tiep09}
{\sc A.\,S. Kleshchev and P.\,H. Tiep}, Representations of finite
special linear groups in non-defining characteristic, \emph{Adv.
Math.} \textbf{220} (2009), 478--504.

\bibitem[Mal07]{Malle07}
{\sc G. Malle}, Height $0$ characters of finite groups of Lie type,
\emph{Represent. Theory} \textbf{11} (2007), 192--220.

\bibitem[Mal19]{Malle19}
{\sc G. Malle}, The Navarro-Tiep Galois conjecture for $p=2$,
\emph{Arch. Math. (Basel)} \textbf{112} (2019), 449--457.

\bibitem[MM16]{Malle-Maroti}
{\sc G. Malle and A. Mar\'{o}ti}, On the number of $p'$-degree
characters in a finite group, \emph{Int. Math. Res. Not.}
\textbf{20} (2016), 6118--6132.

\bibitem[MS16]{MS16}
{\sc G. Malle and B. Sp\"{a}th}, Characters of odd degree,
\emph{Ann. of Math.} \textbf{184} (2016), 869--908.

\bibitem[McK72]{McKay}
{\sc J. McKay}, Irreducible representations of odd degree, \emph{J.
Algebra}
  \textbf{20} (1972), 416--418.

\bibitem[Nav04]{Navarro04}
{\sc G. Navarro}, The McKay conjecture and Galois automorphisms,
\emph{Ann. of Math.} \textbf{160} (2004), 1129--1140.

\bibitem[Nav18]{Navarro18}
{\sc G. Navarro}, \emph{Character theory and the McKay conjecture},
Cambridge Studies
  in Advanced Mathematics \textbf{175}, Cambridge University Press, Cambridge,
  2018.



\bibitem[NT13]{Navarro-Tiep13}
{\sc G. Navarro and P.\,H. Tiep}, Characters of relative $p'$-degree
over normal subgroups, \emph{Ann. of Math.} \textbf{178} (2013),
1135--1171.

\bibitem[NT16]{Navarro-Tiep16}
{\sc G. Navarro and P.\,H. Tiep}, Real groups and Sylow
$2$-subgroups, \emph{Adv. Math.} \textbf{299} (2016), 331-360.

\bibitem[NT19]{Navarro-Tiep19}
{\sc G. Navarro and P.\,H. Tiep}, Sylow subgroups, exponents, and
character values,
  \emph{Trans. Amer. Math. Soc.} \textbf{372} (2019), 4263--4291.

\bibitem[NT21]{Navarro-Tiep21}
{\sc G. Navarro and P.\,H. Tiep}, The fields of values of characters
of degree not divisible by $p$, \emph{Forum Math. Pi} \textbf{9}
(2021), 1--28.

\bibitem[NTT07]{Navarro-Tiep-Turull}
{\sc G. Navarro, P.\,H. Tiep, and A. Turull}, $p$-Rational
characters and self-normalizing Sylow $p$-subgroups,
\emph{Represent. Theory} \textbf{11} (2007), 84--94.

\bibitem[ST18]{ST18}
{\sc A.\,A. Schaeffer Fry and J. Taylor}, On self-normalising Sylow
$2$-subgroups in type $A$, \emph{J. Lie Theory} \textbf{28} (2018),
139--168.

\bibitem[SV20]{SV20}
{\sc B. Srinivasan and C.\,R. Vinroot}, Galois group action and
Jordan decomposition of characters of finite reductive groups with
connected center, \emph{J. Algebra} \textbf{558} (2020), 708--727.

\bibitem[Tay18]{Taylor18}
{\sc J. Taylor}, Action of automorphisms on irreducible characters
of symplectic groups, \emph{J. Algebra} \textbf{505} (2018),
211--246.



\end{thebibliography}
\end{document}